\newcommand{\dr}{\textup{dR}}
\newcommand{\hdr}{H_{\dr}}
\newcommand{\ndiv}{\nmid}
\newcommand{\Ga}{\mathbb{G}_a}
\newcommand{\lag}{\Psi}
\DeclareMathOperator{\Lie}{Lie}
\DeclareMathOperator{\chr}{char}
\DeclareMathOperator{\Ann}{Ann}
\DeclareMathOperator{\Pic}{Pic}
\DeclareMathOperator{\Reg}{Reg}
\DeclareMathOperator{\Res}{Res}
  \newcommand{\textcyr}[1]{%
    {\fontencoding{OT2}\fontfamily{wncyr}\fontseries{m}\fontshape{n}%
     \selectfont #1}}
\newcommand{\Sha}{{\mbox{\textcyr{Sh}}}}
\newcommand{\cL}{\mathcal{L}}
\newcommand{\cO}{\mathcal{O}}
\DeclareMathOperator{\Spec}{Spec}
\DeclareMathOperator{\Div}{Div}
\DeclareMathOperator{\res}{res}
\DeclareMathOperator{\sign}{sign}
\newcommand{\into}{\rightarrow}
\DeclareMathOperator{\tors}{tors}
\newcommand{\ra}{\rightarrow}
\newcommand{\lra}{\longrightarrow}
\newcommand{\comment}[1]{}
\newcommand{\Q}{\mathbb{Q}}
\newcommand{\R}{\mathbb{R}}
\newcommand{\C}{\mathbb{C}}
\newcommand{\T}{\mathbb{T}}
\newcommand{\Z}{\mathbb{Z}}
\newcommand{\F}{\mathbb{F}}
\renewcommand{\O}{\mathcal{O}}
\DeclareMathOperator{\disc}{disc}
\DeclareMathOperator{\ord}{ord}
\DeclareMathOperator{\GL}{GL}
\DeclareMathOperator{\Gal}{Gal}
\newcommand{\cB}{\mathcal{B}}
\DeclareMathOperator{\Hom}{Hom}
\theoremstyle{plain}
\newtheorem{theorem}{Theorem}[section]
\newtheorem{proposition}[theorem]{Proposition}
\newtheorem{corollary}[theorem]{Corollary}
\newtheorem{lemma}[theorem]{Lemma}
\newtheorem{conjecture}[theorem]{Conjecture}
\theoremstyle{definition}
\newtheorem{definition}[theorem]{Definition}
\newtheorem{remark}[theorem]{Remark}
\numberwithin{equation}{section}
\numberwithin{figure}{section}
\numberwithin{table}{section}
\newcounter{listnum}
\newtheorem{algorithm}[theorem]{Algorithm}
{\begin{enumerate}\setlength{\itemsep}{0.1ex}}{\end{enumerate}}
\lstdefinelanguage{Sage}[]{Python}
{morekeywords={True,False,sage,singular},
sensitive=true}
\definecolor{lightyellow}{rgb}{1,1,.86}
\definecolor{dblackcolor}{rgb}{0.0,0.0,0.0}
\definecolor{dbluecolor}{rgb}{.01,.02,0.7}
\definecolor{dredcolor}{rgb}{0.8,0,0}
\definecolor{dgraycolor}{rgb}{0.30,0.3,0.30}
\definecolor{graycolor}{rgb}{0.35,0.35,0.35}
  \title[$p$-adic BSD for modular abelian varieties]{A $p$-adic analogue of the conjecture of Birch and
        Swinnerton-Dyer for modular abelian varieties}
 \author{Jennifer S. Balakrishnan}
\address{Jennifer S. Balakrishnan, Mathematical Institute, University of Oxford, Woodstock Road, Oxford OX2 6GG, UK}
\email{balakrishnan@maths.ox.ac.uk}
\author{J. Steffen M\"{u}ller}
\address{J. Steffen M\"{u}ller, Institut f\"ur Mathematik, Carl von Ossietzky
Universit\"{a}t Oldenburg, 26111 Oldenburg, Germany}
\email{jan.steffen.mueller@uni-oldenburg.de}
\author{William A. Stein}
\address{William A. Stein, Department of Mathematics, University of Washington, Seattle, Box 354350
    WA 98195, USA }
\email{wstein@uw.edu}
\date{\today}
\subjclass[2010] {11G40, 11G50,  11G10, 11G18}
\begin{document}
  \maketitle

  \begin{abstract}  Mazur, Tate, and Teitelbaum gave a $p$-adic analogue of the Birch and
Swinnerton-Dyer conjecture for elliptic curves. We provide a generalization of their
conjecture in the good ordinary case to higher dimensional modular abelian varieties over
    the rationals by constructing the $p$-adic $L$-function of a modular abelian variety and showing that it satisfies the appropriate interpolation property. This relies on a careful normalization of the $p$-adic $L$-function, which we achieve by a comparison of periods.  Our generalization agrees with the conjecture of Mazur, Tate, Teitelbaum in dimension 1 and the classical Birch Swinnerton-Dyer conjecture formulated by Tate in rank 0.  We describe the theoretical techniques used to formulate the conjecture and give numerical evidence supporting the conjecture in the case when the modular abelian variety is of dimension 2.   \end{abstract}

      \section{Introduction}
The Birch and Swinnerton-Dyer (BSD) conjecture gives a precise relationship between several arithmetic invariants of an abelian variety $A$ over a number field $K$. As formulated by Tate \cite{tate:bsd}, the conjecture states the following:
\begin{conjecture}[BSD conjecture for abelian varieties]\label{bsdabvar} Let $A$ be an
    abelian variety of dimension $g$ over a number field $K$, and let $A^{\vee}$ be its dual. Then the Mordell-Weil rank $r$ of $A(K)$ is equal to the
    analytic rank $\ord_{s=1}L(A,s)$ of $A$ and  $$\lim_{s \ra 1}(s-1)^{-r}L(A,s) =
    \frac{\Omega_A \cdot
    |\Sha(A/K)| \cdot \Reg(A/K)  \cdot \prod_{v} c_v}{\sqrt{|D_K|}^g\cdot|A(K)_{\tors}|\cdot|A^{\vee}(K)_{\tors}|},$$
    where $D_K$ is the absolute discriminant of $K$, $\Omega_A$ is the real period,
    $\Reg(A/K)$ is the regulator,  $c_v$ is the
    Tamagawa number at a finite place $v$ of $K$, $\Sha(A/K)$ the Shafarevich-Tate
group of $A$ and $A(K)_{\tors}$ is the torsion subgroup of $A(K)$.\end{conjecture}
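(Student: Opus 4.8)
The statement is the full Birch and Swinnerton-Dyer conjecture, which remains open in general; what follows is therefore a description of the strategy by which the known partial cases are established rather than a complete proof. The plan is to separate the two assertions---the rank equality $r = \ord_{s=1}L(A,s)$ and the leading-coefficient formula---and to stratify by the analytic rank $\ord_{s=1}L(A,s)$, since the available tools behave very differently in ranks $0$, $1$, and $\geq 2$. Throughout I would use that the abelian varieties of interest here are of $\GL_2$-type and modular, so that $L(A,s)$ factors as a product of $L$-functions of newforms and hence admits the analytic continuation and functional equation needed even to make sense of the leading term at $s=1$; without modularity this continuation is itself unavailable.

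For analytic rank $0$ I would argue that $L(A,1)\neq 0$ forces both $A(K)$ and $\Sha(A/K)$ to be finite. The key input is an Euler system of Kato type, producing global cohomology classes that annihilate the relevant Selmer group whenever the associated $L$-value is nonzero; this bounds $\Sel_p(A/K)$, giving $\rank A(K)=0$ and finiteness of $\Sha(A/K)[p^\infty]$ for each $p$. For analytic rank $1$ the mechanism is the Gross--Zagier formula, which identifies $L'(A,1)$, up to the period and explicit local factors, with the N\'eron--Tate height of a Heegner point, together with Kolyvagin's Euler system of Heegner points, which bounds $\Sha(A/K)$ and shows the Heegner point generates a finite-index subgroup of $A(K)$. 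Together these yield $\rank A(K)=1$ and the finiteness of $\Sha$.

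To upgrade the rank statements to the precise leading-coefficient formula I would compare $p$-adic valuations on both sides prime by prime. The regulator $\Reg(A)$ is computed from the height pairing on the basis of $A(K)$ furnished above; the Tamagawa numbers $c_v$ and the torsion orders $|A(K)_{\tors}|$ and $|\Adual(K)_{\tors}|$ are explicit local quantities; and the real period $\Omega_A$ is matched to the modular period appearing in the Gross--Zagier and interpolation formulas. The genuinely arithmetic content is then the exact order of $\Sha(A/K)[p^\infty]$, which I would pin down using the relevant Iwasawa Main Conjecture---Kato's divisibility together with the reverse divisibility in the good ordinary case---relating the characteristic ideal of the Selmer group to the $p$-adic $L$-function, which is precisely the object this paper constructs.

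The main obstacle is exactly where the conjecture is open, and it is twofold. First, for analytic rank $\geq 2$ there is currently no construction of rational points: no higher-rank analogue of Heegner points and no Euler system of the required shape, so neither the inequality $\rank A(K)\geq \ord_{s=1}L(A,s)$ nor the finiteness of $\Sha(A/K)$ can presently be established. Second, even in ranks $0$ and $1$ the exact formula demands finiteness of $\Sha(A/K)$ at all primes simultaneously and a complete two-sided Main Conjecture, including the primes of bad or additive reduction and the delicate Euler-factor and period comparisons; for higher-dimensional $A$ carrying nontrivial endomorphisms these $p$-adic and local comparisons are the technical heart that the $p$-adic reformulation developed below is designed to render accessible.
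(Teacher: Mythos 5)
This statement is Conjecture~\ref{bsdabvar}, not a theorem: the paper does not prove it (it cannot --- the conjecture is open), but states it as the classical motivation for the $p$-adic analogue constructed later. You correctly recognized this and, rather than fabricating a proof, gave a survey of the known partial results; that is the right posture, and there is no proof in the paper to compare against. Your outline (Kato's Euler system in analytic rank $0$, Gross--Zagier plus Kolyvagin in rank $1$, Iwasawa Main Conjecture machinery for the leading coefficient, with rank $\ge 2$ wide open) accurately parallels the circle of ideas the paper itself recounts in its introduction for the $p$-adic version, citing Kato, Perrin-Riou, Schneider, and Skinner--Urban. The only caveat worth flagging is that your sketch implicitly restricts to modular abelian varieties over $\Q$ of $\GL_2$-type, whereas the conjecture as stated concerns arbitrary abelian varieties over arbitrary number fields $K$, where even the analytic continuation of $L(A,s)$ --- hence the very statement --- is not known; you acknowledge this, and the paper makes the same restriction when it passes to quotients of $J_1(N)$.
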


Note that this conjecture relies on two assumptions: that the Shafarevich-Tate group
$\Sha$ is finite and that the $L$-series can be analytically continued to $s=1$. An
analytic continuation is known to exist for modular abelian varieties over $\Q$, where an
abelian variety is said to be {\em modular} if it is a quotient of $J_1(N)$ for some level $N$.
In particular, for an elliptic curve $E/\Q$ of rank $r$, the BSD conjecture predicts
\begin{conjecture}[BSD conjecture for elliptic curves]\label{bsdec}Let $E$ be an elliptic
curve over $\Q$. Then the Mordell-Weil rank $r$ of $E(\Q)$ is equal to the
    analytic rank of $E$ and $$\lim_{s \ra 1}(s-1)^{-r}L(E,s)  = \frac{\Omega_E \cdot
|\Sha(E/\Q)| \cdot \Reg(E/\Q) \cdot \prod_{p} c_p}{|E(\Q)_{\tors}|^2}.$$\end{conjecture}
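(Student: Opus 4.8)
The statement is the Birch and Swinnerton-Dyer conjecture for a single elliptic curve $E/\Q$, which is one of the Clay Millennium Prize Problems and remains open in general; accordingly, what follows is not a complete proof but a plan that assembles the deepest available partial results and isolates exactly where they fall short. The natural strategy is to separate the two assertions: the \emph{rank equality} $\rank E(\Q) = \ord_{s=1} L(E,s)$, and the \emph{leading-coefficient formula} for $\lim_{s\to 1}(s-1)^{-r}L(E,s)$. The first step is to reduce everything, prime by prime, to statements about the Selmer groups $\Sel_{p^\infty}(E/\Q)$, since the global invariants $\Sha$, $\Reg$, the Tamagawa numbers $c_p$, and the torsion all appear in the $p$-part of a Selmer-group Euler characteristic.

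For the rank equality when $\ord_{s=1}L(E,s) \le 1$, the plan is to invoke the Gross--Zagier formula together with Kolyvagin's Euler system. Gross--Zagier expresses $L'(E,1)$ as a nonzero multiple of the canonical height of a Heegner point $y_K$ coming from an auxiliary imaginary quadratic field $K$; thus $L'(E,1)\neq 0$ forces $y_K$ to be of infinite order, giving $\rank E(\Q)\ge 1$. Kolyvagin's derivative classes then bound the Selmer group from above, yielding $\rank E(\Q)\le 1$ and, crucially, the finiteness of $\Sha(E/\Q)$ in these cases. Combined with the known analytic behavior (the order of vanishing being $0$ or $1$), this settles the rank part in analytic rank $\le 1$.

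For the leading-coefficient identity I would pass through Iwasawa theory: the one divisibility of the main conjecture due to Kato, together with the reverse divisibility of Skinner--Urban under the usual hypotheses, pins down the $p$-adic valuation of the algebraic $p$-adic $L$-value, and Perrin-Riou's interpolation machinery translates this into the $p$-part of the complex leading term. One then matches the local factors $c_p$ and the regulator $\Reg(E)$ against the arithmetic side, handling the finitely many bad and exceptional primes separately.

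The hard part --- in fact the reason the conjecture is open --- is twofold. First, no construction currently produces enough global points or annihilating cohomology classes when $\ord_{s=1}L(E,s)\ge 2$, so the Gross--Zagier/Kolyvagin mechanism breaks down and the rank equality is genuinely out of reach there. Second, even in the accessible cases the finiteness of $\Sha$ and the exact leading constant are only controlled at primes $p$ satisfying technical hypotheses (ordinary reduction, irreducibility of the mod-$p$ representation, non-anomalous behavior), so assembling the $p$-parts into the single rational identity above would require removing these hypotheses uniformly in $p$, which no present technique accomplishes.
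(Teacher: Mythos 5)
You have correctly recognized that this statement is not a theorem at all: it is Conjecture~\ref{bsdec}, the Birch and Swinnerton-Dyer conjecture for elliptic curves over $\Q$, and the paper offers no proof of it. The only ``derivation'' in the paper is the observation that it is the specialization of Conjecture~\ref{bsdabvar} (Tate's formulation for abelian varieties) to the case $A=E$, $K=\Q$: there $g=1$ and $\sqrt{|D_K|}^g=1$, and the canonical principal polarization identifies $E$ with $E^{\vee}$, so the denominator $|A(K)_{\tors}|\cdot|A^{\vee}(K)_{\tors}|$ collapses to $|E(\Q)_{\tors}|^2$. That formal specialization is the one piece of content attached to this statement in the paper, and it is the one step your write-up does not spell out.

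Your survey of the partial results is accurate as far as it goes: Gross--Zagier plus Kolyvagin does settle the rank equality and finiteness of $\Sha$ when the analytic rank is at most~$1$, and the Iwasawa-theoretic route (Kato's divisibility, Skinner--Urban's converse under hypotheses, Perrin-Riou's comparison) controls the $p$-part of the leading coefficient at suitable primes --- indeed the paper cites exactly these works (\cite{kato:secret}, \cite{skinner-urban}, \cite{perrin-riou:elliptiques}) in its discussion of the $p$-adic analogue. You also correctly locate the two genuine obstructions: analytic rank $\ge 2$, and uniformity in $p$ of the technical hypotheses. So there is no error to repair; your proposal is an honest account of why no proof exists. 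The only mismatch with the paper is one of framing: the paper treats this statement purely as a conjecture to be generalized ($p$-adically, to modular abelian varieties), not as a target of proof, so the appropriate ``proof'' in context is simply the reduction from Conjecture~\ref{bsdabvar} described above, followed by the admission that the conjecture itself is open.
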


In 1986, Mazur, Tate, and Teitelbaum \cite{mtt} gave a $p$-adic analogue of this
conjecture for an elliptic curve $E$ over the rationals and a prime $p$ of good ordinary
or multiplicative reduction.  Much work has been done towards a proof of the conjecture,
and more is known about the $p$-adic conjecture than its classical counterpart.  We give a
brief overview of the circle of ideas involved; see also the recent work of Stein and Wuthrich \cite{stein-wuthrich:shark}.
For simplicity, we assume that $p>2$.

Let $_{\infty}G$ denote the Galois group $\Gal(\Q(\mu_{p^{\infty}})/\Q)$, where
$\Q(\mu_{p^{\infty}})$ is the cyclotomic extension of $\Q$ obtained by adjoining all
$p$-power roots of unity, let $\kappa:{}_{\infty}G \ra \Z_p^{\times}$ denote the cyclotomic character and let $\gamma$
be a topological generator of $\Gamma =\, _{\infty}G^{(p-1)}$.
For an elliptic curve $E/\Q$ and a prime $p$ such that $E$ has good or multiplicative
reduction at $p$, we denote the $p$-adic regulator, divided by $\log_p(\kappa(\gamma))^r$, by
    $\Reg_\gamma(E/\Q)$ and we let $\mathcal{L}_p(E,T)$
denote the series expansion of the $p$-adic $L$-function $L_p(E,s)$ associated to $E$ in $T = \kappa(\gamma)^{s-1} - 1$.
\begin{conjecture}[$p$-adic BSD conjecture for elliptic curves]\label{mtt}
    Let $E$ be an elliptic curve over $\Q$ and let $p$ be a prime number such that $E$ has
good ordinary or multiplicative reduction at $p$.
    \begin{enumerate}[(i)]
    \item The order of vanishing $\ord_T(\mathcal{L}_p(E,T))$  of  $\mathcal{L}_p(E,T)$ at
$T=0$ is equal to the rank $r$ of $E(\Q)$ if $E$ has good ordinary or nonsplit multiplicative reduction at $p$. If $E$ has split multiplicative reduction at $p$, then $\ord_T(\mathcal{L}_p(E,T)) = r+1$.
\item If $E$ has good ordinary or nonsplit multiplicative reduction at $p$, then the leading term $\mathcal{L}_p^*(E,0)$ satisfies
    \begin{equation}\label{bsdp}\mathcal{L}_p^*(E,0) = \epsilon_p(E)\cdot  \frac{|\Sha(E/\Q)|
    \cdot \Reg_{\gamma}(E/\Q) \cdot \prod_{v} c_{v}}{|E(\Q)_{\tors}|^2},\end{equation}
    where $\epsilon_p(E)=(1-\alpha^{-1})^b$ for a unit root $\alpha$ of $x^2 - a_p x + p \in \Q_p[x]$ (with $a_p$
    the Hecke eigenvalue of the newform associated to $E$) and $b$ is~2 if $E$ has good ordinary reduction at $p$ and~1 if $E$ has
nonsplit multiplicative reduction at $p$.

    If $E$ has split multiplicative reduction at $p$, then
        \begin{equation*}\mathcal{L}_p^*(E,0)= \frac{\mathscr{S}_p}{\log_p(\kappa(\gamma))} \cdot \frac{|\Sha(E/\Q)| \cdot
    \Reg_{\gamma}(E/\Q) \cdot \prod_{v} c_{v}}{|E(\Q)_{\tors}|^2},\end{equation*}
    where $\mathscr{S}_p=\frac{\log_p(q_E)}{\ord_p(q_E)}$
    and $q_E$ is the Tate period of $E$ over $\Q_p$.
\end{enumerate}
\end{conjecture}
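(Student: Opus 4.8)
The plan is to reduce both parts of the conjecture to statements in Iwasawa theory over the cyclotomic $\Z_p$-extension $\Q_\infty/\Q$, whose Galois group is $\Gamma = \Gal(\Q_\infty/\Q) \cong \Z_p$ with the chosen topological generator $\gamma$. Write $\Lambda = \Z_p[[\Gamma]] \cong \Z_p[[T]]$ for the Iwasawa algebra, and let $X$ denote the Pontryagin dual of the $p$-primary Selmer group $\Sel_{p^\infty}(E/\Q_\infty)$, a finitely generated $\Lambda$-module. The cornerstone is the \emph{Iwasawa Main Conjecture}, which asserts that $X$ is $\Lambda$-torsion and that its characteristic ideal is generated by $\mathcal{L}_p(E,T)$. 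One divisibility is due to Kato, via his Euler system, and the reverse divisibility follows from Skinner--Urban under hypotheses on the residual representation $\rhobar$; I would take the Main Conjecture as the central input and carefully track the hypotheses it requires.

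Given the Main Conjecture, part (i) follows from a control theorem. Mazur's control theorem compares $\Sel_{p^\infty}(E/\Q)$ with the $\Gamma$-invariants of $\Sel_{p^\infty}(E/\Q_\infty)$; assuming $\Sha(E/\Q)$ is finite, the corank of the former is $r$, and one deduces that the characteristic series vanishes to order exactly $r$ at $T=0$ once the relevant cohomological error terms are shown to vanish. By the Main Conjecture this transfers to $\ord_T(\mathcal{L}_p(E,T)) = r$ in the good ordinary and nonsplit multiplicative cases. In the split multiplicative case the local condition at $p$ degenerates: the interpolation factor $1-\alpha^{-1}$ vanishes because $\alpha = 1$, producing an \emph{exceptional (trivial) zero}. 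Here I would invoke the Greenberg--Stevens theorem, which computes the $\mathscr{L}$-invariant as a logarithmic derivative of the $a_p$-eigenvalue in a Hida family and yields the extra order of vanishing $r+1$ together with the factor $\mathscr{S}_p/\log_p(\kappa(\gamma))$.

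For the leading term in part (ii), the strategy is to combine the Main Conjecture with the $p$-adic height pairing. Following Schneider and Perrin-Riou, one expresses the leading coefficient $\mathcal{L}_p^*(E,0)$ of the characteristic series through the structure of $X$: the pseudo-isomorphism type of $X$ contributes the order of the finite part of the Selmer group, the local Tamagawa factors $\prod_v c_v$, and the torsion factors, while the $p$-adic height pairing on $E(\Q)$ contributes its discriminant, which is exactly $\Reg_\gamma(E)$ after normalizing by $\log_p(\kappa(\gamma))^r$. The Euler factor $\epsilon_p(E) = (1-\alpha^{-1})^b$ enters as the ratio between the algebraic $p$-adic $L$-value and the motivic normalization, the exponent $b$ recording that both the Frobenius eigenvalue at $p$ and its dual contribute in the good ordinary case. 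Assembling these matches the right-hand side of \eqref{bsdp}. As the most complete base case I would first treat rank $0$ directly: the interpolation property gives $\mathcal{L}_p(E,0) = \epsilon_p(E)\, L(E,1)/\Omega_E$, the $p$-part of rank-zero BSD (known via Kolyvagin, Kato, and Skinner--Urban) identifies $L(E,1)/\Omega_E$ with $|\Sha|\prod_v c_v / |E(\Q)_{\tors}|^2$, and $\Reg_\gamma(E) = 1$ since the height pairing is empty.

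The main obstacle is the \emph{nondegeneracy of the $p$-adic height pairing}: unlike its archimedean counterpart, it is not known to be nonzero in general, so even the equality $\ord_T(\mathcal{L}_p(E,T)) = r$ in higher rank remains conditional on it, as does the appearance of $\Reg_\gamma(E)$ as a genuine $p$-adic unit multiple. A secondary obstacle is securing \emph{both} divisibilities in the Main Conjecture, which currently requires irreducibility and ramification hypotheses on $\rhobar$ that must be assumed or circumvented. The realistic outcome is therefore a theorem that is essentially complete in rank $0$ and conditional, in higher rank, on the Main Conjecture and on the nondegeneracy of the $p$-adic height pairing.
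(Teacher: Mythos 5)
The statement you were asked to prove is Conjecture~\ref{mtt}, the Mazur--Tate--Teitelbaum conjecture: it is an \emph{open conjecture}, not a theorem, and the paper contains no proof of it (nor does one exist). The paper only records partial results in its direction: Kato's divisibility (Theorem~\ref{thm:kato}), the Perrin-Riou--Schneider comparison of the characteristic series with the rank and the leading-term data (Theorem~\ref{thm:prs}), the resulting one-sided inequality~\eqref{rank-ineq}, and the fact that when $\cL_p(E,0)\neq 0$ part (ii) is known up to a rational factor whose prime support can be controlled via Skinner--Urban. Your outline is an accurate survey of exactly this circle of ideas, and to your credit you state in your final paragraph that the outcome is only a conditional theorem. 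But that concession is the whole point: what you have written is a description of the evidence for the conjecture, not a proof of it, so as a proof attempt it has an irreparable gap.

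To name the irreducible obstructions concretely: (a) the Main Conjecture determines $\cL_p(E,T)$ only up to a unit in $\Lambda$, so even granting it in full one obtains the leading coefficient only up to a $p$-adic unit --- i.e.\ the $p$-part of~\eqref{bsdp} --- whereas the conjecture asserts exact equality in $\Q_p$; relatedly, the Iwasawa-theoretic side sees only the $p$-primary part $\Sha(E/\Q)(p)$, never the full order $|\Sha(E/\Q)|$ (compare the statement of Theorem~\ref{thm:prs}, which is a valuation statement involving $\Sha(p)$ only). (b) Nondegeneracy of the $p$-adic height pairing (Schneider's conjecture) is open, so the equality $\ord_T\cL_p(E,T)=r$ and the nonvanishing of $\Reg_\gamma(E)$ are both conditional; only the inequality~\eqref{rank-ineq} is unconditional. (c) In the split multiplicative case, Greenberg--Stevens gives the exceptional-zero derivative formula and the $\mathscr{L}$-invariant, but the full assertion $\ord_T\cL_p(E,T)=r+1$ with the stated leading term is again open beyond the analytic rank zero situation. (d) Even your ``base case'' of rank $0$ is not a theorem in the exact form required: the classical BSD formula in rank $0$ (to which the $p$-adic statement is equivalent via the interpolation property) is known only up to a rational factor, not as an exact equality including $|\Sha|$. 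In short, no blind attempt could have succeeded here; the correct assessment of your submission is that it is a competent summary of why the conjecture is believed and partially verified, not a proof.
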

For primes of supersingular reduction, the $p$-adic $L$-function and the $p$-adic regulator can also be defined,  and a $p$-adic BSD conjecture has been formulated by Bernardi and Perrin-Riou
\cite{bernardi-perrin-riou}.

Much work has been done toward a proof of Conjecture~\ref{mtt}, but since most of it 
also applies to higher-dimensional modular abelian varieties,
we defer a discussion of the known results to the end of this introduction.

We note that in the case of elliptic curves, the classical BSD conjecture (Conjecture~\ref{bsdec}) shares many of the same
arithmetic quantities with the $p$-adic BSD conjecture (Conjecture~\ref{mtt}); the main difference is that the regulator
and $L$-series are replaced with $p$-adic analogues. In particular, the conjectures
are equivalent if the rank is~0 and $p$ has good ordinary or nonsplit multiplicative reduction.
Consequently, one might expect that a statement like Conjecture~\ref{bsdabvar} could be
formulated and studied for a modular abelian variety $A/\Q$ associated to a newform $f$; this is the goal of the
present paper.

The main theoretical difficulty in formulating the conjecture was that, prior to the
present work, no construction of \emph{the} $p$-adic $L$-function associated to a modular
abelian variety $A/\Q$ of dimension greater than~1 seemed to be known. Nevertheless, one knows that by the general motivic framework outlined by Coates
\cite{coates:bourbaki}, a $p$-adic $L$-series attached to $A$ should interpolate the complex $L$-series $L(A,s)$
at special values, so it seems plausible that it could be defined, similar to $L(A,s)$, as the product of
$p$-adic $L$-functions $L_p(f^\sigma,s)$ associated to the Galois conjugates $f^\sigma$ of $f$.

However, there is no obvious canonical choice for $L_p(f^\sigma,s)$, since picking a $p$-adic
$L$-function associated to $f^\sigma$ requires picking a Shimura period (see Theorem~\ref{thm:shimura})
for each $f^\sigma$, that is, a complex number $\Omega^+_{f^\sigma}$ such that $L(f^\sigma,1)/\Omega^+_{f^\sigma}$ is
algebraic.
In the case of elliptic curves, this is not an issue, since we can choose $\Omega^+_f$ to be
the real period $\Omega^+_E$ of the associated elliptic curve $E$.

We show that there is a natural way to pick a set of Shimura periods, which allows us to construct the $p$-adic $L$-function attached to $A/\Q$. Indeed, an extension of Conjecture~\ref{mtt} to modular abelian varieties  should be equivalent to Conjecture~\ref{bsdabvar} in rank~0.
Since the latter involves the real period $\Omega^+_A$ associated to $A$, this equivalence
forces the  product of the Shimura periods $\Omega^+_{f^\sigma}$ to equal $\Omega^+_A$.
We show that we can choose a set of Shimura periods with this property (see
Theorem~\ref{thm:omegas}), which allows us to normalize our $p$-adic $L$-function.

In this way we are able to explicitly construct, essentially generalizing the treatment in
\cite{mtt}, a $p$-adic $L$-function associated to $A$ with the expected interpolation
property, in the case where $p$ is a prime of good ordinary reduction (see \eqref{Lpf} and
\eqref{cLpf}).
We keep the notation introduced prior to Conjecture~\ref{mtt} and extend it to the case of
modular abelian varieties.
We also define the $p$-adic multiplier $\epsilon_p(A)$ as follows: fix a prime $\wp\mid{}p$ of
the number field $K_f$ generated by the Hecke eigenvalues of $f$ and let $\alpha^\sigma$ denote
the unit root of $x^2-\sigma(a_p)x+p \in (K_f)_\wp[x]$, where $\sigma:K_f\hookrightarrow\C$ is an
    embedding.
For a Galois conjugate $f^\sigma$ of $f$ define
$\epsilon_p(f^\sigma)=(1-1/(\alpha^\sigma))^2$ and
define $\epsilon_p(A)$ to be the product of the $p$-adic multipliers $\epsilon_p(f^\sigma)$ over all distinct
Galois conjugates of $f$.

We make the following $p$-adic BSD conjecture:
\begin{conjecture}\label{pbsd} Let $A/\Q$ be a modular abelian variety associated to a
    newform $f$ and let $p$ be a prime
number such that $A$ has good ordinary reduction at $p$.
Then the Mordell-Weil rank $r$ of $A$ equals
      $\ord_T(\cL_p(A,T))$ and
      \begin{equation}\label{eq-pbsd}\mathcal{L}_p^*(A,0) =\epsilon_p(A)
\cdot\frac{ |\Sha(A/\Q)|
      \cdot \Reg_{\gamma}(A/\Q) \cdot \prod_v  c_v
}{|A(\Q)_{\tors}|\cdot|A^{\vee}(\Q)_{\tors}|},\end{equation} where $\mathcal{L}_p^*(A,0)$
is the leading coefficient of the $p$-adic $L$-series $\mathcal{L}_p(A,T)$. \end{conjecture}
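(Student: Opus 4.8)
The plan is to reduce Conjecture~\ref{pbsd} to the analogous statement for each Galois conjugate $f^\sigma$ of $f$ and then to invoke the known partial results toward the Mazur--Tate--Teitelbaum conjecture (Conjecture~\ref{mtt}) at the level of newforms. By the very construction of $\cL_p(A,T)$ as the product of the $\cL_p(f^\sigma,T)$ over the distinct Galois conjugates, the order of vanishing is additive and the leading coefficient is multiplicative:
\begin{equation*}
\ord_T(\cL_p(A,T)) = \sum_\sigma \ord_T(\cL_p(f^\sigma,T)), \qquad \mathcal{L}_p^*(A,0) = \prod_\sigma \mathcal{L}_p^*(f^\sigma,0).
\end{equation*}
So the first task is to establish the rank statement and the leading-coefficient formula one factor at a time, and then to reassemble the per-conjugate data into the global invariants of $A$.

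For the order-of-vanishing claim I would combine two ingredients. First, the divisibilities in the cyclotomic Iwasawa Main Conjecture for modular forms --- Kato's Euler system bound in one direction and a Skinner--Urban-type argument in the other --- pin down the characteristic ideal of the relevant Selmer group, and hence relate $\ord_T(\cL_p(f^\sigma,T))$ to the corank of that Selmer group. Second, assuming finiteness of Sha and the non-degeneracy of the $p$-adic height pairing, this corank equals the $K_f$-rank of $A(\Q)$. Since the $\ord_T(\cL_p(f^\sigma,T))$ are all equal by Galois equivariance, summing the $d=\dim A$ equal contributions yields $\ord_T(\cL_p(A,T))=r$.

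For the leading-term formula I would run a $p$-adic analogue of the Kolyvagin and Iwasawa-theoretic bookkeeping, matching each $\mathcal{L}_p^*(f^\sigma,0)$ to $\epsilon_p(f^\sigma)$ times a quotient of a $p$-adic regulator, the order of the relevant Sha, a product of Tamagawa factors, and torsion, all measured against the Shimura period $\Omega^+_{f^\sigma}$ supplied by Theorem~\ref{thm:shimura}. Here the normalization of Theorem~\ref{thm:omegas}, which forces $\prod_\sigma \Omega^+_{f^\sigma} = \Omega^+_A$, is exactly what makes the product of the per-conjugate formulas collapse into the global quantities $|\Sha(A/\Q)|$, $\Reg_{\gamma}(A)$, $\prod_v c_v$, and $|A(\Q)_{\tors}|\cdot|A^\vee(\Q)_{\tors}|$ of \eqref{eq-pbsd}, with $\prod_\sigma \epsilon_p(f^\sigma)=\epsilon_p(A)$ emerging as the product of the unit-root interpolation factors.

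The hard part will be twofold. The deepest obstruction is that a full proof of the leading-term formula even for a single newform remains open: beyond the Main Conjecture it requires both the finiteness of Sha and a $p$-adic BSD formula presently known only in low rank, so any unconditional theorem would be confined to special cases. The second, more structural, difficulty is reconciling the per-conjugate invariants with the global ones: the product of the $p$-adic regulators attached to the $f^\sigma$ need not literally equal $\Reg_{\gamma}(A)$, and the distinction between $A$ and $A^\vee$, together with congruences among the $f^\sigma$ and the discrepancy between the Shimura and N\'eron lattices, can introduce $p$-adic unit and Tamagawa-type correction terms that must be shown to cancel. Controlling these corrections --- equivalently, proving that the chosen period normalization is compatible with the integral structures underlying $\Reg_{\gamma}(A)$, $\prod_v c_v$, and the torsion --- is where the essential work lies, and is the reason the statement is recorded as a conjecture rather than a theorem.
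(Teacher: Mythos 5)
You were asked to prove Conjecture~\ref{pbsd}, but this statement is the paper's central \emph{conjecture}, not a theorem: the paper contains no proof of it, and none is currently possible, since the statement strictly generalizes the Mazur--Tate--Teitelbaum conjecture (Conjecture~\ref{mtt}), which is itself open. What the paper actually does is (i) construct the objects needed to make the statement well posed --- the $p$-adic $L$-series $\cL_p(A,T)$ normalized through Shimura periods satisfying Theorem~\ref{thm:omegas}, and a canonical $p$-adic regulator via Lemma~\ref{lem:reg} and Definition~\ref{def:reg}; (ii) check consistency: equivalence with classical BSD in rank~0 via the interpolation property \eqref{interpol}, and compatibility with the Iwasawa Main Conjecture through Theorems~\ref{thm:kato} and~\ref{thm:prs}, yielding the inequality \eqref{rank-ineq}; and (iii) supply numerical evidence for dimension-2 Jacobians and a rank-4 twist. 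Your proposal is honest about this: you sketch a reduction to the Galois conjugates $f^\sigma$ plus Iwasawa-theoretic input, and you concede that the leading-term formula is open even for a single newform. In that sense your write-up is a research program in the same spirit as the paper's introduction, not a proof, and the ``hard parts'' you identify at the end are not residual technicalities of an otherwise complete argument --- they \emph{are} the open content of the conjecture.

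Two technical points in your sketch also deserve flagging. First, your claim that the orders $\ord_T(\cL_p(f^\sigma,T))$ are all equal ``by Galois equivariance'' is not justified: the coefficients of $\cL_p(f^\sigma,T)$ lie in the completion $(K_f)_\wp$ for the single fixed prime $\wp\mid p$, and $\sigma$ need not act continuously with respect to the $\wp$-adic topology (the conjugates are naturally attached to possibly distinct primes above $p$), so equality of the individual vanishing orders is itself conjectural rather than automatic; the rank $r$ relates to $g=[K_f:\Q]$ only through the $K_f$-module structure of $A(\Q)\otimes\Q$. Second, the inputs you invoke are weaker than you need: Kato's divisibility is stated in the paper only for elliptic curves (Theorem~\ref{thm:kato}), Skinner--Urban requires additional hypotheses, and the Perrin-Riou--Schneider comparison (Theorem~\ref{thm:prs}) is conditional on finiteness of $\Sha(A/\Q)(p)$ and nondegeneracy of the $p$-adic height, and only pins down the leading coefficient up to a $p$-adic unit. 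That unit ambiguity is exactly what blocks the exact identity \eqref{eq-pbsd}, as the paper emphasizes when it notes that Iwasawa-theoretic results ``typically only assert equality up to a $p$-adic unit, whereas our Conjecture~\ref{pbsd} asserts full equality.''
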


Note that the conjecture of Mazur, Tate, and Teitelbaum in the good ordinary case is a
special case of Conjecture~\ref{pbsd}.
Moreover, if the rank of $A/\Q$ is zero, then our conjecture is equivalent to the
classical BSD conjecture due to the interpolation property
\[
    \mathcal{L}_p(A,0) = L_p(A,1) =   \epsilon_p(A) \cdot \frac{L(A,1)}{\Omega^+_A}.
\]
Most progress toward proving the conjecture of Mazur, Tate, and Teitelbaum uses Iwasawa theory
and many results remain valid in our setup.
Unfortunately, Iwasawa-theoretic results typically only assert equality up to a $p$-adic
unit, whereas our Conjecture~\ref{pbsd} asserts full equality.
We restrict to the good ordinary case from now on, although most results have
supersingular or multiplicative analogues.
See \cite[\S6,7]{stein-wuthrich:shark} for a summary of such analogues in the elliptic
curves case.

Let $X(A/_\infty\Q)$ denote the Pontryagin dual of the $p$-Selmer group of $A$ and let
$\Lambda$ be the completed group algebra $\Z_p[[\Gamma]]$.
Using $p$-adic Hodge theory, Kato \cite{kato:secret} has shown that $X(A/_\infty\Q)$ is a torsion
$\Lambda$-module.
Hence we can associate a characteristic series $f_A(T)\in \Z_p[[T]]$, well-defined up to a factor
in $\Z_p[[T]]^\times$, to $X(A/_\infty\Q)$.

\begin{conjecture}(Main conjecture of Iwasawa theory for abelian varieties with good
    ordinary reduction)
There exists an element $u(T)\in\Lambda^\times$ such that
\[
\cL_p(A,T)=f_A(T)\cdot u(T).
\]
\end{conjecture}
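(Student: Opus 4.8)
The plan is to deduce this statement from the cyclotomic main conjecture for the newform $f$, which is known under mild hypotheses as the combination of two divisibilities. Since $A=A_f$ has dimension $[K_f:\Q]$ and its $p$-adic Tate module carries an action of $\mathcal{O}_{K_f}\otimes\Z_p=\prod_{\wp\mid p}\mathcal{O}_\wp$, I would first decompose both sides of the asserted identity according to the primes $\wp\mid p$ of $K_f$. Under this decomposition the dual Selmer module $X(A/_\infty\Q)$ becomes, up to pseudo-isomorphism, the direct sum over $\wp$ of the dual Selmer modules $X_\wp$ attached to the $\wp$-adic representation of $f$, each a torsion module over the larger Iwasawa algebra $\Lambda_\wp=\mathcal{O}_\wp[[\Gamma]]$, which is finite free over $\Lambda=\Z_p[[\Gamma]]$. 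On the analytic side, the definition of $\cL_p(A,T)$ as the product of the $L_p(f^\sigma,T)$ over the embeddings $\sigma\colon K_f\hookrightarrow\C$ groups the factors by $\wp$: those $\sigma$ inducing a fixed $\wp$ correspond to the $\Q_p$-embeddings of $(K_f)_\wp$, and their product is exactly the norm $N_{\Lambda_\wp/\Lambda}$ of a single $\wp$-adic $L$-function $\cL_p(f,\wp)$.

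Next, for each $\wp$ fix a generator $g_\wp$ of the characteristic ideal $\chr_{\Lambda_\wp}(X_\wp)$. Kato's Euler system \cite{kato:secret} gives the divisibility $g_\wp\mid\cL_p(f,\wp)$, while the reverse divisibility $\cL_p(f,\wp)\mid g_\wp$ follows from the Eisenstein-congruence argument of Skinner and Urban, under the irreducibility and ramification hypotheses for which their theorem is available. Together these yield an equality of ideals $\chr_{\Lambda_\wp}(X_\wp)=(\cL_p(f,\wp))$ in $\Lambda_\wp$, hence a unit $u_\wp\in\Lambda_\wp^\times$ with $\cL_p(f,\wp)=g_\wp\,u_\wp$. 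Applying $N_{\Lambda_\wp/\Lambda}$, which carries the characteristic ideal of a $\Lambda_\wp$-torsion module to the characteristic ideal of the same module viewed over $\Lambda$ and sends units to units, and then multiplying over all $\wp\mid p$, I would obtain $\cL_p(A,T)=f_A(T)\cdot u(T)$ with $u(T)=\prod_\wp N_{\Lambda_\wp/\Lambda}(u_\wp)\in\Lambda^\times$, which is the desired statement.

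The main obstacle is not the homological algebra of this descent but the reconciliation of normalizations. The equality of ideals produced by Kato and by Skinner and Urban is stated for $p$-adic $L$-functions built from a canonical, integral period, whereas $\cL_p(A,T)$ is normalized in this paper by the requirement that the product of the Shimura periods $\Omega^+_{f^\sigma}$ equal the real period $\Omega^+_A$ (Theorem~\ref{thm:omegas}). I must therefore verify that these two normalizations differ only by a $p$-adic unit, i.e.\ that no congruence-module or Manin-constant factor of positive $p$-valuation intervenes; this also subsumes the integrality question, since $f_A(T)\in\Z_p[[T]]$ holds by construction but $\cL_p(A,T)\in\Lambda$ only once the period is correctly normalized. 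A secondary difficulty is that the pseudo-isomorphism relating $X(A/_\infty\Q)$ to $\bigoplus_\wp X_\wp$ must be controlled at the level of characteristic ideals, so that pseudo-null discrepancies do not perturb the generator, and that the factors coming from primes $\wp$ of differing residue degrees assemble correctly under the norm. Finally, one should record explicitly the hypotheses inherited from the work of Skinner and Urban under which the argument becomes unconditional.
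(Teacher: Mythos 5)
The first thing to note is that this statement is not proved in the paper at all: it is stated as a \emph{conjecture} (the Main Conjecture of Iwasawa theory for abelian varieties with good ordinary reduction), and the surrounding text only records that when $A$ is an elliptic curve it is known in many cases (Rubin for CM curves, Skinner--Urban more generally) and that Kato's theorem gives one divisibility up to a power of $p$. So your attempt cannot be compared against a proof in the paper; it has to stand on its own as a proof of the conjecture, and it does not.

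Your plan is the standard route one would try --- decompose $X(A/_\infty\Q)$ over the primes $\wp\mid p$ of $K_f$ via $\mathcal{O}_{K_f}\otimes\Z_p=\prod_{\wp\mid p}\mathcal{O}_\wp$, invoke Kato for one divisibility and Skinner--Urban for the other over $\Lambda_\wp=\mathcal{O}_\wp[[\Gamma]]$, then descend to $\Lambda$ by the norm --- but every external input is conditional in ways you flag and then leave unresolved. Skinner--Urban's theorem requires, among other hypotheses, that the residual representation attached to $f$ at $\wp$ be irreducible and ramified at some prime exactly dividing $N$ and different from $p$; these conditions fail for infinitely many $f$ (Eisenstein primes, squarefull level), and in those cases no proof of the equality of characteristic ideals is known. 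Kato's divisibility in integral form (i.e.\ without the factor $p^m$ appearing in Theorem~\ref{thm:kato}) likewise needs a big-image hypothesis. Most seriously, the normalization issue in your last paragraph is not a bookkeeping step to be ``verified'' but is itself an open problem: Kato and Skinner--Urban work with $p$-adic $L$-functions normalized by canonical congruence-module periods, whereas $\cL_p(A,T)$ here is pinned down by the requirement $\prod_{\sigma}\Omega^+_{f^\sigma}=\Omega^+_A$ of Theorem~\ref{thm:omegas}; the ratio of the two normalizations involves the congruence number and the Manin constant, whose $p$-valuations are not known to vanish in general (the paper must even \emph{assume} $c_A=1$ as a running hypothesis). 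Since you neither establish the hypotheses of the cited theorems nor show the period ratio is a $p$-adic unit, what your argument yields is a conditional statement --- the conjecture holds for those $A$ and $p$ where Kato and Skinner--Urban apply and the periods agree up to a $p$-unit --- which is exactly why the paper states this assertion as a conjecture rather than a theorem.
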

If $A=E$ is an elliptic curve, then the main conjecture is known to be a theorem in many cases.
If $E$ has complex multiplication, then a proof is due to Rubin
\cite{rubin:main-conjectures}.
Many other cases have been proven, culminating in the work \cite{skinner-urban} of Skinner
and Urban.
See \cite[\S7]{stein-wuthrich:shark} for an overview.  The following unconditional result is due to Kato \cite{kato:secret}:
\begin{theorem}\label{thm:kato}(Kato)
Let $A$ be an elliptic curve.
There is an integer $m\ge0$ such that $f_A(T)$ divides $p^m\cL_p(A,T)$.

\end{theorem}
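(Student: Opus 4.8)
The plan is to obtain the divisibility from Kato's Euler system of Beilinson--Kato elements, taking its construction and norm-compatibility as given. Three ingredients combine: first, a distinguished class $\mathbf{z}$ in the Iwasawa cohomology $H^1_{\mathrm{Iw}}(\Q,T) := \varprojlim_n H^1(\Q(\mu_{p^n}),T)$ of $T = T_p(A)$, where the inverse limit is a $\Lambda$-module; second, Kato's explicit reciprocity law, which identifies the image of $\mathbf{z}$ under a Perrin-Riou-type Coleman map with the $p$-adic $L$-series $\cL_p(A,T)$; and third, the general Euler system bound, which turns a nontorsion Euler system into an upper bound for $\mathrm{char}_\Lambda(X) = (f_A)$.

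First I would recall the zeta elements. From products of Siegel units on the modular curve $Y_1(N)$ one builds classes in motivic cohomology $H^2_{\mathrm{mot}}(Y_1(N)_{\Q(\mu_{p^n})},\Q(2))$; their \'etale realizations lie in $H^1(\Q(\mu_{p^n}),T)$ and are corestriction-compatible, so they assemble into $\mathbf{z}\in H^1_{\mathrm{Iw}}(\Q,T)$. One checks that $\mathbf{z}$ is not $\Lambda$-torsion, which follows once its localization at $p$ is shown to be nonzero in the next step.

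Second, and this is the technical heart, I would invoke Kato's explicit reciprocity law. Perrin-Riou's theory supplies a $\Lambda$-linear map $\mathrm{Col}\colon H^1_{\mathrm{Iw}}(\Q_p,T)\to\Lambda$ (defined up to a bounded factor), and the reciprocity law computes $\mathrm{Col}(\mathrm{loc}_p(\mathbf{z}))$ as a unit multiple of the Mazur--Swinnerton-Dyer modular-symbol $p$-adic $L$-function, hence of $\cL_p(A,T)$, up to a power of $p$. Matching normalizations---the real period defining $\cL_p$ against the de Rham period of the Beilinson--Kato element, together with the interpolation factor $\epsilon_p$---is the delicate point, requiring a comparison of the syntomic/de Rham regulator of the zeta element with the complex period.

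Third, I would feed $\mathbf{z}$ into the Euler system divisibility theorem. Under the standard hypotheses (the residual representation $\overline{\rho}$ has large image, and $A$ has good ordinary reduction at $p$), the Euler system argument gives $\mathrm{char}_\Lambda(X)\mid\mathrm{char}_\Lambda\!\big(H^1_{\mathrm{Iw}}(\Q,T)/\Lambda\mathbf{z}\big)$, and combining this with the reciprocity computation of the localization at $p$ yields $f_A(T)\mid p^m\cL_p(A,T)$ for some $m\ge0$. The power of $p$ is intrinsic to the method: the Euler system controls the representation sharply away from $p$, but both the passage from the Euler system quotient to $X$ and the normalization of $\mathrm{Col}$ are tight only up to bounded $p$-power denominators, which is exactly why Kato obtains one divisibility up to $p^m$ rather than the full main conjecture. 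The main obstacle is the explicit reciprocity law of the second step; granting it together with the hypotheses on $\overline{\rho}$, the Euler system bound of the third step is comparatively formal.
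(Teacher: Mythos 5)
The paper does not prove this statement at all: it is quoted as a black box from Kato \cite{kato:secret}, so the only meaningful comparison is with Kato's own argument. Your outline does reproduce the skeleton of that argument correctly: Beilinson--Kato zeta elements built from Siegel units, assembled by corestriction into a class $\mathbf{z}\in H^1_{\mathrm{Iw}}(\Q,T)$; the explicit reciprocity law identifying the image of $\mathrm{loc}_p(\mathbf{z})$ under the Coleman--Perrin-Riou map with the modular-symbol $p$-adic $L$-function; and the Euler-system bound $\mathrm{char}_\Lambda(X)\mid\mathrm{char}_\Lambda\bigl(H^1_{\mathrm{Iw}}(\Q,T)/\Lambda\mathbf{z}\bigr)$.

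There is, however, one genuine gap, concentrated in your third step. The theorem as stated applies to \emph{every} elliptic curve with good ordinary reduction at $p$, with no hypothesis on the Galois representation, whereas your Euler-system step explicitly assumes that $\overline{\rho}$ has large image. Moreover, you have the logic of the $p^m$ backwards: in Kato's work, when the image of $G_\Q$ on $T_p(A)$ is all of $\GL_2(\Z_p)$ the divisibility is \emph{integral} in $\Lambda$ (no power of $p$ is needed), and the factor $p^m$ --- equivalently, divisibility in $\Lambda\otimes\Q_p$ --- is precisely what survives in the general case, where the image can be small (CM curves, curves with a rational $p$-isogeny). Handling that general case is not formal: one must run the Euler-system machinery under weakened image hypotheses, and the bounded $p$-power error terms it then produces are the actual source of $p^m$. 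As written, your argument proves the theorem only for curves with large residual image and says nothing about the remaining curves, which are exactly the ones the $p^m$ exists to cover. A smaller point: the reason Kato obtains only one divisibility rather than the full main conjecture is that Euler systems bound Selmer groups from above only; that one-sidedness is independent of the $p$-power slack, so your closing sentence conflates two distinct limitations of the method.
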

The following result of Perrin-Riou \cite{perrin-riou:elliptiques} and Schneider \cite{schneider:heightII} relates $f_A(T)$ to the right hand side of~\eqref{eq-pbsd}:
\begin{theorem}\label{thm:prs}(Perrin-Riou, Schneider)
    The order of vanishing $\ord_{T=0} f_A(T)$ is greater than or equal to the rank $r$ of
    $A/\Q$.
    Equality holds if and only if the $p$-adic height pairing on $A$ is nondegenerate and the $p$-primary
part $\Sha(A/\Q)(p)$ of the Shafarevich-Tate group of $A$ is finite, in which case the
 leading coefficient of $f_A(T)$ has the same valuation as
\[
\epsilon_p(A)\cdot\frac{ |\Sha(A/\Q)(p)|
      \cdot \Reg_{\gamma}(A/\Q) \cdot \prod_v  c_v
}{|A(\Q)_{\tors}|\cdot|A^{\vee}(\Q)_{\tors}|}.
\]
\end{theorem}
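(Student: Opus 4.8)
The plan is to carry out Iwasawa descent, extracting both the order of vanishing and the leading coefficient of $f_A(T)$ from the structure of $X=X(A/_\infty\Q)$ near the augmentation ideal $T=\gamma-1$ of $\Lambda\cong\Z_p[[T]]$. Since $X$ is $\Lambda$-torsion, it is pseudo-isomorphic to a sum $\bigoplus_i\Lambda/(g_i)$ with $f_A=\prod_i g_i$ up to a unit, and I would first record that under this decomposition $\ord_{T=0}f_A=\sum_i\ord_{T=0}g_i$, while the $\Z_p$-ranks of the $\Gamma$-coinvariants $X_\Gamma=X/TX$ and invariants $X^\Gamma=X[T]$ are additive up to finite error (the finite kernels and cokernels of a pseudo-isomorphism contribute nothing to $\Z_p$-ranks of $\Gamma$-(co)homology). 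A direct computation on a cyclic piece shows that $\rank_{\Z_p}(\Lambda/(g))_\Gamma=\rank_{\Z_p}(\Lambda/(g))^\Gamma$ equals $1$ if $T\mid g$ and $0$ otherwise, whereas $\ord_{T=0}g$ equals the full multiplicity of $T$ in $g$; summing yields the basic inequality $\ord_{T=0}f_A\ge\rank_{\Z_p}X_\Gamma$.

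Next I would pin down $\rank_{\Z_p}X_\Gamma$ arithmetically via Mazur's control theorem in the good ordinary case: the restriction map $\Sel_{p^\infty}(A/\Q)\to\Sel_{p^\infty}(A/_\infty\Q)^\Gamma$ has finite kernel and cokernel, so dualizing identifies $X_\Gamma$ with $\Sel_{p^\infty}(A/\Q)^\vee$ up to finite groups. Combined with the descent sequence
\[
0\to A(\Q)\tensor\Qp/\Zp\to\Sel_{p^\infty}(A/\Q)\to\Sha(A/\Q)(p)\to 0,
\]
this gives $\rank_{\Z_p}X_\Gamma=r+\corank_{\Z_p}\Sha(A/\Q)(p)$, whence $\ord_{T=0}f_A\ge r$, with equality forcing $\Sha(A/\Q)(p)$ to be finite.

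For the equality criterion and the leading term I would, assuming $\Sha(A/\Q)(p)$ finite, compute the coefficient of $T^r$ in $f_A$ through a generalized $\Gamma$-Euler characteristic of $\Sel_{p^\infty}(A/_\infty\Q)$. The naive Euler characteristic degenerates precisely because $X^\Gamma$ is infinite, and the correction is governed by the Bockstein homomorphism attached to
\[
0\to\Lambda\xrightarrow{\,T\,}\Lambda\to\Z_p\to 0,
\]
which, transported through the control theorem to $A(\Q)\tensor\Q_p$ and $A^\vee(\Q)\tensor\Q_p$, is identified following Schneider with the $p$-adic height pairing on $A$. The upshot is a formula expressing the order-$r$ coefficient, up to a $p$-adic unit, as a product of the height regulator $\Reg_\gamma(A)$, the order $|\Sha(A/\Q)(p)|$ arising from the torsion of $X^\Gamma$ together with the descent sequence, the torsion orders $|A(\Q)_\tors|$ and $|A^\vee(\Q)_\tors|$, and the local correction factors coming from the kernels and cokernels of the control map. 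Since $\ord_{T=0}f_A=r$ holds exactly when this coefficient is nonzero, and $\Reg_\gamma(A)\neq0$ is the definition of nondegeneracy of the height pairing, this simultaneously yields the equality criterion and the asserted valuation of the leading coefficient.

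The main obstacle is this last step: correctly identifying the algebraically defined Bockstein with the arithmetic $p$-adic height pairing (so that its determinant is genuinely $\Reg_\gamma(A)$), and extracting every local factor from the control-theorem discrepancies. In particular, the bad places $v$ contribute the Tamagawa product $\prod_v c_v$, while the good ordinary local condition at $p$ must be analyzed to produce the multiplier $\epsilon_p(A)=\prod_\sigma(1-1/\alpha^\sigma)^2$; keeping track of these, and of the interplay between $A$ and its dual $A^\vee$ in the regulator and torsion terms, is where the delicate bookkeeping lies.
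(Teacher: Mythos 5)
Your sketch is, in essence, a reconstruction of the proof of the result that the paper itself never proves: the paper's entire justification for this theorem is the citation to \cite[Theorem~2']{schneider:heightII} (together with \cite{perrin-riou:elliptiques}), plus one normalization remark, namely that $\ord_p(\epsilon_p(A))=2\ord_p(N_p)$ where $N_p=\#\tilde{A}(\F_p)$, which is what reconciles Schneider's form of the leading coefficient with the factor $\epsilon_p(A)$ in the statement. Measured against that, your first half is complete and correct: the structure theorem gives $\ord_{T=0}f_A\ge\rank_{\Z_p}X_\Gamma$, Mazur's control theorem identifies $X_\Gamma$ with $\Sel_{p^\infty}(A/\Q)^\vee$ up to finite groups, and the descent sequence yields $\ord_{T=0}f_A\ge r+\corank_{\Z_p}\Sha(A/\Q)(p)\ge r$, with equality forcing finiteness of $\Sha(A/\Q)(p)$. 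Your second half, however, is a plan rather than a proof: identifying the Bockstein pairing with the $p$-adic height pairing, extracting $\prod_v c_v$ and the torsion orders from the control-theorem discrepancies, and producing $\epsilon_p(A)$ from the local condition at $p$ are exactly the content of Schneider's theorem, and you explicitly defer them. In particular, the one concrete calculation the paper does supply is missing from your sketch: Schneider's factor $N_p^2$ has the same valuation as $\epsilon_p(A)=\prod_\sigma\left(1-1/\alpha^\sigma\right)^2$ because $\ord_p(1-1/\alpha^\sigma)=\ord_p(1-\alpha^\sigma)$ and the non-unit root $\beta^\sigma$ contributes a $p$-adic unit to $\#\tilde{A}(\F_p)=\prod_\sigma(1-\alpha^\sigma)(1-\beta^\sigma)$. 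So: right strategy, rigorous inequality, but the leading-term formula is deferred at exactly the point where the paper defers to the literature.
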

See for instance \cite[Theorem~2']{schneider:heightII}, noting that we have
$\ord_p(\epsilon_p(A))=2\ord_p(N_p)$, where $N_p$ is the number of $\F_p$-rational points on the reduction
of $A$ over $\F_p$.

As a corollary of Theorem~\ref{thm:kato} and Theorem~\ref{thm:prs}, we have that
\begin{equation}\label{rank-ineq}
    \ord_{T=0} \cL_p(A,T)\ge\ord_{T=0} f_A(T)\ge r=\mathrm{rank}(A(\Q)),
\end{equation}
if $A$ is an elliptic curve, so one direction of the first part of
Conjecture~\ref{mtt} (with~\eqref{rank-ineq} suitably modified in the case of multiplicative
reduction) is already known.
Moreover, the second part of our Conjecture~\ref{pbsd} is consistent with the Main Conjecture, since the
latter implies that
the leading coefficients of the $p$-adic $L$-series and the characteristic series have the
same valuation.
If $\cL_p(E,0)\ne0$, then  Conjecture~\ref{mtt}, part (ii) is also known up to a rational
factor  (see \cite{kato:secret}, \cite{perrin-riou:elliptiques}, and also the exposition
in \cite[\S8]{stein-wuthrich:shark}).  The primes appearing in this factor can be
determined explicitly using \cite{skinner-urban}.
In the good ordinary case a similar result also holds when the $p$-adic analytic rank
is~1, under an additional hypothesis; see \cite[\S9]{stein-wuthrich:shark}.
This follows from work of Perrin-Riou \cite{perrin-riou:heegner} and Kato
\cite{kato:secret}.

Historically speaking, numerical evidence played a crucial role in the formulation of Conjecture~\ref{mtt} \cite[\S II.12]{mtt}.
Gathering evidence for Conjecture~\ref{pbsd} would require two computations
independent of the usual Birch and Swinnerton-Dyer conjecture: the computation of $p$-adic
regulators of $A$, as well as  the computation of special values of the $p$-adic
$L$-function attached to $A$. We give algorithms to compute these quantities and provide
the first numerical verification for Conjecture~\ref{pbsd} by considering the modular abelian
varieties of dimension~2 and rank~2 in \cite{empirical} and the Jacobian of a twist of $X_0(31)$ of
rank~4.

Our aim is to give a self-contained discussion of the $p$-adic Birch and Swinnerton-Dyer
conjecture for modular abelian varieties. To that end, we discuss both the theoretical and
the algorithmic aspects of $p$-adic special values and $p$-adic regulators.  This paper is
structured as follows: in $\S 2$, we give a construction of the $p$-adic $L$-series
attached to modular abelian varieties, making explicit certain aspects of \cite{mtt}.
This allows us to compute $p$-adic special values. In $\S 3$, we take a look at the
$p$-adic regulator attached to an abelian variety, focusing on the case when the abelian
variety is the Jacobian of a hyperelliptic curve. We begin by reviewing the work of
Coleman and Gross \cite{coleman-gross}, which gives the $p$-adic height pairing on
Jacobians of curves in terms of local height pairings. We discuss the two types of
local height pairings which arise and give an algorithm to compute $p$-adic
heights, which allows us to compute $p$-adic regulators.  In $\S4$ and $\S 5$ we present  the evidence for the conjecture in dimension 2.

\section*{Acknowledgements}
We thank Robert Pollack, Chris Wuthrich,  Kiran Kedlaya,  Shinichi Kobayashi, Bjorn Poonen, David Holmes and
Michael Stoll for several helpful conversations and David Roe and Robert Pollack for their work implementing an algorithm to compute $p$-adic $L$-series.  The first author was supported by NSF grant DMS-1103831.  The second author was supported by DFG grants STO~299/5-1 and KU~2359/2-1.  The third author was supported by NSF Grants DMS-1161226 and DMS-1147802.

      \section{$p$-adic $L$-functions attached to modular abelian varieties}
In this section, we construct the $p$-adic $L$-function attached to a newform $f\in
S_2(\Gamma_0(N))$, making explicit a few aspects of \cite[\S I.10]{mtt}.
This depends on the choice of a Shimura period as in Theorem~\ref{thm:shimura}.
In order to pin down the $p$-adic $L$-function we want, we relate the Shimura periods of
$f$ and its Galois conjugates to the real period of the abelian variety $A_f$ attached to $f$.
This leads to a definition of a $p$-adic $L$-function for $A_f$ which satisfies the
expected interpolation property~\eqref{interpol}.
Finally, we discuss how this $p$-adic $L$-function can be computed in practice.

 \subsection{Periods} Let $N$ be a positive integer and let $X_0(N)$ be the modular curve of level $N$.
The Jacobian $J_0(N)$ of $X_0(N)$ is an abelian
variety over $\Q$ of dimension equal to the genus of $X_0(N)$, which is equipped with an
action of the Hecke algebra $\T$. The space $S := S_2(\Gamma_0(N))$ of cusp forms of
weight 2 on $\Gamma_0(N)$ is a module over $\T$.
Let $f(z) = \sum_{n=1}^{\infty} a_n e^{2\pi i n z} \in S$ be a newform, let
$K_f$ be the totally real number field $\Q(\ldots, a_n, \ldots)$, and let $I_f$ denote the annihilator $\Ann_{\T}(f)$ of $f$
in $\T$.  Following Shimura \cite{shimura:jac}, we have that the quotient $$A_f =
J_0(N)/I_f J_0(N)$$ is an abelian variety over $\Q$ of dimension $g = [K_f: \Q]$ which is equipped with a faithful action of $\T/I_f$.
Moreover, $A_f$ is an \emph{optimal quotient} of $J_0(N)$ in the sense that the kernel of
$J_0(N) \ra A_f$ is connected.
For ease of notation, we will drop the subscript $f$ and write $A=A_f$.
\begin{remark}
    We assume that $f \in S_2(\Gamma_0(N))$ for convenience, because we need that $K_f$
    is totally real in order
    for Theorem~\ref{thm:shimura} and Theorem~\ref{thm:omegas} to hold precisely as
    stated.
    However, in the case where $K_f$ is a CM-field, Shimura \cite{shimura:periods} has proved that a slightly
    modified version of Theorem~\ref{thm:shimura} continues to hold.
    Using this, one can prove a result that is analogous to
    Theorem~\ref{thm:omegas} for arbitrary newforms $f\in S_2(\Gamma_1(N))$.
\end{remark}
There is a complex-valued pairing $\langle\;,\;\rangle$ on $S\times H_1(X_0(N),\Z)$, given by
integration:
\[
\langle h,\gamma\rangle=2\pi i\int_\gamma h(z)dz.
\]
This pairing induces a natural $\T$-module homomorphism
\[
\Phi:H_1(X_0(N),\Z)\to\Hom_\C(S,\C),
\]
called the {\em period mapping}.

Let $G_f$ be the set of embeddings $\sigma:K_f\into\C$ .
If $\sigma\in G_f$, we let $f^\sigma$ denote the conjugate of $f$ by $\sigma$.
We denote the complex vector space generated by the Galois conjugates of $f$ by $S_f$.
Let
$\Phi_f:H_1(X_0(N),\Z)\to\Hom_\C(S_f,\C)$ be given by $\Phi$ composed with restriction to $S_f$.
Then $\Phi_f(H_1(X_0(N),\Z))$ is a lattice in $\Hom_\C(S_f,\C)$ and  we have an isomorphism
\[
    A(\C)\cong \Hom_\C(S_f,\C)/\Phi_f(H_1(X_0(N),\Z)).
\]
A choice of basis $\cB$ of $S_f$ induces an isomorphism $\Hom_\C(S_f,\C)\cong
\C^g$ and $\cB$ maps via $\Phi_f$ to a lattice $\Lambda_\cB\subset\C^g$ such that
\[
A(\C)\cong\C^g/\Lambda_\cB.
\]
For a basis $\cB$ of $S_f$ we let $\Lambda^+_\cB$ (resp. $\Lambda^-_\cB$) be the fixed
points of $\Lambda_\cB$ under complex conjugation (resp. under minus complex conjugation).

We define the {\em real period} $\Omega^+_{A}$ (resp. the {\em minus period} $\Omega^-_{A}$) of $A$ as follows:
Let $\omega_A$ be the pullback of a generator of the  global relative
differential~$g$-forms on the N\'eron model $\mathcal{A}$ of $A$ over $\Spec(\Z)$ to $A$.
We call $\omega_A$ a {\em N\'eron differential} on $A$.
Then we define
\[
\Omega^{\pm}_{A}:=\int_{A(\C)^\pm}|\omega_A|.
\]
where $A(\C)^\pm$ denotes the set of  points of $A(\C)$ on which complex conjugation acts as
multiplication by $\pm1$.

Let $\cB$ be a $\Z$-basis of the finitely generated free $\Z$-module consisting of elements of $S_f$ with
Fourier coefficients in $\Z$.
Then we have (see \cite[\S3.2]{agashe-ribet-stein:manin})
\[
\Omega^\pm_{A}=\rho\cdot c_A\cdot\mathrm{vol}(\Lambda^{\pm}_\cB),
\]
where $c_A$ is the Manin constant of $A$, defined, for instance, in
\cite[\S3.1]{agashe-ribet-stein:manin} and $\rho\in\C$ is $i^g$ if $\pm=-$ and~1
otherwise.
It is known that $c_A$ is an integer and conjectured that it is always~1 (cf.
\cite[\S3.3]{agashe-ribet-stein:manin}).
There is no known algorithm to {\em compute} the Manin constant in general, which
complicates much of what we do below.  The evidence that $c_A=1$ is compelling, and we
make the following:

\vspace{1ex}
\begin{center}
{\bf Running Hypothesis:} {\em We assume for the rest of this paper that $c_A=1$.}
\end{center}
\vspace{1ex}

Let $H_1(X_0(N),\Z)^\pm$ denote the part of $H_1(X_0(N),\Z)$ fixed by complex conjugation
(resp. minus complex conjugation).
If $w,z\in\C$, then we write $w\sim z$ if $w$ and $z$ differ by a nonzero rational factor.
We have that
\[
\Omega^\pm_{A}\sim \rho\cdot \mathrm{vol}\left(\widetilde{\Lambda_\cB}^{\pm}\right),
\]
where $\widetilde{\Lambda_\cB}^{\pm}$ is the lattice $\Phi_f(H_1(X_0(N),\Z)^\pm)$,
$\Phi_f$ is induced by the choice of basis $\cB$ as above,
and the rational factor is the number of components of $A(\R)$ (resp. $A(\C)^-$).

We denote the complex $L$-series of $f$ by
\[
L(f,s)=\sum_{n\ge1} \frac{a_n}{n^s}.
\]
If $\psi$ is a Dirichlet character, we denote its Gauss sum by $\tau(\psi)$ and its conjugate
character by $\bar{\psi}$.
We also let $f_\psi$ denote the newform $f$ twisted by $\psi$ and $K_\psi$ the field
generated over $\Q$ by the values of $\psi$.

\begin{theorem}\label{thm:shimura}(Shimura, \cite[Theorem~1]{shimura:periods})
For all $\sigma\in G_f$ there exist $\Omega^+_{f^\sigma}\in\R$ and
$\Omega^-_{f^\sigma}\in i\cdot\R$ such that the following properties are satisfied:
\begin{enumerate}[(i)]
\item We have $$\frac{\pi i}{\Omega^{\pm}_{f^\sigma}}\left(\int_r^{i\infty} {f^\sigma}(z) dz \pm \int_{-r}^{i
\infty} {f^\sigma}(z) dz \right)\in K_f$$ for all $r\in\Q$.
\item If $\psi$ is a Dirichlet character, then
\[
\frac{L(f_{\bar{\psi}},1)}{\tau(\psi)\cdot \Omega^{\sign{\psi}}_f}\in K_f \cdot K_\psi.
\]
\item If $\psi$ is a Dirichlet character, then
\[
\sigma\left(\frac{L(f_{\bar{\psi}},1)}{\tau(\psi)\cdot
\Omega^{\sign(\psi)}_f}\right)=\frac{L(f^\sigma_{\bar{\psi^\sigma}},1)}{\tau(\psi^\sigma)\cdot
\Omega^{\sign(\psi^\sigma)}_{f^\sigma}}.
\]
\end{enumerate}
\end{theorem}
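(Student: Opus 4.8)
The plan is to build the periods $\Omega^\pm_{f^\sigma}$ out of the Eichler--Shimura theory and then read off the three assertions from the rationality of modular symbols together with Shimura's algebraicity theorem for the $\Aut(\C)$-action. First I would introduce the involution $\iota$ on $H_1(X_1(N),\Z)$ induced by complex conjugation on $X_1(N)$ (which is defined over $\Q$), together with the idempotent $e_f\in\T\otimes\Q$ cutting out the $f$-isotypic part $M:=e_f H_1(X_1(N),\Q)$. Here $M$ is free of rank two over $K_f=\T/I_f$, and since $\iota$ commutes with the Hecke action it splits $M=M^+\oplus M^-$ into $K_f$-lines. After fixing $K_f$-generators $\gamma^\pm$ of $M^\pm$ compatible with the integral structure, I would set $\Omega^\pm_{f^\sigma}:=\langle f^\sigma,\gamma^\pm\rangle$, up to the elementary normalization discussed below. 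That $\Omega^+_{f^\sigma}\in\R$ and $\Omega^-_{f^\sigma}\in i\R$ follows from $f^\sigma$ having totally real Fourier coefficients (recall $K_f$ is totally real): with the pairing $\langle h,\gamma\rangle=2\pi i\int_\gamma h\,dz$ one computes $\langle f^\sigma,\iota\gamma\rangle=\overline{\langle f^\sigma,\gamma\rangle}$, which forces reality on $M^+$ and purity on $M^-$.

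For part (i) I would, up to the factor $2\pi i$, identify $\int_r^{i\infty}f^\sigma\,dz\pm\int_{-r}^{i\infty}f^\sigma\,dz$ with $\langle f^\sigma,\{r,\infty\}\pm\{-r,\infty\}\rangle$, using that $\iota$ carries the geodesic from $r$ to $i\infty$ to the one from $-r$ to $i\infty$. These are a priori relative modular symbols, but by the Manin--Drinfeld theorem the idempotent $e_f$ carries them into $M$, and since $e_f\{r,\infty\}+e_f\{-r,\infty\}\in M^+$ while $e_f\{r,\infty\}-e_f\{-r,\infty\}\in M^-$, the relevant combination is a $K_f$-multiple of $\gamma^\pm$. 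As $f^\sigma$ is a Hecke eigenform with eigenvalues $\sigma(a_n)$, pairing against it sends this coordinate to its $\sigma$-image, so dividing by $\Omega^\pm_{f^\sigma}$ yields an element of $K_f$; the discrepancy between $\pi i$ and $2\pi i$ only contributes a factor in $\Q$.

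For part (ii) I would invoke the classical formula of Birch expressing twisted critical values through twisted modular symbols: for $\psi$ primitive modulo $m$,
\[ \sum_{a\bmod m}\psi(a)\,\langle f,\{a/m,\infty\}\rangle \;=\; c_0\cdot\tau(\psi)\cdot L(f_{\bar\psi},1), \]
with $c_0\in\Q^\times$, which follows from $\sum_a\psi(a)e^{2\pi ian/m}=\bar\psi(n)\tau(\psi)$ and the $q$-expansion of $f$. The same parity computation as in part (i) shows that the left-hand side lies in $M^{\sign\psi}$, so by part (i) it equals an element of $K_f\cdot K_\psi$ times $\Omega^{\sign\psi}_f$; thus $L(f_{\bar\psi},1)$, normalized by $\tau(\psi)$ and $\Omega^{\sign\psi}_f$ as in the statement, lies in $K_f\cdot K_\psi$. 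The parity $\psi(-1)=\sign(\psi)$ is exactly what selects the symmetric or antisymmetric combination, hence which period occurs.

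The main obstacle is part (iii), the $\Aut(\C)$-equivariance, which is the heart of Shimura's theorem. The difficulty is that nothing a priori forces the normalized value for $f$ to map under $\sigma$ to the normalized value for $f^\sigma$: the period pairing is analytic, whereas $\sigma$ acts through the arithmetic of Fourier coefficients and character values. The plan is to exploit a single, $\sigma$-independent choice of $\gamma^\pm\in M^\pm$, so that the whole family $\{\Omega^\pm_{f^\sigma}\}$ is obtained by pairing one fixed rational homology class against all the conjugates $f^\sigma$; the $\psi$-weighted coordinate expressions from parts (i) and (ii) are then visibly permuted by $\sigma$ at the level of $K_f$-coordinates. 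The genuinely hard input is to match this Betti-theoretic action with the Galois action on the algebraic parts of the special values, i.e.\ to compare equivariantly the two rational structures on cohomology (Betti versus algebraic de Rham); this is precisely Shimura's reciprocity law describing how $\Aut(\C)$ acts on modular forms and their periods, and I expect to need the full strength of \cite{shimura:periods} here rather than an elementary manipulation.
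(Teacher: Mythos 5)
The first thing to note is that the paper does not contain a proof of this statement: Theorem~\ref{thm:shimura} is imported verbatim, with attribution, from \cite[Theorem~1]{shimura:periods} and is used downstream as a black box. So there is no internal argument for your proposal to match or diverge from; what you have written is a reconstruction of the weight-two case of an external result. For parts (i) and (ii) your reconstruction is sound and standard: defining $\Omega^\pm_{f^\sigma}=\langle f^\sigma,\gamma^\pm\rangle$ for fixed $K_f$-generators $\gamma^\pm$ of the two rank-one eigenspaces $M^\pm$ of complex conjugation acting on $e_f H_1(X_1(N),\Q)$, using Manin--Drinfeld to see that $e_f\{r,\infty\}$ is an absolute rational class, and invoking Birch's identity --- which the paper itself records as \eqref{twist_interp}, citing \cite[\S I.8]{mtt} --- is exactly the right mechanism, and your reality/purity computation for $\Omega^+$ and $\Omega^-$ is correct.

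Where I would push back is on your treatment of (iii). You call it the genuinely hard input, requiring a Betti--de Rham comparison and ``the full strength'' of \cite{shimura:periods}. In weight two, with your own definition of the periods, this is not so: since $M^\pm$ is free of rank one over $K_f$ acting through the Hecke algebra, every class in $M^\pm$ is of the form $T\gamma^\pm$ with $T\in\T\otimes\Q$, and Hecke equivariance of the integration pairing gives $\langle f^\sigma,T\gamma^\pm\rangle=\sigma(\lambda_f(T))\,\Omega^\pm_{f^\sigma}$, where $\lambda_f(T)\in K_f$ is the eigenvalue of $T$ on $f$. Hence $[r]^\pm_{f^\sigma}=\sigma([r]^\pm_f)$ identically, and (iii) follows by applying any automorphism of $\C$ extending $\sigma$ to both sides of \eqref{twist_interp}; no de Rham rational structure ever enters. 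The one point that does require care, and which your sketch omits, is the Galois behaviour of Gauss sums: if $\sigma(\zeta_m)=\zeta_m^b$, then $\sigma(\tau(\psi))=\overline{\psi^\sigma}(b)\,\tau(\psi^\sigma)$, so a root of unity appears and must cancel; it cancels precisely when the Gauss sum in the normalization is matched to the character one twists by (divide the $\bar\psi$-twist by $\tau(\bar\psi)$, equivalently put $\tau(\psi)$ in the numerator). For quadratic $\psi$ --- the only case this paper ever uses --- one has $\bar\psi=\psi$ and $\tau(\psi)^2=\psi(-1)m\in\Q$, so the issue is invisible. With that bookkeeping in place, your plan yields a complete, self-contained proof in weight two; what the citation to Shimura buys the paper instead is the statement for arbitrary weight and nebentypus, where this purely homological argument is not available in so direct a form.
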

We call a set $\{\Omega^\pm_{f^\sigma}\}_{\sigma\in G_f}$ as in Theorem~\ref{thm:shimura} a set of {\em Shimura periods} for $f$.
Note that the conditions of Theorem~\ref{thm:shimura} do not determine the sets $\{\Omega^\pm_{f^\sigma}\}_{\sigma\in G_f}$.
Indeed, if $\{\Omega^\pm_{f^\sigma}\}_{\sigma\in G_f}$ satisfy the assertions of the
theorem, then this also holds for $\{\sigma(b)\cdot \Omega^\pm_{f^\sigma}\}_{\sigma\in
G_f}$, where $b\in K_f^\times$.

According to Shimura \cite[\S2]{shimura:periods}, the periods $\Omega^\pm_{f^\sigma}$ are
related to a certain period lattice, which gives us a way
to compare them to the periods $\Omega^\pm_A$.
To this end, let $\cB'=(f^\sigma)_{\sigma\in G_f}$ be a basis for
the complex vector space $S_f$, consisting of the Galois conjugates
of $f$ in some order.  Following Shimura, we define an
action of $K_f$ on $\C^g$ as follows:
let $a\in K_f$ act on $\C^g$
via the diagonal matrix $\mathrm{diag}((\sigma_i(a))_i)$.
Let  $(\Lambda_{\cB'}\otimes\Q)^\pm$ be the set of elements of $\Lambda_{\cB'}$ fixed by $\pm$ complex conjugation.
\begin{lemma}[Shimura]\label{lem:shimura}
$(\Lambda_{\cB'}\otimes\Q)^\pm$ is a one-dimensional $K_f$-vector space.
\end{lemma}
\begin{proof}
This is precisely \cite[\S2, pg.~215]{shimura:periods}.
\end{proof}

\begin{theorem}\label{thm:omegas}
Let $\{\Omega^\pm_{f^\sigma}\}_{\sigma\in G_f}$ be any choice of Shimura periods as in
Theorem~\ref{thm:shimura}.
Then we have
\[
    \Omega^\pm_A\sim\prod_{\sigma\in G_f}\Omega^\pm_{f^\sigma}.
\]
\end{theorem}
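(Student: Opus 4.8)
The plan is to start from the reduction already established above, namely $\Omega^\pm_A\sim\rho\cdot\mathrm{vol}(\widetilde{\Lambda_\cB}^{\pm})$ with $\widetilde{\Lambda_\cB}^{\pm}=\Phi_f(H_1(X_1(N),\Z)^\pm)$ and $\cB=\{b_1,\dots,b_g\}$ a $\Z$-basis of the forms in $S_f$ with integral Fourier coefficients, so that it suffices to prove $\rho\cdot\mathrm{vol}(\Phi_f(H_1^\pm))\sim\prod_{\sigma\in G_f}\Omega^\pm_{f^\sigma}$. Since $A(\C)\cong\C^g/\Phi_f(H_1)$, each eigenlattice $\Phi_f(H_1^\pm)$ has full rank $g$ inside the real eigenspace $V^\pm$ of complex conjugation on $\Hom_\C(S_f,\C)$; I would choose $\gamma_1,\dots,\gamma_g\in H_1^\pm$ whose images form a $\Z$-basis of $\Phi_f(H_1^\pm)$. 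Because $\cB$ is $\Q$-rational, complex conjugation is the standard conjugation in $\cB$-coordinates, so $V^+=\R^g$ and $V^-=i\R^g$; a short computation using $c\colon z\mapsto-\bar z$ and the reality of the Fourier coefficients of the $b_i$ shows that the entries $U^\pm_{ij}:=2\pi i\int_{\gamma_j}b_i\,dz$ are real for the plus sign and purely imaginary for the minus sign. Hence $\mathrm{vol}(\Phi_f(H_1^\pm))=|\det U^\pm|$, with no measure normalization to worry about.

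Next I would pass to the eigenform basis $\{f^\sigma\}_{\sigma\in G_f}$ of $S_f$. Under the isomorphism $K_f\riso S_f(\Q)$ onto the $\Q$-rational forms in $S_f$, given by $\mu\mapsto f_\mu=\sum_\sigma\sigma(\mu)f^\sigma$, each $b_i$ equals $f_{\mu_i}$ for some $\mu_i\in K_f$, so $U^\pm=\Xi^{T}P^\pm$ where $\Xi=(\sigma(\mu_i))_{\sigma,i}$ and $P^\pm=\big(2\pi i\int_{\gamma_j}f^\sigma\,dz\big)_{\sigma,j}$; thus $\det U^\pm=\Delta_\mu\cdot\det P^\pm$ with $\Delta_\mu=\det\Xi$. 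By Theorem~\ref{thm:shimura}(i) the normalized entries $r^\sigma_j:=P^\pm_{\sigma j}/\Omega^\pm_{f^\sigma}$ lie in $K_f$, so $\det P^\pm=\big(\prod_\sigma\Omega^\pm_{f^\sigma}\big)\cdot\det(r^\sigma_j)_{\sigma,j}$. The crucial input is Theorem~\ref{thm:shimura}(iii): once translated from twisted special values to period integrals, it gives the Galois-equivariance $r^\sigma_j=\sigma(r_j)$ for $r_j:=r^{\sigma_0}_j\in K_f$ and a fixed embedding $\sigma_0$, whence $\det(r^\sigma_j)_{\sigma,j}=\det(\sigma(r_j))_{\sigma,j}=:\Delta_r$.

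It then remains to show the leftover factor $\Delta_\mu\Delta_r$ is rational. Both $\{\mu_i\}$ and $\{r_j\}$ are $\Q$-bases of $K_f$ — the latter because $\Phi_f(H_1^\pm)$ has full rank, forcing $\det P^\pm\neq0$ — so writing $r_j=\sum_k\beta_{jk}\mu_k$ with $\beta\in\Mat_g(\Q)$ gives $\Delta_r=\Delta_\mu\det\beta$ and hence $\Delta_\mu\Delta_r=\Delta_\mu^2\det\beta=\disc(\mu_1,\dots,\mu_g)\cdot\det\beta\in\Q$. Therefore $\det U^\pm=(\Delta_\mu\Delta_r)\prod_\sigma\Omega^\pm_{f^\sigma}\sim\prod_\sigma\Omega^\pm_{f^\sigma}$. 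For the plus sign $\rho=1$ and $U^+$ is real, giving $\Omega^+_A\sim|\det U^+|\sim\prod_\sigma\Omega^+_{f^\sigma}$ directly. For the minus sign the factor $i^g$ coming from $V^-=i\R^g$ matches $\rho=i^g$ and the fact that $\prod_\sigma\Omega^-_{f^\sigma}\in i^g\R$, and the same rationality argument applies to the real part, yielding $\Omega^-_A\sim\prod_\sigma\Omega^-_{f^\sigma}$.

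The main obstacle is the step invoking Theorem~\ref{thm:shimura}(iii): as stated it concerns the ratios $L(f_{\bar\psi},1)/(\tau(\psi)\,\Omega^{\sign(\psi)}_f)$, whereas I need the Galois-conjugacy of the normalized period integrals $2\pi i\int_{\gamma_j}f^\sigma\,dz/\Omega^\pm_{f^\sigma}$ themselves. Bridging this requires Birch's formula expressing $L(f_{\bar\psi},1)$ as a $\psi$-weighted sum of the modular-symbol periods $\int_{a/m}^{i\infty}f\,dz$, followed by Fourier inversion over all Dirichlet characters $\psi$ modulo $m$ to recover the individual periods, all while tracking the Gauss sums $\tau(\psi)$ carefully so that the $\sigma$-equivariance of the $L$-value ratios descends to the periods. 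Everything else — the determinant factorization and the discriminant bookkeeping — is routine linear algebra over $K_f$.
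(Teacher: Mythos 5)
Your reduction of the theorem to the Galois-equivariance claim $r^\sigma_j=\sigma(r_j)$ is correct, and the surrounding linear algebra (the factorization $U^\pm=\Xi^{T}P^\pm$, and the rationality of $\Delta_\mu\Delta_r$ via the discriminant) is sound; the paper organizes the same algebra slightly differently, computing $\mathrm{vol}\left(\Lambda^\pm_{\cB'}\right)$ in two ways so that the factor $|\det(B)|$ cancels instead of invoking a discriminant argument. The problem is that the equivariance claim is exactly where you stop, and it is not a bridgeable technicality: it \emph{is} the theorem, since (as you say yourself) everything else is routine. The paper does not derive this equivariance from Theorem~\ref{thm:shimura} at all; it imports it from \cite[\S2]{shimura:periods}, namely the statement that in eigenform coordinates, with $K_f$ acting diagonally, one has $(\Lambda_{\cB'}\otimes\Q)^\pm=K_f\Omega^\pm$ where $\Omega^\pm=(\Omega^\pm_{f^\sigma})_\sigma$ --- which is precisely your $r^\sigma_j=\sigma(r_j)$, taken as an external input (Shimura \emph{constructs} his periods from this lattice, so for him it is essentially definitional, and properties (i)--(iii) are deduced from it, not the other way around). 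A proof that leaves this step as an acknowledged ``main obstacle'' has not proved the theorem.

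Moreover, the bridge you sketch is not mere bookkeeping, and run against Theorem~\ref{thm:shimura}(iii) \emph{as stated in the paper} it fails. Conjugating the Birch--Stevens identity by an extension $\tilde\sigma$ of $\sigma$ with $\tilde\sigma(\zeta_m)=\zeta_m^c$ produces the Gauss-sum factor $\tilde\sigma(\tau(\psi))=\overline{\psi^\sigma}(c)\,\tau(\psi^\sigma)$, and because (iii) pairs the twist by $\bar\psi$ with the Gauss sum of $\psi$, these factors enter \emph{squared}: Fourier inversion then yields the twisted equivariance $\tilde\sigma\left([u/m]^\pm_f\right)=[c^2u/m]^\pm_{f^\sigma}$, not $[u/m]^\pm_{f^\sigma}$, so $\det(r^\sigma_j)$ does not factor as $\Delta_r\prod_\sigma\Omega^\pm_{f^\sigma}$ without a further argument about how the cusp rescaling $u\mapsto c^2u$ acts on the $f$-isotypic part of $H_1$. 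In fact, specializing to $K_f=\Q$ (where the normalized symbols are rational, hence $\sigma$-fixed) this computation shows that (iii) as printed can only hold with $\tau(\bar\psi)$ in place of $\tau(\psi)$ --- equivalently, twist and Gauss sum taken for the \emph{same} character, which is Shimura's actual normalization; with that correction the factors $\overline{\psi^\sigma}(c)$ cancel and your inversion does go through, though you must still treat imprimitive characters mod $m$ separately (via Hecke/distribution relations) before inverting. So the Gauss-sum tracking you propose to defer is exactly where the mathematics lives; either carry it out with the corrected normalization, or do what the paper does and quote Shimura's lattice theorem directly.
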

\begin{proof}
Fix a $\Z$-basis $\cB=(h_1,\ldots,h_g)$  of the free $\Z$-module consisting of elements of $S_f$ with
Fourier coefficients in $\Z$.
Then there are $b_1,\ldots,b_g\in K=K_f$ such that
\[
f=\sum^g_{i=1}b_ih_i.
\]
 If $\sigma\in G_f$, then
\[
f^\sigma=\sum^g_{i=1}\sigma(b_i)h_i
\]
and hence we have
\begin{equation}\label{f_linear}
\langle f^\sigma, \gamma\rangle=\sum^g_{i=1} \sigma(b_i)\cdot \langle h_i,\gamma\rangle
\end{equation}
for each $\gamma\in H_1(X_0(N),\Z)$.

Now fix some ordering ${\sigma_1,\ldots,\sigma_g}$ of the elements of $G_f$ and
let $B=(b_{ij})$ be the $g\times g$-matrix with entries $b_{ij}=\sigma_j(b_i)$,
and $\cB'$ the basis $(f^{\sigma_i})$.
We will compute $\mathrm{vol}\left(\Lambda^\pm_{\cB'}\right)$ in two different ways and
the desired equality up to a rational number will fall out.

First we express $\mathrm{vol}\left(\Lambda^\pm_{\cB'}\right)$ in terms of
$\Omega^\pm_A$.
Note that \eqref{f_linear} implies
\begin{equation}\label{volumes1}
 \rho\cdot\mathrm{vol}\left(\Lambda^\pm_{\cB'}\right)\sim\rho\cdot\mathrm{vol}\left(\widetilde{\Lambda_{\cB'}}^\pm\right)\sim
\rho\cdot|\det(B)|\cdot\mathrm{vol}\left(\widetilde{\Lambda_{\cB}}^\pm\right)\sim|\det(B)|\cdot
\Omega^\pm_A.
\end{equation}

Let $\Omega^\pm\in\C^g$ be the vector whose $i$-th entry is $\Omega^\pm_{f^{\sigma_i}}$.
Since $\det(B)\ne0$, the elements $b_1,\ldots,b_g$ of $K$ form a basis for $K$ over $\Q$, so Lemma~\ref{lem:shimura} implies that
\[
(\Lambda_{\cB'}\otimes\Q)^\pm=K\Omega^\pm.
\]
Thus a basis of
$(\Lambda_{\cB'}\otimes\Q)^\pm$ as a $\Q$-vector space is
$(b_i \cdot \Omega^{\pm} )_{i=1,\ldots,g}$,
where $K$ acts diagonally on $\C^g$, as above.
Hence 
\begin{equation}\label{volumes2}
\rho\cdot\mathrm{vol}\left(\Lambda_{\cB'}^\pm\right)\sim|\det(B)|\cdot\prod_\sigma
\Omega^\pm_{f^\sigma},
\end{equation}
since
\[
(\Lambda_{\cB'}\otimes\Q)^\pm=(\Lambda_{\cB'}^\pm)\otimes\Q.
\]
The proof of the theorem now follows from \eqref{volumes1} and \eqref{volumes2}.
\end{proof}
\begin{remark}
If $L(f,1)\ne0$, then we can also argue as follows:
We have $$\frac{L(A,1)}{\prod_{\sigma\in G_f} \Omega^+_{f^\sigma}}\in\Q$$ by
Theorem~\ref{thm:shimura}.
But on the other hand, the quotient $\frac{L(A,1)}{\Omega^+_A}$ is a rational number as
well by \cite[Theorem~4.5]{agashe-stein:bsd}.
Hence we get $\prod_{\sigma\in G_f} \Omega^+_{f^\sigma}\sim\Omega^+_A$.
\end{remark}
\begin{remark}
In \cite{vatsal:canonical},  Vatsal defines canonical
Shimura periods associated to cuspforms.
It would be interesting to determine whether his periods satisfy
Theorem~\ref{thm:omegas}.
\end{remark}
From now on, we fix some choice $\{\Omega^\pm_{f^\sigma}\}_{\sigma\in G_f}$ such that
\begin{equation}\label{norm_periods}
\prod_{\sigma\in G_f}\Omega^\pm_{f^\sigma}=\Omega^\pm_A.
\end{equation}
If $\{\Psi^\pm_{f^\sigma}\}_{\sigma\in G_f}$ is another set of Shimura periods satisfying
\eqref{norm_periods}, then there is a unit $b\in\O_{K_f}$ such that
$\Psi^\pm_{f^\sigma}=\sigma(b)\Omega^\pm_{f^\sigma}$ for all $\sigma\in G_f$.
For our intended applications, this ambiguity is not serious, see Remark~\ref{rk:l_fn_A}.

In order to {\em compute} $\{\Omega^\pm_{f^\sigma}\}_{\sigma\in G_f}$ we can find a
Dirichlet character $\psi$ of sign $\pm$ such that $L(f_\psi,1)\ne 0$ and use equation~(11) of
\cite{shimura:periods}.
Alternatively, we can fix some nonzero element $\gamma\in H_1(X_0(N),\Z)[I_f]^\pm$ and define
\[
\Omega^\pm_{f^\sigma}=\sigma(b)\cdot\langle f^\sigma, \gamma\rangle,
\]
for each $\sigma\in G_f$, where $b\in K_f$ is chosen to make~\eqref{norm_periods} hold.
See \cite[Ch.~10]{stein:modform} for a description of how to compute the integration pairing in
practice.

As an application of Theorem~\ref{thm:omegas}, we can prove a relation between the
real and minus period of $A$ and the corresponding periods of $A$ twisted by a
Dirichlet character $\psi$.
\begin{corollary}\label{cor:characters}
Let $\psi$ be a Dirichlet character such that $L(f_\psi,1)\ne0$.
Then there exists $\eta_\psi\in K^*_\psi$ such that
\[
    \Omega^{\sign{\psi}}_{A}\cdot\prod_{\sigma\in
    G_f}\tau\left(\psi^\sigma\right)=\eta_{\psi}\cdot\Omega^+_{A_\psi}.
\]
In particular, if $\psi$ takes values in $\Q$, then there exists $\eta_\psi\in\Q^*$ such
that
\[
    \Omega^{\sign{\psi}}_{A}\cdot\tau\left(\psi\right)^g=\eta_{\psi}\cdot\Omega^+_{A_\psi}.
\]
\end{corollary}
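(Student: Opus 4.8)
The plan is to compare two expressions for a single special value of the $L$-series of $A_\psi$, exactly as in the Remark following Theorem~\ref{thm:omegas}, but now playing $f_\psi$ off against $f$. The inputs are Theorem~\ref{thm:omegas} (applied to $f$), the period relations of Theorem~\ref{thm:shimura}(ii)--(iii), and the rationality statement $L(A_\psi,1)/\Omega^+_{A_\psi}\in\Q^*$ from \cite[Theorem~4.5]{agashe-stein:bsd}. The hypothesis $L(f_\psi,1)\neq0$ is used both to guarantee that $L(A_\psi,1)\neq0$, so that \cite[Theorem~4.5]{agashe-stein:bsd} applies, and to ensure that none of the Galois conjugates of the relevant special value vanishes.

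I would first carry out the case where $\psi$ takes values in $\Q$, which is transparent because then $K_\psi=\Q$, $\bar\psi=\psi$, and the Galois conjugates of the newform $f_\psi$ are exactly the forms $f^\sigma_\psi$ for $\sigma\in G_f$; in particular $A_\psi$ has dimension $g$ and $L(A_\psi,1)=\prod_{\sigma\in G_f}L(f^\sigma_\psi,1)$. Applying Theorem~\ref{thm:shimura}(ii) with the character $\psi$ gives $\beta:=L(f_\psi,1)/(\tau(\psi)\,\Omega^{\sign\psi}_f)\in K_f$, and this is nonzero by hypothesis; Theorem~\ref{thm:shimura}(iii) then identifies $L(f^\sigma_\psi,1)/(\tau(\psi)\,\Omega^{\sign\psi}_{f^\sigma})$ with $\sigma(\beta)$, so each conjugate value is nonzero as well. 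Taking the product over $\sigma\in G_f$ turns $\prod_\sigma\sigma(\beta)$ into $\Norm_{K_f/\Q}(\beta)\in\Q^*$ and yields $L(A_\psi,1)\sim\tau(\psi)^g\prod_{\sigma\in G_f}\Omega^{\sign\psi}_{f^\sigma}$. Now Theorem~\ref{thm:omegas} applied to $f$ replaces the last product by $\Omega^{\sign\psi}_A$ up to a rational factor, while \cite[Theorem~4.5]{agashe-stein:bsd} gives $L(A_\psi,1)\sim\Omega^+_{A_\psi}$; comparing the two shows $\Omega^{\sign\psi}_A\cdot\tau(\psi)^g=\eta_\psi\cdot\Omega^+_{A_\psi}$ with $\eta_\psi\in\Q^*$, which is the ``in particular'' assertion.

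For a general $\psi$ the same four moves apply, with the Gauss sum now genuinely depending on the embedding: Theorem~\ref{thm:shimura}(iii) relates each conjugate special value to $\tau(\psi^\sigma)\,\Omega^{\sign\psi}_{f^\sigma}$, and multiplying over $\sigma\in G_f$ produces the factor $\prod_{\sigma\in G_f}\tau(\psi^\sigma)$ on the left-hand side. The main obstacle is the bookkeeping of conjugates. First, one must match the abelian variety $A_\psi$ attached to $f_\psi$ against the indexing set $G_f$, since in general the field $K_{f_\psi}$ is strictly larger than $K_f$ and the Galois orbit of $f_\psi$ is correspondingly larger than $\{f^\sigma_{\psi^\sigma}\}_{\sigma\in G_f}$; getting the dimensions and the definition of $A_\psi$ to line up is the delicate point, and it is automatic only in the quadratic case treated above. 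Second, because $\beta$ now lies in $K_f\cdot K_\psi$ rather than in $K_f$, the product $\prod_{\sigma\in G_f}\sigma(\beta)$ is no longer a norm down to $\Q$ but rather the partial norm $\Norm_{K_fK_\psi/K_\psi}(\beta)$; this is exactly the mechanism that forces the residual constant $\eta_\psi$ to lie in $K_\psi^*$ rather than merely in $\overline{\Q}^*$, and verifying that all remaining auxiliary factors, in particular the orbit products of Gauss sums, are absorbed into $K_\psi^*$ is where the care is required.
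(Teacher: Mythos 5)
Your argument is correct where it is complete, but it assembles the ingredients differently from the paper. The paper's (very terse) proof invokes only Theorem~\ref{thm:omegas} and Theorem~\ref{thm:shimura}: read literally, it applies Theorem~\ref{thm:omegas} \emph{twice} --- once to $A$ and once to $A_\psi$ --- and then uses Theorem~\ref{thm:shimura}(ii)--(iii) (together with $L(f_\psi,1)\ne0$) to identify the product of Shimura periods of the conjugates of $f_\psi$ with $\prod_\sigma\tau(\psi^\sigma)\Omega^{\sign\psi}_{f^\sigma}$ up to the allowed factor. You instead apply Theorem~\ref{thm:omegas} only to $A$, and close the loop on the twisted side with the rationality of $L(A_\psi,1)/\Omega^+_{A_\psi}$ from \cite[Theorem~4.5]{agashe-stein:bsd}; this is exactly the mechanism of the remark following Theorem~\ref{thm:omegas}, transplanted to $f_\psi$. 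The trade-off: the paper's route is self-contained in the period-lattice framework but requires Theorem~\ref{thm:omegas} to be applicable to $f_\psi$ itself, which is automatic only when $\psi$ is quadratic (so that $f_\psi$ again has trivial nebentypus); your route sidesteps that application at the cost of importing the Agashe--Stein theorem, whose stated scope likewise covers the quadratic-twist case cleanly. Your verification of the quadratic (``in particular'') case is complete and careful --- in particular the observation that $K_{f_{\psi}}=K_f$, so the conjugates of $f_\psi$ are exactly the $f^\sigma_\psi$ and the norm $\prod_\sigma\sigma(\beta)$ lands in $\Q^*$, is the point that makes everything line up. For general $\psi$ you honestly flag, rather than resolve, the two real obstacles (the dimension of $A_{f_\psi}$ versus the indexing by $G_f$, and the partial-norm argument placing the residual constant in $K_\psi^*$); note that the paper's one-line proof does not resolve these either, and all of the paper's applications of Corollary~\ref{cor:characters} are to quadratic $\psi$, where both your argument and the paper's are complete.
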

\begin{proof}
This follows from Theorem~\ref{thm:omegas} and Theorem~\ref{thm:shimura}.
\end{proof}
\begin{remark}
If $A$ is the Jacobian of a hyperelliptic curve of genus at most~2 and $\psi$ is a quadratic Dirichlet
character such that $\psi(N)\ne 0$, then one can show that the statement of
Corollary~\ref{cor:characters} holds without the assumption $L(f_\psi,1)\neq 0$ using quite
concrete arguments.
For elliptic curves and quadratic $\psi$, Corollary~\ref{cor:characters} was already used
in \cite[\S II.11]{mtt}.
Note, however, that their
claim that $\eta_{\psi}\in\{1,2\}$ is incorrect; see \cite{pal:periods}, where the correct
value of $\eta_\psi$ is determined in all cases.
\end{remark}

   \subsection{Modular symbols, measures, and the $p$-adic  $L$-function of a newform}
In this subsection we define the $p$-adic $L$-function associated to $f$, following
\cite{mtt}.
See also the treatment in \cite{pollack:supersingular}.
The definitions for $f^\sigma$, where $\sigma\in G_f$, are entirely analogous.

Recall that we fixed a choice of Shimura periods $\Omega_{f^\sigma}^{\pm}$ above.
The \emph{plus modular symbol map} associated to $f$ is the map
      \begin{align*}[\;]^+_{f}: \Q &\ra K_f\\
      r &\mapsto [r]^+_{f} = -\frac{\pi
i}{\Omega^+_{f}}\left(\int_r^{i\infty} {f}(z) dz + \int_{-r}^{i \infty}
{f}(z) dz \right)\end{align*}
and the \emph{minus modular symbol map} associated to $f$ is the map
      \begin{align*}[\;]^-_{f}: \Q &\ra K_f\\
      r &\mapsto [r]^-_{f} = \frac{\pi
i}{\Omega^-_{f}}\left(\int_r^{i\infty} {f}(z) dz - \int_{-r}^{i \infty}
{f}(z) dz \right).\end{align*}

Note that we have $[0]^{+}_{f} = \frac{L({f},1) }{\Omega^+_{f}}$.
More generally, if $m$ is a positive integer and $\psi$ is a Dirichlet character modulo
$m$, then \cite[\S I.8]{mtt} implies
\begin{equation}\label{twist_interp}
\frac{L(f_{\bar{\psi}},1)}{\Omega^{\sign{\psi}}_f}=\frac{\psi(-1)}{\tau(\psi)}\sum_{u\bmod{m}}\psi(a)\cdot\left[\frac{u}{m}\right]^{\sign{\psi}}_f\in K_f.
\end{equation}

      Let $p$ be a prime of good ordinary reduction for $A$.
We fix, once and for all, a prime $\wp$ of $K_f$ lying above $p$.
The modular symbol maps allow us to define two measures on $\Z_p^{\times}$ which depend on
the unit root of the polynomial $h(x) := x^2 - a_p x + p \in (K_f)_{\wp}[x]$, where $(K_f)_\wp$ is
the completion of $K_f$ at $\wp$. The construction of the $p$-adic $L$-function depends, in turn, on these measures.
Since $A$ is ordinary at $p$, the polynomial $h$ has a unique unit
root $\alpha \in (K_f)_{\wp}$, i.e., a root with $\ord_\wp(\alpha) = 0$.

Using the modular symbol maps $[\;]^\pm_{f}$, we define two measures
$\mu^{\pm}_{{f},\alpha}$ on
$\Z_p^{\times}$
by $$\mu^{\pm}_{{f},\alpha}(a + p^n \Z_p) =
\frac{1}{\alpha^n}\left[\frac{a}{p^n}\right]^\pm_{f} -
\frac{1}{\alpha^{n+1}}\left[\frac{a}{p^{n-1}}\right]^\pm_{f}.$$

      For a continuous character $\chi$ on $\Z_p^{\times}$ with values in $\C_p$, we may
integrate $\chi$ against $\mu_{f,\alpha}$.  Following \cite[\S I.13]{mtt}, we write $x \in \Z_p^{\times}$ as $\omega(x) \cdot \langle x \rangle$ where $\omega(x)$ is a $(p-1)$-st root of unity and $\langle x \rangle$ belongs to $1 + p \Z_p$. The element $\omega$ is known as the Teichm\"{u}ller character.

      We define the analytic $p$-adic $L$-function associated to $f$ by
\begin{equation}\label{intchar}L_p(f,s) = \int_{\Z_p^{\times}}  \langle x \rangle^{s-1} \,
d\mu^+_{f,\alpha}(x) \qquad\text{for all}\; s \in \Z_p,\end{equation}
where by $\langle x \rangle^{s-1}$ we mean $\exp_p((s-1)\cdot \log_p \langle x \rangle$
and $\exp_p$ and $\log_p$ are the $p$-adic exponential and logarithm, respectively.  The
function $L_p(f,s)$ extends to a locally analytic function in $s$ on the disc defined by $| s - 1| < 1$, as in the first proposition of  \cite[\S I.13]{mtt}.

Let ${}_{\infty}G$ be the Galois group $\Gal(\Q(\mu_{p^{\infty}})/\Q)$. The cyclotomic character
$\kappa:{}_{\infty}G \ra \Z_p^{\times}$ induces an isomorphism from ${}_{\infty}G$
to $\Z_p^{\times}$ that sends a topological generator $\gamma$ in ${}_{\infty}G^{(p-1)}$
to a generator $\kappa(\gamma)$ of $1 + p\Z_p^{\times}$. This identification allows us to
give a series expansion of the $p$-adic $L$-function in terms of $T = \kappa(\gamma)^{s-1} - 1$. That is, we have
\begin{equation}\label{series}\mathcal{L}_p(f,T) = \int_{\Z_p^{\times}}
(1+T)^{\frac{\log_p(\langle x \rangle)}{\log_p(\kappa(\gamma))}} \,d\mu^+_{f,\alpha}(x).\end{equation}

 Now for each $n \geq 1$, let $P_n(f,T)$ be the following polynomial:

\begin{equation}\label{rs}P_n(f,T) = \sum_{a=1}^{p-1} \left( \sum_{j=0}^{p^{n-1}-1}
\mu^+_{f,\alpha}   \left(\omega(a)(1+p)^j + p^n\Z_p\right) \cdot (1+T)^j \right)  .\end{equation}

We have that \eqref{rs} gives us a Riemann sum for the integral \eqref{series}, by summing over residue classes mod $p^n$; in other words:

 \begin{proposition}
        We have that the $p$-adic limit of these polynomials is the $p$-adic
        $L$-series:
        $$
           \lim_{n\to\infty} P_n(f,T) = \cL_p(f, T).
        $$
        \end{proposition}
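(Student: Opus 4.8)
The plan is to recognize the polynomial $P_n(f,T)$ as a Riemann sum for the integral in \eqref{series} defining $\cL_p(f,T)$, and then to deduce the limit from the standard convergence of Riemann sums against a \emph{bounded} $p$-adic measure. Write $F_T(x)=(1+T)^{\log_p\langle x\rangle/\log_p(\kappa(\gamma))}$ for the integrand, so that $\cL_p(f,T)=\int_{\Z_p^{\times}}F_T\,d\mu^+_{f,\alpha}$. The first step is to record the partition underlying these sums: each $x\in\Z_p^{\times}$ factors uniquely as $x=\omega(x)\langle x\rangle$ with $\omega(x)$ a $(p-1)$-st root of unity and $\langle x\rangle\in 1+p\Z_p$; the root of unity equals $\omega(a)$ for a unique $a\in\{1,\dots,p-1\}$, and since $1+p$ topologically generates $1+p\Z_p$ and has order $p^{n-1}$ in $(1+p\Z_p)/(1+p^n\Z_p)$, the factor $\langle x\rangle$ is congruent modulo $p^n$ to a unique $(1+p)^j$ with $0\le j<p^{n-1}$. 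Hence the $(p-1)p^{n-1}$ sets $\omega(a)(1+p)^j+p^n\Z_p$ are precisely the residue classes of $\Z_p^{\times}$ modulo $p^n$, matching $|(\Z/p^n)^{\times}|$.

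Next I would evaluate the integrand at the natural representatives. Taking $x_{a,j}=\omega(a)(1+p)^j$ as the representative of its class, one has $\langle x_{a,j}\rangle=(1+p)^j$; with the standard choice $\kappa(\gamma)=1+p$ the exponent $\log_p\langle x_{a,j}\rangle/\log_p(\kappa(\gamma))$ equals $j$, so $F_T(x_{a,j})=(1+T)^j$. Therefore \eqref{rs} reads
\[
P_n(f,T)=\sum_{a=1}^{p-1}\sum_{j=0}^{p^{n-1}-1}F_T(x_{a,j})\cdot\mu^+_{f,\alpha}\bigl(\omega(a)(1+p)^j+p^n\Z_p\bigr),
\]
which is exactly the Riemann sum attached to the partition modulo $p^n$ and these representatives.

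Finally I would prove convergence, which rests on two ingredients. First, $\mu^+_{f,\alpha}$ is a bounded measure: since $p$ is ordinary the unit root satisfies $\ord_\wp(\alpha)=0$, so $|\alpha^{-n}|_\wp=1$, while the modular-symbol values $[a/p^n]^+_f$ are $\wp$-adically bounded (they are governed by the finitely generated integral structure on $H_1$), giving $\|\mu\|:=\sup_{n,a}|\mu^+_{f,\alpha}(a+p^n\Z_p)|_\wp<\infty$; this is the boundedness underlying the local analyticity cited from the first proposition of \cite[\S I.13]{mtt}. Second, viewing $F_T$ coefficientwise as $x\mapsto(1+T)^{s_x}=\sum_k\binom{s_x}{k}T^k$ with $s_x=\log_p\langle x\rangle/\log_p(\kappa(\gamma))\in\Z_p$, the map $F_T$ is uniformly continuous, since $x\mapsto s_x$ is continuous and each $s\mapsto\binom{s}{k}$ is continuous. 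Writing $x_C$ for the chosen representative of a class $C$ modulo $p^n$, the identity
\[
\cL_p(f,T)-P_n(f,T)=\sum_{C}\int_C\bigl(F_T(x)-F_T(x_C)\bigr)\,d\mu^+_{f,\alpha}(x),
\]
together with the ultrametric inequality, bounds the coefficient of each $T^k$ by $\|\mu\|\cdot\max_C\sup_{x\in C}|F_T(x)-F_T(x_C)|$, which tends to $0$ as $n\to\infty$ because the classes $C$ shrink and $F_T$ is uniformly continuous. This gives the claimed $p$-adic limit. I expect the main obstacle to be the two genuinely substantive points: the boundedness of $\mu^+_{f,\alpha}$ (the one arithmetic input, valid precisely because of ordinarity) and making the convergence precise in the correct topology on $(K_f)_\wp[[T]]$, i.e.\ controlling $\binom{s_x}{k}-\binom{s_{x_C}}{k}$ coefficientwise from the congruence $s_x\equiv s_{x_C}\pmod{p^{\,n-1}}$.
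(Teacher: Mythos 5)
Your proof is correct, and its skeleton is the same as that of the argument the paper actually relies on: the paper's own proof is a one-line deferral to \cite[Proposition~3.1]{stein-wuthrich:shark}, whose proof is precisely this Riemann-sum analysis --- the partition of $\Z_p^{\times}$ into the classes $\omega(a)(1+p)^j+p^n\Z_p$, boundedness of $\mu^+_{f,\alpha}$ (ordinarity makes $\alpha$ a $\wp$-adic unit, and the modular symbols have bounded denominators), and coefficientwise control of $(1+T)^{s_x}$ through binomial coefficients. You also correctly make explicit a point the paper leaves implicit, namely that the identification of \eqref{rs} as a Riemann sum for \eqref{series} presupposes the choice $\kappa(\gamma)=1+p$. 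The one substantive difference is that your convergence step is soft: you conclude from compactness and uniform continuity that the oscillation terms vanish in the limit, which proves the proposition as stated but yields no rate of convergence. The point of the paper's citation is precisely the explicit upper bounds: from $s_x\equiv s_{x_C}\pmod{p^{\,n-1}}$ one gets $\ord_\wp\bigl(\tbinom{s_x}{k}-\tbinom{s_{x_C}}{k}\bigr)\ge (n-1)-\ord_p(k!)$, and hence $\ord_\wp(a_{n,k}-a_k)\ge (n-1)-\ord_p(k!)-c$, where $c$ bounds the denominators of the values $[a/p^n]^+_f$; it is this quantitative form, not mere convergence, that makes $\cL_p(f,T)$ provably computable to any desired precision and is what the numerical verification in \S4 and \S5 rests on. To fully match the paper you would only need to replace your appeal to uniform continuity by this one-line Lipschitz estimate for $\tbinom{s}{k}$; everything else in your argument is already in place.
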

        This convergence is coefficient-by-coefficient, in the sense that
        if
        $P_{n}(f,T) = \sum_j a_{n,j} T^j$ and
        $\cL_p(f,T) = \sum_j a_j T^j$, then
        $$
        \lim_{n \to \infty} a_{n,j} = a_j.
        $$
        \begin{proof}This is a straightforward generalization of \cite[Proposition~3.1]{stein-wuthrich:shark}.
The upper bounds we obtain are the same as the upper bounds
in the proof of \cite[Proposition~3.1]{stein-wuthrich:shark}, which enables us to compute
the $p$-adic $L$-series to any desired precision.
\end{proof}
We define the {\em $p$-adic multiplier} $\epsilon_p(f)$ by
\[
\epsilon_p(f)=\left(1-\alpha^{-1}\right)^2.
\]

The $p$-adic $L$-series of $f$ satisfies an interpolation property with
respect to the complex $L$-series of $f$ \cite[\S I.14]{mtt}:
\begin{equation}\label{interp_f}\mathcal{L}_p(f,0) = L_p(f,1) =  \int_{\Z_p^{\times}}  \,
d\mu^+_{f,\alpha} = \epsilon_p(f)\cdot[0]^+_f=
\epsilon_p(f) \cdot \frac{L(f,1)}{\Omega^+_f}.\end{equation}

        \subsection{$p$-adic $L$-function associated to $A$}\label{sec:abelian}
The $p$-adic $L$-series $\cL_p(f,s)$ associated to $f$ that we constructed in the
previous subsection depends on the Shimura periods $\Omega^+_{f^\sigma}$ and on the prime $\wp$.
In the present section we define a
$p$-adic $L$-function associated to the abelian variety $A$ which is independent of the
choices of the Shimura periods (provided they satisfy~\eqref{norm_periods}) and of $\wp$.

The abelian variety $A $ has an associated complex $L$-series, given by $$L(A,s) =
\prod_{{\sigma\in G_f}} L(f^\sigma,s),$$
which can be extended analytically to the whole complex plane.

We define the {\em $p$-adic $L$-function associated to $A$} by
\begin{equation}\label{Lpf}
L_p(A,s)=\prod_{\sigma\in G_f} L_p(f^\sigma,s)
\end{equation}
for $s\in\Z_p$.
\begin{remark}\label{rk:l_fn_A}
    Since we require our Shimura periods $\{\Omega^\pm_{f^\sigma}\}_{\sigma\in G_f}$ to satisfy
    \eqref{norm_periods}, the $p$-adic $L$-function $L_p(A,s)$ does not depend on the
    choice of period for each $f^\sigma$, although the individual $p$-adic $L$-functions
    $L_p(f^\sigma, s)$ do.
\end{remark}
Furthermore, we define
\begin{equation}\label{cLpf}
\cL_p(A,T)=\prod_{\sigma\in G_f} \cL_p(f^\sigma,T)
\end{equation}
and the {\em $p$-adic multiplier of $A$} by
$$\epsilon_p(A)=\prod_{\sigma\in G_f} \epsilon_p(f^\sigma).$$
For $r\in\Q$ we set
\begin{equation}\label{modsym_abvar}[r]^\pm_A:=\prod_{\sigma\in G_f}[r]^\pm_{f^\sigma}.\end{equation}
The following result follows immediately from~\eqref{interp_f}:
\begin{corollary}Let $\{\Omega^\pm_{f^\sigma}\}_{\sigma\in G_f}$ be a set of Shimura periods satisfying \eqref{norm_periods}.
  Then we have
\begin{equation}\label{interpol}
    \mathcal{L}_p(A,0) = L_p(A,1) =  \epsilon_p(A)\cdot[0]^+_A= \epsilon_p(A) \cdot
\frac{L(A,1)}{\Omega^+_A}.
\end{equation}
\end{corollary}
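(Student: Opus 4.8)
The plan is to derive the displayed chain of equalities \eqref{interpol} for $A$ by multiplying together, over all $\sigma\in G_f$, the already-established interpolation formula \eqref{interp_f} for each Galois conjugate $f^\sigma$. Concretely, for each $\sigma\in G_f$ Theorem~\ref{thm:shimura} and the construction of the previous subsection give
\[
\mathcal{L}_p(f^\sigma,0)=L_p(f^\sigma,1)=\epsilon_p(f^\sigma)\cdot[0]^+_{f^\sigma}=\epsilon_p(f^\sigma)\cdot\frac{L(f^\sigma,1)}{\Omega^+_{f^\sigma}}.
\]
The proof then consists of taking the product of these four-term identities over $\sigma\in G_f$ and recognizing each factor in the resulting product as the corresponding quantity defined for $A$.

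First I would take the product of the leftmost two terms: by definition \eqref{cLpf} we have $\prod_\sigma \mathcal{L}_p(f^\sigma,0)=\mathcal{L}_p(A,0)$, and by \eqref{Lpf} we have $\prod_\sigma L_p(f^\sigma,1)=L_p(A,1)$, yielding the first equality $\mathcal{L}_p(A,0)=L_p(A,1)$. Next I would handle the third term: by the definition of the $p$-adic multiplier $\epsilon_p(A)=\prod_\sigma\epsilon_p(f^\sigma)$ and the definition \eqref{modsym_abvar} of $[0]^+_A=\prod_\sigma[0]^+_{f^\sigma}$, the product of the terms $\epsilon_p(f^\sigma)\cdot[0]^+_{f^\sigma}$ collapses to $\epsilon_p(A)\cdot[0]^+_A$. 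Finally, for the rightmost term, multiplying the factors $\epsilon_p(f^\sigma)\cdot L(f^\sigma,1)/\Omega^+_{f^\sigma}$ gives $\epsilon_p(A)\cdot\frac{\prod_\sigma L(f^\sigma,1)}{\prod_\sigma\Omega^+_{f^\sigma}}$, and here I would invoke the two facts stated earlier: the numerator $\prod_\sigma L(f^\sigma,1)$ equals $L(A,1)$ by the defining factorization $L(A,s)=\prod_\sigma L(f^\sigma,s)$, and the denominator $\prod_\sigma\Omega^+_{f^\sigma}$ equals $\Omega^+_A$ by the normalization \eqref{norm_periods}.

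The one point that genuinely requires the hypothesis of the corollary is the denominator: the factorization $\prod_\sigma\Omega^+_{f^\sigma}=\Omega^+_A$ holds on the nose (not merely up to a rational factor, as in Theorem~\ref{thm:omegas}) precisely because the corollary restricts to Shimura periods satisfying \eqref{norm_periods}. This is what makes $\mathcal{L}_p(A,0)$ a well-defined invariant of $A$ rather than of the chosen periods, as recorded in Remark~\ref{rk:l_fn_A}. The main (and essentially only) obstacle is therefore bookkeeping: one must check that the product over $G_f$ distributes cleanly across all four expressions in \eqref{interp_f} simultaneously and that each resulting aggregate matches the $A$-level definition \eqref{cLpf}, \eqref{Lpf}, \eqref{modsym_abvar}, together with $\epsilon_p(A)=\prod_\sigma\epsilon_p(f^\sigma)$. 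Since no convergence or analytic subtleties intervene — the identities are finite products of equalities of $p$-adic numbers — the argument is a direct consequence of the definitions and the normalization \eqref{norm_periods}, exactly as asserted by the word ``immediately'' in the statement.
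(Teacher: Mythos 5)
Your proof is correct and is exactly the argument the paper intends: the corollary carries no separate proof precisely because it follows by multiplying the interpolation formula \eqref{interp_f} for each conjugate $f^\sigma$ over $G_f$ and matching the resulting products with the definitions \eqref{cLpf}, \eqref{Lpf}, \eqref{modsym_abvar}, $\epsilon_p(A)=\prod_\sigma\epsilon_p(f^\sigma)$, and the normalization \eqref{norm_periods}. You also correctly isolate the one substantive point, namely that \eqref{norm_periods} (rather than the up-to-rational-factor statement of Theorem~\ref{thm:omegas}) is what makes the final equality $\prod_\sigma\Omega^+_{f^\sigma}=\Omega^+_A$ hold on the nose.
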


    \subsection{Quadratic twists and normalization}\label{sec:norm}

    Modular symbols can be computed up to a rational multiple purely
    algebraically (cf. \cite{stein:modform}) using (mostly sparse)
    linear algebra over fields.  Computing the exact modular symbol
    (not just up to a rational factor) requires doing linear
    algebra over $\Z$, which is much slower.  In this section we
    describe a method to determine the correct normalization of the
    modular symbol map by using special values of quadratic twists,
    which is potentially much faster than using linear algebra over
    $\Z$.

    In order to find the correct normalization, we use the fact that the $p$-adic
$L$-series associated to $A$ interpolates the
    Hasse-Weil $L$-function $L(A, s)$ associated to $A$ at special values.
    Algorithms for the computation of $\frac{L(A,1)}{\Omega^+_A},\, L(A, 1)$ and $\Omega^+_A$
    are discussed in \cite{agashe-stein:bsd, empirical, stein:modform}.
    So if $L(A,1)\ne 0$, then we can find the correct normalization factor
    $\delta^+$ for $\cL_p(A,T)$  by computing $[0]^+_A$ and comparing it to
    $\frac{L(A,1)}{\Omega^+_A}$.  Note that the quotient $\frac{L(A,1)}{\Omega^+_A}$ can
    be computed purely algebraically as a certain lattice index, without computing either of the real
    numbers $L(A,1)$ or $\Omega^+_A$.

In order to discuss the strategy for the case $L(A,1)=0$, we begin by considering
modular symbols associated to quadratic twists of $f$.
     Let $D$ be a fundamental discriminant of a quadratic number field such that $\gcd(pN, D)
    =1$ and let $\psi$ denote the Dirichlet character associated to $\Q(\sqrt{D})$.
We assume that $L(f_\psi,1)\ne 0$.
    Using Corollary~\ref{cor:characters} we see that there exists
$\eta_\psi\in K_f^*$ such that
\[\Omega^{\sign(D)}_{A}\cdot D^{g/2}=\eta_{\psi}\cdot\Omega^+_{A_\psi}.\]
    A computation analogous to \cite[\S3.7]{stein-wuthrich:shark} yields
    \begin{equation}\label{twisted_symbol}
[r]^+_{A_\psi}=    \prod_{\sigma\in
G_f}[r]^+_{f^\sigma_\psi}=\frac{\sign(D)^g}{\eta_{\psi}}\prod_{\sigma\in
G_f}\sum^{|D|-1}_{u=1}\psi(u)\cdot\left[r+\frac{u}{D}\right]^{\sign(D)}_{f^\sigma}.
    \end{equation}
    Therefore we can compute the product of the plus modular symbols for $f_\psi$ and its
conjugates in terms of modular symbols for  $f$ and its conjugates.
    The same holds for the $p$-adic $L$-function of the twist $A_\psi$ of $A$ by $\psi$.

Now suppose that $\cL_p(A, 0)=0$ and we want to find the correct normalization factor for
$[\;]^+_f$ .
We can use that for a fundamental discriminant $D$ with Dirichlet character $\psi$, the modular
    symbols $[\;]^+_f$ and $[\;]^{\sign(D)}_{f_\psi}$ are related by \eqref{twisted_symbol}.
      Hence the same normalization factor $\delta^+$  will yield the correct value
$[r]^+_{A_\psi}$ for all $D>0$ such that $\gcd(pN, D)=1$.

    We can compute $\delta^+$ by finding a fundamental discriminant $D>0$ such that $\gcd(pN,
    D)=1$ and such that $A_\psi$ has analytic rank~0 over $\Q$, where $\psi$ is the quadratic
character associated to $\Q(\sqrt{D})$, and comparing $ [0]^+_{A_\psi}$
    to
    \[
        \frac{\eta_\psi\cdot L(A_\psi,1)}{D^{g/2}\cdot\Omega^+_A}.
    \]
    An analogous approach can be used to find the correct normalization factor $\delta^-$ for
    the minus modular symbol.
It follows from \cite{bump-friedberg-hoffstein} that in both cases a
fundamental discriminant $D$ as above always exists.
    \begin{remark}\label{rk:find_eta}
    Suppose that $A$ is the Jacobian of a hyperelliptic curve $X/\Q$ of genus $g$ given by an equation
    \[
    y^2+h_1(x)y=f_1(x)
    \]
    which is minimal in the sense of \cite{liu:entiers}.
    Often it is not necessary to compute $\Omega^{\sign D}_{A_\psi}$ (or even $\Omega^+_A$) to
    compute $\eta_\psi$.
Let $  y^2+h_2(x)y=f_2(x)$ be a minimal equation for $X_\psi$,
and consider the differentials $\omega_i=\frac{x^idx}{2y+h_1(x)}$ on $X$
and $\omega'_i=\frac{x^idx}{2y+h_2(x)}$ on $X_{\psi}$.
 It frequently
happens that $(\omega_1,\ldots,\omega_g)$ is a basis of the integral~1-forms on $A$ and
$(\omega'_1,\ldots,\omega'_g)$ is also a basis for the integral 1-forms on $A_\psi$.
In that case we always have $\eta_\psi\in\{\pm1\}$; this follows from
\cite[\S3.5]{empirical}.
It is easy to determine the sign using a straightforward generalization of
\cite[\S1.3]{liu:conducteur}.
More generally, a similar approach can also be used to compute $\eta_\psi$ directly if we
know how to express a basis for the integral~1-forms in terms of
$\omega_1,\ldots,\omega_g$.
    \end{remark}

    \subsection{The algorithm}\label{sec:algorithm}
    We implemented the following algorithm for the computation of the $p$-adic $L$-series of $A$ in {\tt Sage} \cite{sage}.
    \begin{algorithm}[$p$-adic $L$-series]\label{alg:padic_lseries}$\quad$\\
    \textbf{Input:} Good ordinary prime $p$, $A$ modular abelian variety attached to newform $f$, precision $n$.\\
    \textbf{Output:} $n$th approximation to the $p$-adic $L$-series $\mathcal{L}_p(A,T)$.
    \begin{enumerate}
    \item Fix a prime $\wp$ of the field $K_f$ generated by the Hecke eigenvalues of $f$ lying above $p$ and compute the unit root $\alpha$ of $h(x)\in (K_f)_\wp[x]$.
    \item Find a fundamental discriminant $D>0$ such that $\gcd(pN,D)=1$ and $A_\psi$ has
analytic rank~0     over $\Q$, where $\psi$ is the quadratic character associated to $\Q(\sqrt{D})$.
    \item Compute $\eta_\psi$.
    \item For each $\sigma\in G_f$, define measures $\mu^{\pm}_{{f^\sigma},\alpha}$.
    \item For each $\sigma\in G_f$, compute $[0]^+_{f^\sigma_\psi}$ and set
$[0]^+_{A_\psi}=\prod_{\sigma\in G_f}[0]^+_{f^\sigma_\psi}$.
    \item Compute $\frac{L(A_\psi,1)}{D^{g/2}\cdot\Omega^+_A}$ and deduce the normalization factor $\delta^+$ using \eqref{interpol}.
\item For each $\sigma$, compute $P_{n}(f^\sigma,T)$.
    \item Return $\delta^+\cdot\prod_\sigma P_n(f^\sigma,T)$.
        \end{enumerate}
    \end{algorithm}

        \begin{remark}Note that Step (7) of Algorithm~\ref{alg:padic_lseries} is exponential in $p$; see the following subsection for an alternative method.\end{remark}

    \subsection{Overconvergent modular symbols}\label{sec:OMS}
   Here we outline an alternative method for Step (7) of Algorithm~\ref{alg:padic_lseries}.
    This method is due to Pollack and Stevens \cite{pollack-stevens:overconv} and has
    running time polynomial in $p$ and in the desired number of digits of
    precision.

    The idea is to use Stevens's overconvergent modular symbols; these
    are constructed using certain $p$-adic distributions, and they can
    be specialized to classical modular symbols.  More precisely, any
    classical modular eigensymbol can be lifted uniquely to an
    overconvergent modular Hecke-eigensymbol, which can be
    approximated using finite data.
    Note that in order to do this, we first have to $p$-stabilize the symbol
    to a symbol for $\Gamma_0(Np)$ which is an eigensymbol away from $p$.

    The plus modular symbol we start
    with is only determined up to multiplication by a scalar, so the
    corresponding overconvergent eigenlift is also only determined up
    to multiplication by a scalar.
    Hence we cannot dispense with Steps (2), (3), (5) and (6) of
    Algorithm~\ref{alg:padic_lseries}.

    Once this desired lift has been computed, writing down the $p$-adic $L$-series
    associated to the modular symbol and its quadratic twists by $\psi$ for suitable $D$ is rather easy, cf.
    \cite[\S9]{pollack-stevens:overconv}.
    Together with David Roe and Robert Pollack, we have implemented Algorithm~\ref{alg:padic_lseries} with Step (7) replaced by the
algorithm from \cite{pollack-stevens:overconv} in {\tt Sage} as well, building on an implementation
    due to Pollack.

    \section{The $p$-adic height pairing of Coleman and Gross and $p$-adic regulators}
We now shift our attention to the remaining $p$-adic quantity appearing in Conjecture~\ref{pbsd}, the $p$-adic regulator. To discuss $p$-adic regulators, we begin, in this section, by describing one construction of the global $p$-adic height pairing relevant to our setting. We give an algorithm to compute the height pairing in the case when the abelian variety $A$ is the Jacobian of a hyperelliptic curve and show how we use it to compute $p$-adic regulators.

Let $A$ be an abelian variety defined over a number field $K$ and
let $A^{\vee}$ denote the dual abelian variety to $A$.
There are several definitions of $p$-adic height pairings on abelian varieties in the
literature. The work of Schneider \cite{schneider:height1} and Mazur and Tate \cite{mazur-tate:canonical}
gave the first constructions of $p$-adic height pairings on abelian varieties defined over
number fields. This was extended to motives by Nekov{\'a}{\v{r}} \cite{MR1263527}. There
are also more specialized definitions: in the case when $\dim A = 1$, $K = \Q$, and $p$ is
a prime of good, ordinary reduction, Mazur, Stein, and Tate \cite{mazur-stein-tate:padic}
gave an explicit formula for the $p$-adic height which relies on an understanding of the
$p$-adic sigma function. When $A$ is the Jacobian of a curve, Coleman and Gross
\cite{coleman-gross} described the $p$-adic height pairing on $A$ as a sum of local height
pairings.  Note that in the range where all of these constructions apply, they are known
to be equivalent by the work of Coleman \cite{MR1091621} and Besser \cite{Bes99a} (where the equivalence is possibly up to sign, e.g., in the supersingular case).  For all of these definitions, the $p$-adic height pairing is known to be bilinear and,
     in the principally polarized case, symmetric.

Let $p$ be a prime number such that $A$ has good ordinary reduction at all primes of $K$
above $p$.  The canonical $p$-adic height pairings are (up to nontrivial scalar multiple) in one-to-one correspondence with certain $\Z_p$-extensions $L/K$: those $L$ with finitely many ramified primes which are primes of ordinary reduction for $A$. For example, when $K = \Q$, there is, up to nontrivial scalar multiple, one canonical $p$-adic height pairing, the \emph{cyclotomic} $p$-adic height.

Fix a $p$-adic height pairing 
\begin{align*} h: A \times A^{\vee} &\longrightarrow \Q_p \\
     (P, Q) &\mapsto h(P,Q).\end{align*}
Now we want to define the $p$-adic regulator with respect to $h$.
In the literature, one usually defines this quantity as the determinant of the height
pairing matrix with respect to a set of generators of the free part of $A(K)$ and
$A^\vee(K)$, respectively.
This, however, is only well-defined up to sign.
Since in Iwasawa theory one is typically only interested in results up to a $p$-adic unit, this is usually
not a serious problem, but in order to state Conjecture~\ref{pbsd}, we need a canonical well-defined $p$-adic
regulator.
First note that if $\phi:A\to A^\vee$ is an isogeny and $P_1,\ldots,P_r \in A(K)$ map to a
basis of $A(K)/\tors$ then
$$\Reg_{p,\phi}(A/K) := \det\left(\left(h(P_i,\phi(P_j))\right)_{i,j}\right)$$ does not depend on the choice of
$P_1,\ldots,P_r$.
Also recall that if $c \in \Pic_{A/K}$ is ample, then the map $\phi_c :A \to \Pic^0_{A/K} \cong
A^\vee$ which
maps $P \in A$ to $t_P^*(c) \otimes c^{-1}$, where $t_P$ is the translation-by-$P$ map, is an isogeny.
We first proof an elementary lemma.
\begin{lemma}\label{lem:oriented}
  Let $V$ and $W$ be finite-dimensional $\R$-vector spaces
  equipped with a non-degenerate $\R$-bilinear pairing $B:V\times W \to \R$.
  Let $\psi, \psi' : V \rightrightarrows W$ be isomorphisms
  such that the pullbacks $b$ (resp. $b'$) of $B$ along $1 \times \psi$ (resp. $1 \times
  \psi'$) are symmetric and positive-definite.
  Then $\psi^{-1}\circ \psi' \in \GL(V)$ has positive determinant.
\end{lemma}
\begin{proof}
  Setting $\theta := \psi^{-1}\circ \psi'$, we have that $b'$ is the pullback of $b$ along $1 \times \theta$.
  Because $b'$ is symmetric, $\theta$ is self-adjoint with respect to $b$.
  The spectral theorem implies that $\theta$ is diagonalizable over $\R$.
  All eigenvalues are positive because $b'$ is positive definite, which proves the lemma.
\end{proof}

\begin{corollary}\label{cor:reg}
  Let $c,c'\in \Pic_{A/K}(K)$ be ample and symmetric and let
  $\phi_c,\phi_{c'}:A\rightrightarrows A^\vee$ be the corresponding
    isogenies.
    Set $m=[A^\vee(K)/\tors:\phi_c(A(K)/\tors)]$ and
$m'=[A^\vee(K)/\tors:\phi_{c'}(A(K)/\tors)]$.
    Then we have
    $$\frac{1}{m}\Reg_{p,\phi_c}(A/K)=
    \frac{1}{m'}\Reg_{p,\phi_{c'}}(A/K).$$
\end{corollary}
\begin{proof}
  It is clear that the claimed equality holds up to sign.
  Hence it suffices to show that if $P_1,\ldots,P_r\in A(K)$ (resp.  $Q_1,\ldots,Q_r \in
  A^\vee(K)$) map to generators of $A(K)/\tors$ (resp. $A^\vee(K)/\tors)$), and if
$$\phi_c(P_i)=\sum^r_{j=1}m_{ij}Q_j\quad\textrm{and}\quad
\phi_{c'}(P_i)=\sum^r_{j=1}m'_{ij}Q_j,$$
then the determinants of the matrices $\left(\left(m_{ij}\right)_{1\le i,j\le
r}\right)$ and $\left(\left(m'_{ij}\right)_{1\le i,j\le r}\right)$
have the same sign.
In other words, it suffices to show that $\psi^{-1}\circ\psi'$ has positive determinant, where
$\psi$ (resp. $\psi'$) is $\phi_c$ (resp. $\phi_{c'}$), extended to an isomorphism
$A(K)\otimes\R\rightarrow A^\vee(K)\otimes\R$.
But by the proof of~\cite[Theorem~B.5.8]{hindry-silverman:diophantine},
the pullback of the canonical N\'eron-Tate height pairing $A(K)\times A^\vee(K) \to \R$ along $1\times
\phi_c$ (resp. $1\times \phi_{c'}$) is the N\'eron-Tate height pairing $A(K) \times A(K)
\to \R$ with respect to $c$ (resp. $c'$).
Therefore we can apply Lemma~\ref{lem:oriented} to $\psi$ and $\psi'$ and the result follows.

\end{proof}

\begin{definition}\label{def:reg}Let  $A$ be an abelian variety defined over a number field $K$ and
let $A^{\vee}$ denote its dual.
Fix some ample and symmetric $c\in\Pic_{A/K}(K)$ and let $\phi_c:A\to A^\vee$ denote the corresponding isogeny.
The \emph{$p$-adic regulator}  of $A$, denoted
$\Reg_p(A/K)$, is defined by
$$\Reg_p(A/K):=\frac{1}{[A^\vee(K)/\tors:\phi_c(A(K)/\tors)]}\Reg_{p,\phi_c}(A/K)$$
 \end{definition}
 The $p$-adic regulator is well-defined by Lemma~\ref{cor:reg}.
         It has been conjectured by Schneider \cite{schneider:height1} that the $p$-adic
     height pairing is nondegenerate, but in contrast to the classical case of
     N\'eron-Tate heights this is not known in general.

Among the aforementioned definitions of the $p$-adic height pairing, the Coleman and Gross construction of the $p$-adic height pairing is fairly explicit in nature, and for that reason, lends itself nicely to computation.
Thus we take it as our working definition of the $p$-adic height. We start by giving a brief overview of the work of Coleman and Gross.

   Suppose $X/K$ is a curve defined over a number field $K$, with
    good reduction at primes above $p$.
    To define the $p$-adic height pairing
    \begin{equation*}
      h: \Div^0(X) \times \Div^0(X) \to \Q_p\;,
    \end{equation*}
    where $\Div^0(X)$ denotes the divisors on $X$ of degree zero, one needs the following data:
    \begin{itemize}
    \item  A ``global log''- a continuous idele class character $\ell: \mathbb{A}_K^\ast/K^\ast \to \Q_p$\;.
    \item For each $v\mid{}p$ a choice of a subspace $W_v\subset\hdr^1((X\otimes
      K_v)/K_v)$ complementary to the space of holomorphic forms.
    \end{itemize}
  We require that the
    local characters $\ell_v$ induced by $\ell$, for $v\mid{}p$, are ramified
    in the sense that they do not vanish on the units in $K_v$.  From $\ell$ one deduces the following data:
    \begin{itemize}
    \item For any place $v\ndiv p$ we have $\ell_v(\O_{K_v}^\ast)=0$ for
      continuity reasons, which implies that $\ell_v$ is completely
      determined by the number $\ell_v(\pi_v)$, where $\pi_v$ is any
      uniformizer in $K_v$.
    \item For any place $v\mid{}p$ we can decompose $\ell_v$ as a composition
      \begin{equation}\label{tdefnd}
        \xymatrix{
          {\O_{K_v}^\ast}  \ar[rr]^{\ell_v} \ar[dr]^{\log_v} & &   \Q_p\\
          & K_v\ar[ur]^{t_v}
        }
      \end{equation}
      where $t_v$ is a $\Q_p$-linear map. Since we assume that $\ell_v$
      is ramified it is then possible to extend $\log_v$ to $ \log_v : K_v^\ast \to K_v$ in such a way that the diagram remains commutative.
    \end{itemize}

We will later need to choose a branch of the $p$-adic logarithm, since the Coleman integral of a form with residue depends on such a choice. We will fix this choice for the computation of the local height pairing to be the one determined above.

    Let us now describe the $p$-adic height pairing $h(D,{E})$ for a pair of degree zero
    divisors ${D}$ and ${E}$ with disjoint support. The height pairing is a sum of local
    terms $$h({D},{E})= \sum_v h_v({D},{E})$$ over all finite places
    $v$. The local terms depend only on the completion at $v$ of $K$. Thus,
    let $K_v$ be the completion of $K$ at a place $v$, with valuation ring
    $\cO$, uniformizer $\pi$ and let $k_v = \cO/\pi\cO$ be the residue
    field, with order $q.$ Let $C$ denote the curve $X$ over the local field $K_v$. We shall assume that $C$ has a $K_v$-rational
    point and that $C$ has good reduction at $\pi$.

    \begin{proposition}\label{intersect} If $\chr k_v \neq p$, there exists a unique function $\langle D, E \rangle$ defined for all $D,E
      \in \Div^0(C)$ of disjoint support that is continuous, symmetric,
      bi-additive, takes values in $\Q_p$, and satisfies
      \begin{equation}
      \langle (f),  E\rangle = \ell_v(f(E))\label{heighprinc}
    \end{equation}
     for $f \in K_v(C)^*$.
    \end{proposition}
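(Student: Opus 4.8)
The plan is to use the hypothesis $\chr k_v = \ell \neq p$ twice: once to reduce the character $\chi$ to (a multiple of) the valuation, so that existence becomes a statement about intersection theory on the good-reduction model, and once to make uniqueness fall out of the $p$-adically trivial structure of $\Jac(C)(K_v)$. First I would pin down $\chi$. Writing $K_v^*\cong\pi^{\Z}\times\cO^*$, I claim $\chi(\cO^*)=0$: the unit group $\cO^*$ is an extension of the finite group $\mu$ of roots of unity by the pro-$\ell$ group $1+\pi\cO$, its continuous image in $\Qp$ is a compact subgroup, and $\Qp$ is torsion-free with only pro-$p$ compact subgroups, so every continuous homomorphism out of a pro-$\ell$-by-finite group (with $\ell\neq p$) vanishes. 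Hence $\chi=\chi(\pi)\cdot\ord_v$, and for $E=\sum_i n_iP_i$ we get $\chi(f(E))=\chi(\pi)\sum_i n_i\ord_v(f(P_i))$.

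For existence, let $\cC\to\Spec\cO$ be the smooth proper model afforded by good reduction, a regular arithmetic surface with irreducible special fiber $\cC_s$. For $D,E\in\Div^0(C)$ of disjoint support, take Zariski closures $\overline D,\overline E$ (horizontal divisors, sections where the points are rational) and set $\langle D,E\rangle:=\chi(\pi)\cdot(\overline D\cdot\overline E)$, where $(\cdot)$ is the intersection number of horizontal divisors on $\cC$, computed as a sum over closed points of $\cC_s$ of local intersection multiplicities weighted by residue degrees. Bi-additivity, symmetry, and $\Qp$-valuedness are immediate from the corresponding properties of the intersection pairing and from additivity of closures on divisors of disjoint support; continuity follows because intersection multiplicities are locally constant under the $v$-adic variation of the points in the supports.

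The crux is the norm relation $\langle (f),E\rangle=\chi(f(E))$. I would write $\div_{\cC}(f)=\overline{(f)}+c\,\cC_s$ with $c\in\Z$, the vertical part being a multiple of the irreducible fiber. Two facts then finish it: (a) $\cC_s\cdot\overline E=\deg(E)=0$ since $E$ has degree zero, so the vertical correction drops out; and (b) pulling $\div_{\cC}(f)$ back along the section of a point $P$ gives $\div(f\circ\overline P)$ on $\Spec\cO$, of length $\ord_v(f(P))$, whence $\div_{\cC}(f)\cdot\overline E=\sum_i n_i\ord_v(f(P_i))$. Combining, $\langle (f),E\rangle=\chi(\pi)(\overline{(f)}\cdot\overline E)=\chi(\pi)\,\div_{\cC}(f)\cdot\overline E=\chi(f(E))$. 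I expect this verification — carefully tracking the vertical components, matching the intersection count to $\ord_v(f(P))$, and giving a rigorous proof of continuity — to be the main obstacle.

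For uniqueness, given two pairings satisfying the stated properties, their difference $\psi$ is bi-additive, continuous, and vanishes on all pairs $((f),E)$. Fixing $E$ and using a moving lemma to represent every class of $J(K_v):=\Jac(C)(K_v)$ by a divisor disjoint from the support of $E$, the function $D\mapsto\psi(D,E)$ descends to a continuous homomorphism $J(K_v)\to\Qp$. But $J(K_v)$ is compact and sits in an exact sequence $0\to J_1(K_v)\to J(K_v)\to J(k_v)\to 0$ with $J_1(K_v)$ pro-$\ell$ and $J(k_v)$ finite; since $\Qp$ is torsion-free and its compact subgroups are pro-$p$, the pro-$\ell$ part and the finite part both die, so every such homomorphism is trivial. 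Hence $\psi\equiv 0$, and the pairing is unique.
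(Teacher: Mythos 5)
The paper does not actually prove this proposition; it simply cites \cite[Prop.~1.2]{coleman-gross}, so the relevant comparison is with the argument behind that citation, and your proposal reconstructs it correctly: existence via intersection theory on the smooth model (your formula $\langle D,E\rangle=\chi(\pi)\cdot(\overline{D}\cdot\overline{E})$ is exactly the good-reduction specialization of the formula \eqref{htdef} that the paper records later, the correction divisor $\Phi(D)$ being unnecessary because the special fiber is irreducible and $\deg E=0$), and uniqueness via the fact that $J(K_v)$ is an extension of a finite group by the pro-$\ell$ kernel of reduction, hence admits no nontrivial continuous homomorphism to $\Q_p$. Two steps are asserted rather than proved, though both are standard: (i) the continuity of the descended homomorphism $J(K_v)\to\Q_p$ — continuity of $\psi$ on divisors does not formally transfer to the quotient without exhibiting local continuous (analytic) sections of $\Sym^g C\times\Sym^g C\to J$, and continuity genuinely matters here, since abstract homomorphisms from a pro-$\ell$ group to $\Q_p$ need not vanish; (ii) the case of non-rational closed points in the supports, where the intersection numbers must be weighted by residue degrees and $\chi(f(E))$ read via norms from the residue fields, so that your fact (a), $\cC_s\cdot\overline{E}=\deg E$, and fact (b) remain true. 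With those two glosses filled in, your proof is complete and is essentially the same argument as in the source the paper cites.
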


    \begin{proof}See \cite[Prop 1.2]{coleman-gross}.
    \end{proof}
    We will discuss how to compute this function in practice in
    Section~\ref{sec:local_away_from_p}.

    \subsection{Computing $p$-adic heights away from $p$}\label{sec:local_away_from_p}
    We keep the notation of the previous section, but assume, in addition, that $X$
    is hyperelliptic of genus $g$, given by an equation
     $y^2=f(x)$, where $f\in\cO[x]$ is separable.
    Let $v$ be a fixed non-archimedean place of $K$ not dividing $p$.

    The arithmetic geometry needed in the present section can be found in
    \cite[Chapters~8,9]{liu:agac}.
    We fix a proper regular model $\mathcal{C}$ of
    $C=X\times_K K_v$ over $\Spec(\cO)$ with special fiber
    $\mathcal{C}_v$. If $D$ is a prime
    divisor on $C$, then we let $\overline{D}$ denote the
    Zariski closure of $D$ on $\mathcal{C}$ and we extend this
    to all of $\mathrm{Div}(C)$ by linearity.
    It was shown by Hriljac \cite{hriljac:heights} that if $D\in\Div^0(C)$, then there exists a vertical $\Q$-divisor
    $\Phi(D)$ on $\mathcal{C}$ such that the intersection multiplicity of
$\overline{D}+\Phi(D)$ with any irreducible component of $\mathcal{C}_v$ is trivial.

    If $D, E \in \Div^0(C)$ have disjoint support, then according to
    \cite[Prop 1.2]{coleman-gross}
    the local height pairing between
    $D$ and $E$ at $v$ is given by
    \begin{equation}\label{htdef}
      h_v(D,E)=\ell_v(\pi_v)\cdot
    i_v\left(\overline{D}+\Phi(D),\overline{E}\right),
    \end{equation}
    where $i_v$ denotes the (rational-valued) intersection
    pairing on $\mathcal{C}$.
    This does not depend on the choice of $\Phi(D)$ or of $\mathcal{C}$.

    As in \cite{mueller:computing}, the following
    steps are sufficient to compute the local $p$-adic height
    pairing at $v$.
    \begin{itemize}
            \item[(1)] Compute a desingularization $\mathcal{C}$ in the strong
    sense of the Zariski closure $\overline{C}$ of $C$ over $\Spec(\cO)$;
      \item[(2)] Compute
    $i_v\left(\overline{D},\overline{E}\right);$
      \item[(3)] Compute
    $i_v\left(\Phi(D),\overline{E}\right)$.
    \end{itemize}
    These steps are dealt with in detail and greater generality in
    \cite{mueller:computing}.
    For the convenience of the reader, we provide a brief
    summary in the present case of hyperelliptic curves.

    Step (1) can be done using a desingularization algorithm
    implemented by Steve Donnelly in {\tt Magma} \cite{magma}.
    Recall that a desingularization $\mathcal{C}$ of the
    Zariski closure $\overline{C}$ of $C$ over $\Spec(\cO)$ in the strong
    sense is a proper regular model of $C$ over $\Spec(\cO)$ such that
    there exists a morphism $\xi:\mathcal{C}\ra \overline{C}$ that is an
    isomorphism above regular points of $\overline{C}$.
    From now on we will assume that our model $\mathcal{C}$ is of this
    type, as this property is needed in order for some of the other
    steps to work.
    See \cite[\S4.3]{mueller:computing} for details.

    For Step (2), we write our divisors $D$ and $E$ as
    differences of effective divisors
    \[
      D=D_1-D_2,\quad E=E_1-E_2.
    \]
    By bilinearity of the intersection pairing it suffices to
    discuss the computation of
    $i_v\left(\overline{D_1},\overline{E_1}\right)$.

    For now we assume that the points on $\mathcal{C}_v$ where $\overline{D_1}$ and
    $\overline{E_1}$ intersect all lie on a single affine piece $\mathcal{C}^a$ of
    $\mathcal{C}$.
    Suppose that $\mathcal{C}^a=\Spec(\cO[x_1,\ldots,x_n]/J)$ for some ideal $J$ and that
    $I_{\overline{D_1}}$ (resp. $I_{\overline{E_1}}$) represents $\overline{D_1}$ (resp. $\overline{E_1}$)
    on $\mathcal{C}$.
    Then we have
    \begin{equation}\label{IntForm}
      i_v\left(\overline{D_1},\overline{E_1}\right)=
            \mathrm{length}_{\cO_{\mathcal{C}^a_v}}\left(\left(\cO[x_1,\ldots,x_n]/J+I_{\overline{D_1}}+I_{\overline{E_1}}\right)_{(\pi_v)}\right).
    \end{equation}
    The computation of the right hand side of \eqref{IntForm} can be reduced to
    (essentially) the computation of Gr\"obner bases over $\Spec(\cO)$, cf.
    \cite[Algorithm~1]{mueller:computing}.

    In order to find the representing ideals $I_{\overline{D_1}}$ and
    $I_{\overline{E_1}}$ the strategy is
    to first find representing ideals for the Zariski closures of $D_1$ and $E_1$ on
    $\overline{C}$ and lift these to $\mathcal{C}$ through the blow-up process.
    We can guarantee that the intersection of these closures has support only in one
    of the two standard affine pieces of $\overline{C}$ by decomposing $D_1$ and
    $E_1$ into prime divisors over a finite extension $M$ of $K_v$; in the present case of hyperelliptic
    curves this is possible using factorisation of univariate polynomials over $M$
    as described in \cite[\S5.3]{mueller:computing}.

    So it remains to discuss how to represent Zariski closures of prime divisors on
    $C$ on the affine piece
    \[
    \mathcal{C}^a=\Spec\left(\cO[x,y]/(y^2-f(x))\right).
    \]
      If $D_1=\sum^d_{i=1} (P_i)$ is reduced, then we can use
    a representing ideal
    \[
      (a(x), y-b(x)),
    \]
    where $a(x)\in \cO[x]$ has roots $x(P_1),\ldots,x(P_d)$ and does not vanish
    modulo $\pi_v$ and $b(x)\in\cO[x]$ does not vanish modulo $\pi_v$ and
    satisfies $y(P_i)=b(x(P_i))$ for $i\in\{1,\ldots,d\}$.
    This is commonly referred to as Mumford representation, see
    \cite[3.19]{mumford:tata1}.
    In particular, if $D_1=(P_1)$, where $P_1\in C(K_v)$, then we can take the ideal
    \[
     (x-x(P_1), y-y(P_1)).
    \]
    The other case we have to consider is the case $D_1=(P_1)+(P^-_1)$, where $P_1$
    is defined over an extension of $K_v$ of degree at most~2 and $P^-_1$ is the
    image of $P_1$ under the hyperelliptic involution.
    Then we can simply use the ideal
    \[
     (x-x(P_1)).
    \]

    For Step (3) we refer to \cite[\S4.5]{mueller:computing}.
    In brief, we first compute the intersection matrix $M$ of $\mathcal{C}_v$ and its
    Moore-Penrose pseudoinverse $M^+$.
    Suppose that the special fiber $\mathcal{C}_v$ is given by $\sum^m_{i=0}n_i\Gamma_i$,
    where $\Gamma_0,\ldots,\Gamma_m$ are the irreducible components of $\mathcal{C}_v$.
    We also need the vectors $s(D)$ and $s(E)$ of intersection multiplicities, where
    \[
    s(D)=\left(n_0\cdot i_v(\overline{D},\Gamma_0),\ldots,n_m\cdot i_v(\overline{D},\Gamma_m)\right)^T;
    \]
    and $s(E)$ is defined similarly.
    These can be computed using the techniques introduced in Step (2) above.
    Then we have
    \[
      i_v(\Phi(D),\overline{E})=-s(E)^T\cdot M^+\cdot s(D).
    \]

    We have not discussed how we can compute a finite set $U$ of places of $K$ such
    that we have $h_v(D,E)=0$ for all $v\notin U$.
    This is discussed in \cite[\S4.2, \S5.2]{mueller:computing}.
    Here we only mention that it suffices to compute $U$ containing all bad places (that is, all places
    $v$ such that $\mathrm{ord}_v(2\cdot\mathrm{disc}(f))>0$) and all places $v$ such that $D$
    and $E$ have nontrivial common support modulo $\pi_v$.
    The latter can be computed as follows, where $D=D_1-D_2$ and $E=E_1-E_2$ are as
    above.

    We only discuss the computation of all $v$ such that $D_1$ and $E_1$ have
    nontrivial common support modulo $\pi_v$.
    Let $I_{D_1}$ and $I_{E_1}$ denote representing ideals of the Zariski closures of $D_1$
    and $E_1$ on the affine piece
    \[
      \Spec(\cO_K[x,y]/(y^2-f(x)))
    \]
    of the Zariski closure of $X$ over $\Spec(\cO_K)$.
    We assume that $\cO_K$ is Euclidean; the general case can be reduced to this
    situation using a straightforward trick discussed in
    \cite[\S4.2]{mueller:computing}.
    If $B$ is a Gr\"obner basis of the ideal
    \[
      (y^2-f(x)) + I_{D_1} + I_{E_1},
    \]
    over $\cO_K$, then $B$ contains a unique element $q\in\cO_K$ (cf.
    \cite[Lemma~4.3]{mueller:computing}).
    Factoring $(q)$ yields a set of places containing all places such that $D_1$
    and $E_1$ have nontrivial common support on the reduction of the affine piece
    given by $y^2=f(x)$.
    Repeating this process for the other standard affine piece $y^2=x^{2g-2}f(1/x)$
    yields all places $v$ such that $D_1$ and $E_1$ intersect nontrivially modulo
    $\pi_v$.

    \subsection{Computing $p$-adic heights above $p$}\label{sec:local_above_p}

    We now describe the local contribution at a place $v\mid{}p$.

    \begin{definition} Let $D,E \in \Div^0(C)$ have disjoint support. The local height pairing at a place $v\mid{}p$ is given by the following Coleman integral:
      \begin{equation*}
        h_v({D},{E}):=t_v\left(\int_{E}\omega_{D}\right),
      \end{equation*} where $t_v$ is the trace map
      determined by the decomposition of $\ell_v$ (see~\eqref{tdefnd}), and $\omega_{D}$ is a differential associated to $D$.
    \end{definition}

    We start by reviewing the construction of  $\omega_{D}$.   Let $T(K_v)$ denote the subgroup of differentials on $C$ of the third
    kind. We are interested in a particular
    subgroup of $T(K_v)$ whose elements are the logarithmic differentials,
    i.e., those of the form $\frac{df}{f}$ for $f \in K_v(C)^*$. We denote
    this subgroup as $T_l(K_v)$.  Letting $\hdr^{1,0}(C/K_v)$ denote the space of holomorphic differentials and $A = \Pic^0(C)$, we have the short exact sequence
    $$ 0 \lra \hdr^{1,0}(C/K_v) \lra T(K_v)/T_l(K_v) \lra A(K_v) \lra 0.$$

    This sequence has a natural identification with the $K_v$-rational points of an exact sequence of commutative algebraic groups over $K_v$:
    $$0 \lra  \hdr^{1,0}(C/K_v) \lra \mathcal{U} \lra A \lra 0,$$ where $\mathcal{U}$ is
    the universal extension of $A$ by a vector group and $\hdr^{1,0}(C/K_v)
    \cong \Ga^g.$

    Now as $K_v$ is $p$-adic, we will make use of the fact that we have a
    logarithmic homomorphism defined on an open subgroup of the points of
    any commutative $p$-adic Lie group, $G$, to the points of its Lie
    algebra $\Lie(G)$. When $G = \mathcal{U}$ or $A$, the open subgroup on which the
    logarithm converges has finite index, so the homomorphism can be
    uniquely extended to the entire group. We denote this extension as
    $\log_{\mathcal{U}}$ or $\log_A$, respectively. Since the logarithm is functorial
    and equal to the identity on $\hdr^{1,0}(C/K_v) $, we have the
    following:

    \begin{proposition}\label{comm}There is a canonical
      homomorphism $$\Psi: T(K_v)/T_l(K_v) \lra \hdr^1(C/K_v)$$ which is the
      identity on differentials of the first kind and makes the following
      diagram commute:
      $$\xymatrix{
        0 \ar[r] & \hdr^{1,0}(C/K_v) \ar[r]\ar@{=}[d] & \mathcal{U}(K_v) \ar[d]^{\Psi = \log_{\mathcal{U}}}
        \ar[r] & A(K_v) \ar[r]\ar[d]^{\log_A}  & 0  \\
        0 \ar[r] & \hdr^{1,0}(C/K_v) \ar[r] &\hdr^1(C/K_v) \ar[r]
        &H^1(C,\cO_{C/K_v}) \ar[r] &0.}$$
    \end{proposition}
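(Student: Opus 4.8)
The plan is to construct $\Psi$ by combining the two logarithm maps $\log_{\mathcal{U}}$ and $\log_A$ through the functoriality of the $p$-adic logarithm on commutative $p$-adic Lie groups. First I would recall that for any commutative $p$-adic Lie group $G$ the logarithm $\log_G$ converges on a finite-index open subgroup and, because $G$ is commutative, extends uniquely to a homomorphism on all of $G(K_v)$ valued in $\Lie(G)(K_v)$. Applying this to $G=\mathcal{U}$ and $G=A$ gives the two vertical maps on the right of the asserted diagram, where I identify $\Lie(\mathcal{U})$ with $\hdr^1(C/K_v)$ and $\Lie(A)$ with $H^1(C,\cO_{C/K_v})$ using that $\mathcal{U}$ is the universal extension of $A$ by $\hdr^{1,0}(C/K_v)\cong\Ga^g$. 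The left-hand vertical map is the identity on $\hdr^{1,0}(C/K_v)$, which is legitimate because the logarithm of a vector group is the identity (the exponential on $\Ga^g$ is the identity, so $\log_{\Ga^g}=\id$).

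The key step is to check that the right-hand square commutes, i.e.\ that $\log_A$ applied to the image of a point of $\mathcal{U}(K_v)$ equals the image under the bottom map of $\log_{\mathcal{U}}$ of that point. This is exactly the naturality of the logarithm with respect to the morphism of algebraic groups $\mathcal{U}\to A$: the logarithm is a morphism of functors from commutative $p$-adic Lie groups to their Lie algebras, so for any homomorphism $\pi:\mathcal{U}\to A$ one has $\log_A\circ\pi = \Lie(\pi)\circ\log_{\mathcal{U}}$, and $\Lie(\pi)$ is precisely the bottom horizontal projection $\hdr^1(C/K_v)\to H^1(C,\cO_{C/K_v})$. A snake-lemma-style diagram chase then shows that $\log_{\mathcal{U}}$ sends the subgroup $\hdr^{1,0}(C/K_v)\subset\mathcal{U}(K_v)$ into $\hdr^{1,0}(C/K_v)\subset\hdr^1(C/K_v)$ as the identity, so the left square commutes as well; this identifies $\Psi:=\log_{\mathcal{U}}$ (transported along the identification $T(K_v)/T_l(K_v)\cong\mathcal{U}(K_v)$ coming from the two short exact sequences) as a map $T(K_v)/T_l(K_v)\to\hdr^1(C/K_v)$ that is the identity on differentials of the first kind.

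The main obstacle I anticipate is justifying the identification of the top exact sequence of $K_v$-points of algebraic groups with the bottom exact sequence of Lie algebras in a way that is compatible with the logarithms, and in particular verifying that $\log_{\mathcal{U}}$ restricts to the identity on the vector-group part. The cleanest route is to note that $\mathcal{U}$ is by construction the universal vector extension of $A$, so its Lie algebra is canonically $\hdr^1(C/K_v)$ with the Hodge filtration realized by the sub $\hdr^{1,0}(C/K_v)$; the logarithm on the vector subgroup $\Ga^g$ is the identity because $\Ga^g$ is already its own Lie algebra via the exponential $\id$. Once this compatibility is in place, uniqueness of $\Psi$ follows from the uniqueness of the extension of the logarithm to a homomorphism on the whole group, together with the requirement that it be the identity on the first-kind differentials. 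I would cite \cite{coleman-gross} for the precise form of the universal extension and the identification of the two sequences, reducing the proof to the functoriality statement above.
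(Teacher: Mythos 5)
Your proposal is correct and follows essentially the same route as the paper: the paper gives no separate proof, deriving the proposition directly from the observation immediately preceding it that the logarithm is functorial, extends uniquely from a finite-index open subgroup to all of $\mathcal{U}(K_v)$ and $A(K_v)$, and restricts to the identity on $\hdr^{1,0}(C/K_v)$ — which is precisely the functoriality argument you spell out, with the identification of the two exact sequences taken from \cite{coleman-gross} just as you do. Your additional verification that $\Lie(\pi)$ is the projection $\hdr^1(C/K_v)\to H^1(C,\cO_{C/K_v})$ and that $\log$ is the identity on the vector-group part simply makes explicit what the paper leaves implicit.
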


    Note that the map $\Psi$ takes a differential of the third kind on $C$
    to a differential of the second kind modulo exact differentials,
    sending log differentials to $0$. It
    can be extended to a linear map from the $K_v$-vector space of all
    differentials on $C/K_v$ to $\hdr^1(C/K_v)$ by writing an arbitrary
    differential $\nu$ as a linear combination $\nu = \sum\alpha_i\mu_i +
    \gamma$, where $\mu_i$ is of the third kind, $\alpha_i \in
    \overline{K_v}$, and $\gamma$ is of the second kind on $C$. We then
    define $\Psi(\nu) = \sum \alpha_i \Psi(\mu_i) + [\gamma]$.

    Now recall that we have at our disposal the complementary subspace
    $W=W_v$. It allows us to isolate a canonical form $\omega_D$ with
    residue divisor $D$ as follows:
    \begin{definition}\label{omd}
      Let $D \in \Div^0(C)$. Then $\omega_{D}$ is the unique form of the third kind
      satisfying $$\res(\omega_{D})={D}, \quad \lag(\omega_{D})\in W.$$
    \end{definition}

  \subsection{Computing the global $p$-adic height}\label{sec:height_algo}
  Using the material in Sections~\ref{sec:local_away_from_p}  and~\ref{sec:local_above_p}, we may now give an algorithm to compute global $p$-adic heights on Jacobians of hyperelliptic curves.

\begin{algorithm}[Global $p$-adic height pairing]\label{algo:height22}
    $\,$\\
    Input: \begin{itemize}\item Genus $g$ hyperelliptic curve $X$ over $\Q$ of the form $y^2 = f(x)$, with $f$ separable, $\deg f = 2g+1$,
    \item Prime $p$ of good ordinary reduction for $X$,  \item  Divisors $D,E \in
        \Div^0(X)$ with disjoint support.  \end{itemize}
    Output: \begin{itemize}
    \item Global $p$-adic height pairing $h(D,E) = \sum_{v} h_v(D,E)$ \end{itemize}
    Algorithm:
    \begin{enumerate}
    \item \textbf{Heights away from $p$}
    \begin{enumerate}
    \item \textbf{Find relevant places}. Compute the following set $U$ of non-archimedean places of $K$:
\[
    U=\{v : v\mid{}2\disc(f)\}\cup\{v:\mathrm{supp}(D\bmod v)\cap\mathrm{supp}(E\bmod
v)\ne\emptyset\}.
\]  \item \textbf{Local computations}. For each $v\in U$ go through the following steps.
\begin{enumerate}
    \item \textbf{Regular models}. Compute a desingularization $\mathcal{C}$ of the
Zariski closure of $X\times_K K_v$ over $\cO=\cO_v$ in the strong sense.
\item \textbf{Ideal representatives}. Write $D=D_1-D_2$ and $E=E_1-E_2$, where $D_i,\,E_j$
are effective and find representatives $I_{\overline{D_i}}$ and $I_{\overline{E_j}}$ of
the Zariski closures $\overline{D_i},\overline{E_j}$ of $D_i,\,E_j$, respectively, on an affine piece of $\mathcal{C}$ containing
$\mathrm{supp}(D)\cap\mathrm{supp}(E)$.
\item \textbf{Horizontal data}. Compute $i_v(\overline{D_i},\overline{E_j})$ for
$i,j\in\{1,2\}$ using~\eqref{IntForm}.
\item \textbf{Vertical data}. Compute the Moore-Penrose pseudoinverse $M^+$ of
the intersection matrix of the special fiber $\mathcal{C}_v=\sum^m_{i=0}n_i\Gamma_i$ and
for $H\in\{D,E\}$, the vectors
    \[s(H)=\left(n_0\cdot i_v(\overline{H},\Gamma_0),\ldots,n_m\cdot
i_v(\overline{H},\Gamma_m)\right)^T.\]
\item \textbf{Local height pairing}. Let $k_v$ be the residue field at $v$ and
set
\[
h_v(D,E)=\left(s(E)\cdot M^+\cdot
s(D)^T-\sum_{i,j} i_v(\overline{D_i},\overline{E_j})\right)\cdot \log(\# k_v).
\]

\end{enumerate}
\item \textbf{Global height pairing away from $p$}. Compute $\sum_{v\nmid p} h_v(D,E)$.
\end{enumerate}

\item \textbf{Height above $p$} \begin{enumerate}
    \item \textbf{From $D$ to $\omega_D$}. Choose $\omega$ a differential of the third kind with $\Res(\omega) = D$ and compute $\log(\omega) = \Psi(\omega)$ for $\omega$. Using the decomposition $$H_{dR}^1(C/K_v) \simeq H_{dR}^{1,0}(C/K_v)\oplus W,$$ write $$\log(\omega) = \eta + \log(\omega_{D}),$$
        where $\eta$ is holomorphic, and $\log(\omega_{D}) \in W$. Now, using this holomorphic component $\eta$, we have $$\omega_{D} := \omega - \eta.$$

    \item \textbf{Coleman integration}
    \begin{enumerate}\item ...of a holomorphic differential. Compute  $\int_{E}\eta$, as in \cite{bbk}.

    \item {...of a meromorphic differential.} Let $\phi$ be a $p$-power lift of Frobenius and set $\alpha := \phi^*\omega - p\omega$. Write $E = \sum_i E_i$, where $E_i = (R_i) - (S_i)$ for points $R_i,S_i$ on $X$. Then for $\beta_i$ a differential with residue divisor $E_i$, we compute

     \begin{align*}\hspace{1in}\int_{E}\omega &= \sum_i \int_{E_i} \omega\\
    &=\sum_i \frac{1}{1-p}\left(\Psi(\alpha) \cup \Psi(\beta_i) + \sum \Res \left(\alpha\int\beta_i\right) - \int_{\phi(S_i)}^{S_i} \omega - \int_{R_i}^{\phi(R_i)}\omega\right),\end{align*} as described in \cite{bb}.
\end{enumerate}
    \item\label{Step5} \textbf{Height pairing above $p$}. Subtract the integrals to recover the pairing at $p$: $$h_p(D,E) = \int_E \omega_D = \int_E \omega -\int_E \eta.$$
    \end{enumerate}
    \item \textbf{Global $p$-adic height pairing}. Return the sum of  1(c) and 2(c). \end{enumerate}
    \end{algorithm}

\begin{remark} Note that our current implementation of Algorithm~\ref{algo:height22} further assumes, in Step 2(b)(ii), that $R_i,S_i \in C(\Q_p)$.\end{remark}

    \subsection{Computing the $p$-adic regulator}\label{sec:comp_reg}
    In this section, we explain how we use Algorithm~\ref{algo:height22}, in practice, to compute the $p$-adic regulator of $A/\Q$, where $A$
    is a Jacobian surface associated to a curve $X/\Q$ of genus~2   and $p$ is a prime such that $A$
    has good ordinary
    reduction at $p$.

    Suppose that $P,Q\in A(\Q)$ are distinct and that
    we want to compute the $p$-adic height pairing of $P$ and $Q$.
    If we can find representatives $D_1=(P_1)-(P_2)$ and $D_2=(Q_1)-(Q_2)$ of $P$ and $Q$,
    respectively, where $P_1,P_2,Q_1,Q_2\in X(\Q)$ are all distinct, then we can simply apply
    Algorithm~\ref{algo:height22} directly to compute $h_p(D_1,D_2)$.

    However, in many situations, it is not possible to find
    representatives of $P$ and $Q$ whose support consists of $\Q$-rational points.
    We can still compute the $p$-adic height pairing if we can find representatives
    $D_1,D_2\in \Div^0(X)(\Q)$ of $P$ and $Q$, respectively, such that
    \[D_1\times_\Q\Q_p= (P_1)+(P_2)-(R)-(R^-),\]
    \[D_2 \times_\Q\Q_p= (Q_1)+(Q_2)-(S)-(S^-),\] where
    $P_1,P_2,Q_1,Q_2,R,S\in C(\Q_p)$ are pairwise distinct.
    It is explained in Section~\ref{sec:local_away_from_p} how to find ideal
    representations of the positive and negative parts of $D_1$ and $D_2$, respectively, which is all we need to
    compute the local height pairings away from $p$.
    The latter is very similar to the computation of canonical real-valued height
    pairings on Jacobians of hyperelliptic curves as discussed in
    \cite{mueller:computing} and has been implemented in {\tt Magma}.

    If we want to compute the $p$-adic height pairing of $P$ with itself,
    then we simply compute minus the $p$-adic height pairing of $P$ with $-P$ using
    the techniques discussed above.

    \begin{remark}\label{rk:general_hyper_rational}
      In principle this algorithm can be generalized immediately to
      hyperelliptic curves over $\Q$ of arbitrary genus, with minor
      subtleties if the genus is odd and the degree is even. See the
      discussion in \cite{mueller:computing}.  However, to find the
      $p$-adic regulator precisely (and not only up to a rational square), we first need a set of generators for
      $A(\Q)/A(\Q)_{\textrm{tors}}$. Given generators of a finite index subgroup of
$A(\Q)/A(\Q)_{\textrm{tors}}$, a general algorithm for this
      computation is presented by
      Stoll in \cite{stoll:height_constantII}, but currently it is
      only feasible if the rank is zero or $X$ has genus~2 (using
      current work in progress due to Stoll this can be extended to
      genus~3).  Hence we can only hope to compute the $p$-adic
      regulator up to a rational square in general.
    \end{remark}

    \begin{remark}
    It is useful to note that in order to compute the $p$-adic regulator in the
    genus~2 case, we need not work with generators of the free
    part of the Mordell-Weil group itself.
    Indeed, if we have a set of points generating a subgroup $G$ of finite index,
    then we can easily compute the index of $G$ by computing and comparing the real-valued
    regulators of $G$ and of $A(\Q)/A(\Q)_{\textrm{tors}}$, see Remark~\ref{rk:general_hyper_rational}.
    Since the $p$-adic height is quadratic, it suffices to compute the $p$-adic regulator
    of $G$ in order to deduce the $p$-adic regulator.
    This helps in finding points with representatives of the required form.
    \end{remark}

    \begin{remark}
    Suppose that $X$ is hyperelliptic and  defined over a number field $K$.
    Then we can still compute the $p$-adic regulator exactly as above if all completions
    $K_v$ at places $v\mid{} p$ satisfy $K_v\cong \Q_p$.
\end{remark}

Finally, following \cite{stein-wuthrich:shark}, we note that the $p$-adic regulator has a natural normalization from Iwasawa theory, coming from the choice of topological generator $\gamma$.  This is done so that the global $p$-adic height depends only on the choice of isomorphism $\Gamma \ra \Z_p$, instead of on the $\Z_p$-extension. This normalization is carried out by dividing $h(P_i,P_j)$ by $\log_p(\kappa(\gamma))$, or, alternatively, since the $p$-adic regulator involves a basis of dimension $r$, by taking
$$\Reg_{\gamma}(A/\Q) = \frac{\Reg_p(A/\Q)}{(\log_p(\kappa(\gamma)))^r}.$$

\section{Evidence for rank 2 Jacobians of genus 2 curves}\label{sec:rank2}
As we now have algorithms to compute the $p$-adic regulator and $p$-adic
$L$-series, we proceed to verify Conjecture~\ref{pbsd} for specific abelian
varieties, using BSD data from \cite[Table 2]{empirical}.
We take as our list of candidate modular abelian varieties $A$ those appearing
in \cite{empirical} of rank 2.

We were able to find generators represented by divisors whose support consists only of
$\Q$-rational points
for each of the rank~2 Jacobians taken from \cite{empirical} except for the one with level~167.
Hence we used the easier first approach outlined in Section~\ref{sec:comp_reg}
to compute the $p$-adic regulator for all Jacobians except for the one associated to level~167.

In order to compute the special values $\cL^*_p(A, 0)$, we used an implementation in Sage of the
algorithm outlined in Section~\ref{sec:OMS}.
Previously, we had used Algorithm~\ref{algo:height22} obtaining fewer
digits of precision.
The results agreed up to the precision obtained using the latter.

Table~\ref{t:1} is taken from \cite{empirical} and contains minimal models (in the sense
of \cite{liu:conducteur}) for each curve from \cite{empirical} whose Jacobian
    variety has Mordell-Weil rank 2 over $\Q$, as well as the corresponding level $N$.

The data presented in this section proves the following:

\begin{theorem}
Assume that for the Jacobians of all curves in Table~\ref{t:1} the Shafarevich-Tate group
over $\Q$ is~2-torsion.
Then Conjecture~\ref{pbsd} is satisfied up to the respective precision specified in
the tables below for the Jacobians of all curves in Table~\ref{t:1} at all good ordinary $p < 100$ satisfying the hypotheses of our algorithms.
\end{theorem}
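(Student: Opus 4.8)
The plan is to verify, curve by curve and prime by prime, the two assertions of Conjecture~\ref{pbsd}: that $\ord_T(\cL_p(A,T))$ equals the Mordell--Weil rank $r=2$, and that the leading coefficient identity~\eqref{eq-pbsd} holds to the stated precision. The strategy is to compute the left-hand side $\cL_p^*(A,0)$ and the right-hand side of~\eqref{eq-pbsd} by entirely independent methods and then to compare them; throughout we use the running hypothesis $c_A=1$, which enters through the period normalization~\eqref{norm_periods}.

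First I would compute the $p$-adic $L$-series $\cL_p(A,T)=\prod_\sigma \cL_p(f^\sigma,T)$ using Algorithm~\ref{alg:padic_lseries} with Step~(7) replaced by the overconvergent modular symbols method of Section~\ref{sec:OMS}, which is what yields the high precision needed for the comparison. This requires fixing the correct normalization $\delta^+$ of the plus modular symbol via a suitable quadratic twist of analytic rank~$0$ (Steps~(2)--(6)) and computing the unit roots $\alpha^\sigma$ of the relevant quadratics. From the resulting power series I would read off the order of vanishing at $T=0$ by checking that the coefficients $a_0,a_1$ vanish to the working precision while $a_2\neq 0$; this confirms the rank assertion and identifies $\cL_p^*(A,0)$ with the coefficient of $T^2$.

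Next I would assemble the right-hand side. The multiplier $\epsilon_p(A)=\prod_\sigma \epsilon_p(f^\sigma)$ is immediate from the $\alpha^\sigma$. The Tamagawa product $\prod_v c_v$, the torsion orders $|A(\Q)_\tors|$ and $|\Adual(\Q)_\tors|$, and the order $|\Sha(A/\Q)|$ (which under the stated hypothesis equals its $2$-part) are taken from the BSD data in \cite[Table~2]{empirical}. The remaining ingredient is the regulator $\Reg_\gamma(A)=\Reg_p(A)/(\log_p(\kappa(\gamma)))^2$. For this I would produce generators $P_1,P_2$ of the free part of $A(\Q)$ and compute the $2\times 2$ matrix $(h(P_i,\phi_c(P_j)))$ using Algorithm~\ref{algo:height22}, following Section~\ref{sec:comp_reg}; its determinant, divided by the index $[\Adual(\Q):\phi_c(A(\Q))]$ as in Definition~\ref{def:reg}, gives $\Reg_p(A)$, which is well-defined by Lemma~\ref{lem:reg} independently of the choice of $\phi_c$. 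Finally I would check that the two sides of~\eqref{eq-pbsd} agree to the attainable precision, completing the verification for each entry of Table~\ref{t:1}.

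The hard part will be finding divisor representatives of the Mordell--Weil generators whose support consists of points defined over $\Q_p$, as required for the Coleman-integration step of Algorithm~\ref{algo:height22}; as noted in Section~\ref{sec:rank2}, every curve except the one of level~$167$ admits generators with $\Q$-rational support, and for level~$167$ the harder approach of Section~\ref{sec:comp_reg} using divisors with conjugate $\Q_p$-points must be used. A secondary difficulty is tracking the $p$-adic precision consistently across the independent $L$-series and height computations so that the asserted agreement is genuine: the error bounds of the overconvergent-symbols algorithm and of Algorithm~\ref{algo:height22} must both be controlled, and the comparison can be claimed only up to the smaller of the two precisions, which is exactly the precision recorded in the tables.
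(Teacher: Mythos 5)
Your proposal is correct and follows essentially the same route as the paper: compute $\cL_p(A,T)$ by Algorithm~\ref{alg:padic_lseries} with Step~(7) replaced by the overconvergent modular symbol method of Section~\ref{sec:OMS} and normalized via a rank-zero quadratic twist, compute $\Reg_\gamma(A)$ by Algorithm~\ref{algo:height22} as in Section~\ref{sec:comp_reg} (with $\Q$-rational generators except for level~167, where pointwise $\Q_p$-rational divisors and a finite-index subgroup are used), take the Tamagawa numbers, torsion orders and conjectural $|\Sha|$ from \cite{empirical}, and record the agreement of the two sides of~\eqref{eq-pbsd} to the attainable precision. This is precisely how the paper's tables establish the theorem, including your caveats about level~167 and about tracking the $p$-adic precision of the two independent computations.
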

\begin{remark}
The assertion that all Shafarevich-Tate groups are 2-torsion for these abelian varieties follows from the classical
conjecture of Birch and Swinnerton-Dyer by \cite{empirical}.
\end{remark}

    For our algorithms, we take the integral models in \cite[Table 1]{empirical} and do a change of
coordinates to obtain the corresponding models of the form $y^2 = f(x)$. We record both models in Table~\ref{t:1}
      \begin{center}
    \begin{table}[h!]
        \begin{tabular}{| r|l | l| }
        \hline
        $N$ & $(g(x),h(x))$ for integral model  & $f(x)$ for $y^2 = f(x)$ model\\
        \hline
        67 &  $(x^5-x, x^3+x+1)$ & $x^{6} + 4 x^{5} + 2 x^{4} + 2 x^{3} + x^{2} - 2 x + 1$\\
        73 & $( -x^5 - 2x^3 + x, x^3 + x^2 + 1)$ & $ x^{6} - 2 x^{5} + x^{4} - 6 x^{3} + 2 x^{2} + 4 x + 1$ \\
        85 & $(x^4 + x^3 + 3x^2-2x+1,x^3+x^2+x)$ & $x^{6} + 2 x^{5} + 7 x^{4} + 6 x^{3} + 13 x^{2} - 8 x + 4$\\
        93 & $(-2x^5 + x^4 + x^3,x^3 +x^2 + 1)$& $x^{6} - 6 x^{5} + 5 x^{4} + 6 x^{3} + 2 x^{2} + 1$\\
        103 & $(x^5 +x^4,x^3 + x^2 + 1)$ & $x^{6} + 6 x^{5} + 5 x^{4} + 2 x^{3} + 2 x^{2} + 1$\\
        107 & $(x^4 - x^2 - x - 1,x^3 + x^2 + 1)$ & $x^{6} + 2 x^{5} + 5 x^{4} + 2 x^{3} - 2 x^{2} - 4 x - 3$\\
        115 & $(2x^3 + x^2 + x,x^3 + x + 1)$ & $x^{6} + 2 x^{4} + 10 x^{3} + 5 x^{2} + 6 x + 1$\\
        125,A & $(x^5 + 2x^4 + 2x^3 +x^2 - x -1,x^3 + x + 1)$ & $x^{6} + 4 x^{5} + 10 x^{4} + 10 x^{3} + 5 x^{2} - 2 x - 3$\\
        133,B & $(-x^5 + x^4 -2x^3 +2x^2-2x,x^3 + x^2 + 1)$ & $x^{6} - 2 x^{5} + 5 x^{4} - 6 x^{3} + 10 x^{2} - 8 x + 1$\\
        147 & $(x^5 + 2x^4 + x^3 + x^2 + 1,x^3 + x^2 + x)$ & $x^{6} + 6 x^{5} + 11 x^{4} + 6 x^{3} + 5 x^{2} + 4$\\
        161 & $(x^3 + 4x^2 + 4x + 1,x^3 + x + 1)$ & $x^{6} + 2 x^{4} + 6 x^{3} + 17 x^{2} + 18 x + 5$\\
        165 & $(x^5 + 2x^4 + 3x^3 + x^2-3x,x^3 + x^2 + x)$ & $x^5 + 5x^4 - 168x^3 + 1584x^2 - 10368x + 20736$\\
        167 & $(-x^5 - x^3 - x^2 -1,x^3 + x + 1)$  & $x^{6} - 4 x^{5} + 2 x^{4} - 2 x^{3} - 3 x^{2} + 2 x - 3$\\
        177 & $(x^5 + x^4 + x^3,x^3 + x^2+1)$ & $x^{6} + 6 x^{5} + 5 x^{4} + 6 x^{3} + 2 x^{2} + 1$\\
        188 & $(x^5 - x^4 + x^3 + x^2 - 2x+1,0)$ & $x^5 - x^4 + x^3 + x^2 - 2x+1$\\
        191 & $(-x^3 + x^2 +x,x^3 + x+ 1)$ & $x^{6} + 2 x^{4} - 2 x^{3} + 5 x^{2} + 6 x + 1$\\
            \hline
        \end{tabular}

     \caption{$\;\;$Levels, integral models $y^2  + h(x)y = g(x)$, simplified models $y^2 = f(x)$}\label{t:1}
    \end{table}
      \end{center}
Let us recall what is known about computing the quantities appearing on the right
side of Equation~\eqref{eq-pbsd} which we have not addressed so far.
As described in \cite{empirical}, the order of the torsion subgroups and the Tamagawa
numbers are computable.
For the Jacobians of the curves in Table~\ref{t:1}, we list these values, taken from
\cite[Table~2]{empirical}, in Table~\ref{t:3}.
While no general algorithm has yet been developed and implemented
to compute the order of the Shafarevich-Tate group
$\Sha(A/\Q)$
for the Jacobians of
each of the curves in Table~\ref{t:1}, the conjectural order $\Sha?$ of the group is also
given, conditional on the classical BSD conjecture~\ref{bsdabvar} (and equal to the order of
$\Sha(A/\Q)[2]$).

\begin{remark}
There is a general approach to computing $\Sha(A/\Q)$ for the rank $2$
Jacobians in Table~\ref{t:1}, which is to use Heegner points and
Kolyvagin's Euler system to give an explicit upper bound, then compute
the remaining Selmer groups.  It would be an interesting project
to systematically develop this approach, by generalizing
\cite{bsdalg1, miller:bsd1} to this new setting.
\end{remark}

    \begin{table}[h!]
      \begin{center}
        \begin{tabular}{| r| c | c | c |}
        \hline
        $N$ & $c_v$'s & $| A(\Q)_{\textrm{tors}}|$ & $\Sha?$  \\
        \hline
        67 & 1 & 1 &1 \\
        73 & 1 & 1 & 1\\
        85 & 4,2 & 2 & 1\\
        93 & 4,1 & 1 & 1\\
        103 & 1 & 1 & 1 \\
        107 & 1 & 1 & 1 \\
        115 & 4,1 & 1 & 1\\
        125,A & 1 & 1& 1\\
        133,B & 1,1 & 1 & 1 \\
        147 & 2,2 & 2 & 1 \\
        161 & 4,1 & 1 & 1 \\
        165 & 4,2,2 & 4 & 1\\
        167 & 1 & 1 & 1\\
        177 & 1,1 & 1 & 1\\
        188 & 9,1 & 1 & 1\\
        191 & 1 & 1 & 1 \\
            \hline
        \end{tabular}
      \end{center}
      \caption{$\;\;$BSD data for rank 2 Jacobians of genus 2 curves}\label{t:3}
    \end{table}

     Table~\ref{tabglob} below  provides the local height pairings away from $p$ for $N\ne167$.
    The global generators for $A(\Q)/A(\Q)_{\tors}$ that we used are given as divisor classes $[P-Q],[R-S]$, where $P,\,Q,\,R,\,S\in C(\Q)$.
    Points at infinity are denoted by $\infty_a$, where $a$ is equal to $y/x^3$ evaluated
    at $\infty_a$.
    The heights list has three entries giving the nontrivial local height pairings
 $h_v((P)-(Q),(R)-(S)),\,h_v((P)-(Q),(-Q)-(-P))$
    and $h_v((R)-(S),(-S)-(-R))$ for $v\ne p$.
 For two divisors $D$ and $E$, this data is returned as a list of pairs $[v,d_v]$, where
 $v$ is a prime and $h_v(D,E)=d_v\cdot\log_p(v)$.
    \begin{table}[h!]
     \begin{center}
     \begin{tabularx}{\textwidth}{|r| c|X|}
     \hline
     $N$ & global generators for $A(\Q)/A(\Q)_{\tors}$ & heights $[[v, h_v]]$ \\
     \hline
     67 &  $[( -1, 0 )-\infty_{-1}], [( 0, -1 )- \infty_0]$ &  $[\;], [\;], [\;]$ \\
     73 &  $[( -1, -2 )-\infty_{-1}],[( 0, -1 )-\infty_0 ]$ & $[\;], [[ 3, 1 ]], [\;]$\\
     85 & $[(-1, -2 )-\infty_{-1}],[( 1, -4 )-\infty_0 ]$ & $[[ 2, -1 ]],[[ 5, \frac{1}{2}
]],[[ 5, \frac{1}{2}]]$\\
93 & $[( -1, -2 )-\infty_{-1}],[( 1, -3 )-\infty_0 ]$ & $[\;], [[ 3, \frac{1}{2} ]],[[
3, \frac{1}{2} ]]$ \\
103 & $[( -1, -1 )- \infty_{-1}],[( 0, -1 )-\infty_0 ]$& $[\;], [\;], [\;] $\\
107 & $[( -1, -1 )-\infty_{-1}],[( 1, -2 )-\infty_0 ]$& $[\;], [\;], [\;]  $ \\
115 & $[( 1, -4 )-\infty_{-1}], [( -2, 2 )-\infty_0 ]$&  $[[ 3, -1 ]],[[ 5,
\frac{1}{2} ]],[[ 5, \frac{1}{2} ]]$\\
125,A & $[( -1, 0 )-\infty_{-1}],[( 1, -4 )-\infty_0 ]$ & $[[ 2, -1 ]],[\;],[[ 5, 1 ]]$ \\
133,B & $[( 0, -1 )-\infty_{-1}],[( 1, -2 )-\infty_0 ]$& $[\;], [\;], [\;]$ \\
  147 & $[( -1, -1 )-\infty_{-1}],[( -3, 7 )-\infty_0 ]$ & $[[ 2, -1 ]],[[ 3,
\frac{1}{2} ]],[[ 7, \frac{1}{2}]]$\\
161 & $[( 1, -5 )-\infty_{-1}],[( \frac{2}{3}, -3 )-\infty_0 ]$ & $[\;],[[ 7,
\frac{1}{2} ]],[[ 5, 1 ],[ 7, \frac{1}{2} ]]$ \\
        165 & $[( -8, -528 )- ( 0, -144)], [( 8, 80 )-( 0, 144)]$ & $[[ 2, 2 ],[ 3, -\frac{1}{2} ]]$,
        $[[ 2, -2 ],[ 11, \frac{1}{2} ],[ 3, \frac{3}{2}]],$\\
      &&  $[[ 2, -2 ],[ 5, \frac{1}{2} ],[ 3, \frac{1}{2} ]]$\\
  177 & $[( 0, 0 )-\infty_{-1}],[( -\frac{2}{3}, -\frac{7}{27} )-\infty_0 ]$ & $[[ 3, 1
]],[\;],[[ 3, -2 ],[17, 1 ]]$\\
188 & $[(0,1)- \infty_{-1}],[(-1,-1)-(2,5)]$ &  $[[2,1]],[[2,\frac{2}{3}]],
[[2,\frac{2}{3}],[5,1]]$\\
191 & $[( 0, -1 )-\infty_{-1}],[( -2, 10 )-\infty_0 ]$ & $[\;],[\;],[[ 11, 1 ]]$ \\
        \hline
            \end{tabularx}
      \end{center}
      \caption{$\;\;$Global generators and intersection data}\label{tabglob}
    \end{table}
    \begin{remark}The generators given for $N=125, A$ are actually generators for an index
2 subgroup of $A(\Q)/A(\Q)_{\tors}$, since an actual set of generators for the full group $A(\Q)/A(\Q)_{\tors}$ whose support solely consisted of non-Weierstrass points was not readily available.
For $N=167$, we had to use generators (of finite index subgroups) represented by divisors with pointwise
$\Q_p$-rational support; see Section~\ref{sec:comp_reg}.
\end{remark}

    For the computation of the special values $\cL_p(A,0)$, we need the normalization factor
    $\delta^+$, so we have to find a fundamental
    discriminant $D>0$ such that for some good ordinary $p_0$ we have $\gcd(p_0N, D)=1$
and the analytic rank of $A_\psi$ is zero, where $\psi$ is the quadratic character
associated to $\Q(\sqrt{D})$. See Subsection~\ref{sec:norm}.
    The real period $\Omega^+_A$ for our Jacobians can be found in
    \cite{empirical}; there it was
    computed using the observation that $(\omega_1,\omega_2)$ as in
Remark~\ref{rk:find_eta} is a basis of integral 1-forms
    for all abelian varieties we consider.
    It is not difficult to show that the corresponding fact also holds for all $A_\psi$ and
    hence, using Remark~\ref{rk:find_eta},  we found that $\eta_\psi=1$ in all~16 cases.
    We list, for each level, the quantities needed to find $\delta^+$ in Table~\ref{t:D}.

\begin{table}[ht]

\centering
        \begin{tabular}{| r|c|c|c|c | c |c| c | c |c|}
              \hline
        $N$ & $D$ & $\eta_\psi$&$p_0$&$[0]^+_{A_\psi}$ &
$\frac{\eta_\psi\cdot
    L(A_\psi,1)}{D\cdot\Omega^+_A}$ &$\delta^+$&$c_v(A_\psi)'s$&$| A_\psi(\Q)_{\textrm{tors}}|$
& $\Sha_\psi?$  \\
        \hline
        67 &5&1&19 &$16$ & 4&$1/4$&    1,1 & 1 &4 \\
        73 & 5&1&11&16 & 4 &$1/4$ &  1,1 & 1 & 4\\
        85 & 61&1&41&64 & 16&$1/4$ &   4,2,8 & 2 & 1\\
        93 &5&1&11 &$-16$ & 4&$-1/4$&    4,1,1 & 1 & 1\\
       103 &5&1&11 &16 & 4&$1/4$  & 1,1 & 1 & 4 \\
   107 &5&1&19 &16 & 4&$1/4$    &1,1 & 1 & 4 \\
   115 & 89&1&11 &64 &16&$1/4$  &   4,1,4 & 1 & 1\\
 125,A &17&1&19 & 16&4 &$1/4$   & 1,1 & 1& 4\\
 133,B & 5&1&29&$-16$ &4 &$-1/4$ &   1,1,1 & 1 & 4 \\
   147 & 5&1&31 &$-16$ & 4&$-1/4$&    2,2,2 & 2 & 2 \\
   161 &53&1&11 & 64& 16&$1/4$  &  4,1,4 & 1 & 1 \\
   165 &89&1 &17&64 & 16&$1/4$  &  4,2,2,16 & 4 & 1\\
   167 & 5&1&31&$-16$ & 4 &$-1/4$ &  1,1 & 1 & 4\\
   177 & 5&1&19&16 &  4&$1/4$  & 1,1,1 & 1 & 4\\
   188 & 233&1&19&144 &36 &$1/4$ &   9,1,4 & 1 & 1\\
   191 &33&1&31 & 16& 4&$1/4$  &  1,1 & 1 & 4 \\
        \hline
    \end{tabular}
\vspace{1ex}
              \caption{Rank zero twist data}\label{t:D}
 \end{table}

For good measure, we also verified:
\begin{proposition}
 The classical (and hence, for all primes $p$ of good ordinary reduction, the $p$-adic) Birch and Swinnerton-Dyer
conjecture holds for all~16 twists $A_\psi$ in Table~\ref{t:D} under the assumption that
$\Sha(A_\psi/\Q)$ is 2-torsion.
\end{proposition}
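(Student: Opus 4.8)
The plan is to reduce the statement, for each of the sixteen twists, to the rank-zero case of the classical conjecture (Conjecture~\ref{bsdabvar}), verify the resulting numerical identity directly from the data in Table~\ref{t:D}, and then obtain the $p$-adic statement formally. Each $A_\psi$ is the Jacobian of the quadratic twist of the corresponding genus-$2$ curve, hence is principally polarized; therefore $A_\psi\cong A_\psi^\vee$, so $|A_\psi^\vee(\Q)_{\tors}|=|A_\psi(\Q)_{\tors}|$ and the denominator in Conjecture~\ref{bsdabvar} becomes $|A_\psi(\Q)_{\tors}|^2$. Each $A_\psi$ was chosen in Subsection~\ref{sec:norm} so that its analytic rank is $0$, i.e. $L(A_\psi,1)\neq0$; by the theorem of Kolyvagin and Logachev for quotients of $J_1(N)$ (or directly from the rank computations in \cite{empirical}) this forces $A_\psi(\Q)$ to be finite, so the Mordell--Weil rank is $0$, the regulator is the empty determinant $1$, and, as $K=\Q$, the factor $\sqrt{|D_K|}^{\,g}$ is $1$. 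Thus the rank part of BSD holds and the formula collapses to
\[
\frac{L(A_\psi,1)}{\Omega^+_{A_\psi}}=\frac{|\Sha(A_\psi/\Q)|\cdot\prod_v c_v(A_\psi)}{|A_\psi(\Q)_{\tors}|^2}.
\]

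Next I would record that every quantity on both sides except $|\Sha(A_\psi/\Q)|$ is known exactly. The left-hand side is rational: by Corollary~\ref{cor:characters} with $g=2$, $D>0$, and $\eta_\psi=1$ (established in all sixteen cases via Remark~\ref{rk:find_eta}), we have $\Omega^+_{A_\psi}=D\cdot\Omega^+_A$, so $L(A_\psi,1)/\Omega^+_{A_\psi}$ equals the column $\eta_\psi L(A_\psi,1)/(D\,\Omega^+_A)$ of Table~\ref{t:D}, which is computable purely algebraically as a lattice index. The Tamagawa numbers $c_v(A_\psi)$ and the torsion orders $|A_\psi(\Q)_{\tors}|$ are the remaining columns of that table, computed as in \cite{empirical}.

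The verification then proceeds row by row. Assuming $\Sha(A_\psi/\Q)$ is $2$-torsion, we have $|\Sha(A_\psi/\Q)|=|\Sha(A_\psi/\Q)[2]|=\Sha_\psi?$, the order obtained from a $2$-descent, and one checks in each of the sixteen rows of Table~\ref{t:D} that
\[
\frac{\eta_\psi\,L(A_\psi,1)}{D\cdot\Omega^+_A}=\frac{\Sha_\psi?\cdot\prod_v c_v(A_\psi)}{|A_\psi(\Q)_{\tors}|^2}
\]
holds identically (for instance $16=1\cdot 64/2^2$ at $N=85$, and $4=4\cdot1/1^2$ at $N=67$). This establishes the classical conjecture for each $A_\psi$. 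For the $p$-adic statement, fix a good ordinary prime $p$; by the interpolation property recorded after Conjecture~\ref{pbsd}, in rank $0$ the leading term is $\cL_p^*(A_\psi,0)=\cL_p(A_\psi,0)=\epsilon_p(A_\psi)\cdot L(A_\psi,1)/\Omega^+_{A_\psi}$, so Conjecture~\ref{pbsd} for $A_\psi$ is equivalent to the classical identity just verified, provided $\epsilon_p(A_\psi)\neq0$.

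The genuinely nontrivial inputs, rather than the final arithmetic, are where I expect the difficulty to lie: producing the exact value of $L(A_\psi,1)/\Omega^+_{A_\psi}$ and, above all, computing $\Sha_\psi?=|\Sha(A_\psi/\Q)[2]|$ by an honest $2$-descent on a Jacobian of a genus-$2$ curve, which the $2$-torsion hypothesis then upgrades to the full order $|\Sha(A_\psi/\Q)|$. I would also attend to two smaller points: the identification $|A_\psi^\vee(\Q)_{\tors}|=|A_\psi(\Q)_{\tors}|$, which uses that a Jacobian is principally polarized, and the nonvanishing of $\epsilon_p(A_\psi)$ needed for the $p$-adic implication, which fails only in the exceptional trivial-zero situation where some unit root $\alpha^\sigma$ equals $1$ and which is excluded by the hypotheses of our algorithms.
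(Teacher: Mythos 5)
Your proposal is correct and follows essentially the same route as the paper, whose entire proof is the pointer ``See Table~\ref{t:D}, noting that $\Sha_\psi?$ is equal to $|\Sha(A_\psi/\Q)[2]|$'': you have simply made explicit the reduction to the rank-zero formula, the row-by-row check against the table, and the rank-zero equivalence of the classical and $p$-adic statements via the interpolation property, all of which the paper leaves implicit. The only small inaccuracy is your last remark: in the good (ordinary) reduction case the Weil bound $|\sigma(a_p)|\le 2\sqrt{p}<p+1$ rules out $\alpha^\sigma=1$ outright, so $\epsilon_p(A_\psi)\neq 0$ is automatic rather than a hypothesis of the algorithms (trivial zeros occur only for split multiplicative reduction, which is outside the scope of Conjecture~\ref{pbsd}).
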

\begin{proof}
See Table~\ref{t:D}, noting that $\Sha_\psi?$ is equal to $|\Sha(A_\psi/\Q)[2]|$.
\end{proof}
\begin{remark}
For the computation of the Tamagawa numbers we used
\cite[Theorem~1.17]{bosch-liu:rat_pts}.
Suppose that $v\mid{}D$, but that $v$  does not divide the conductor of $A$.
Then the
twisted curve $X_\psi$ has bad reduction at $v$, but acquires good reduction
over a quadratic extension.
The classification of Namikawa and Ueno \cite{namikawa-ueno}
shows that in this case $X_\psi$ must have reduction type $[I^*_{0-0-0}]$ at $v$.
Since the geometric component group of the N\'eron model is isomorphic to
$(\Z/2\Z)^4$ for this reduction type, we always have $c_v(A_\psi)\mid 16$.
\end{remark}

The tables below show the specific primes $p$ and precision $O(p^n)$ for each level
$N$ for which we have tested Conjecture~\ref{pbsd}.

\begin{remark}\label{rem:mod}A note on our models and choices of primes. Since our
$p$-adic heights algorithm requires that the curve be given by an odd degree model, for
each curve $y^2 = g(x)$, we consider those good ordinary primes $p$ for which $g(x)$ has a $\Q_p$-rational zero and do another change of coordinates to obtain the odd model $y^2 = f(x)$, with $f(x) \in \Q_p[x]$. We compute the $p$-adic regulators and $p$-adic $L$-values for these primes.\end{remark}

\clearpage

\subsection{\textbf{$N=67$}} We have the following data:\\

\begin{center}
    \begin{tabular}{| l | l | l |}
    \hline
$p$-adic regulator $\Reg_p(A/\Q)$ & $p$-adic $L$-value & $p$-adic multiplier $\epsilon_p(A)$ \\
\hline
$ 905422 + O(7^{8}) $ & $ 4616447 + O(7^{8}) $ & $ 953283 + O(7^{8}) $\\
$ 655636176 + O(13^{8}) $ & $ 3718847 + O(13^{8})$ & $121846702 + O(13^{8}) $\\
$ 6411910349 + O(17^{8}) $ & $ 490126740 + O(17^{8})$ & $ 2996208382 + O(17^{8}) $\\
$ 1955457580 + O(19^{8}) $ & $ 205789013 + O(19^{8})$ & $6722090086 + O(19^{8}) $\\
$ 6490501114813 + O(37^{9}) $ & $ 1520740814200 + O(37^{8})$ &$ 1763856795912 + O(37^{8}) $\\
$ 119112862323467 + O(41^{9}) $ & $ 6899026979535 + O(41^{8})$ & $ 604530321123 + O(41^{8}) $\\
$ 231768637543452 + O(43^{9}) $ & $ 11662319738050 + O(43^{8})$ & $ 11664993765232 + O(43^{8}) $\\
$ 258343847102710 + O(47^{9}) $ & $ 6617527122585 + O(47^{8}) $ & $ 21577206386081 + O(47^{8}) $\\
$ 7291679100956850 + O(59^{9}) $ & $ 72703739307529 + O(59^{8}) $& $ 77184936742982 + O(59^{8}) $\\
$ 6048812062982476 + O(61^{9}) $ & $ 174305066216353 + O(61^{8}) $& $ 133406272889885 + O(61^{8}) $\\
$ 53277934412195075 + O(73^{9}) $ & $ 552479201354189 + O(73^{8})$ & $ 460739420635942 + O(73^{8}) $\\
$ 9278983589215557 + O(79^{9}) $ & $ 88027589402068 + O(79^{8}) $ & $ 801037408797804 + O(79^{8}) $\\
$ 157708559779041510 + O(83^{9}) $ & $ 1578704504708054 + O(83^{8})$ & $ 162512920516158 + O(83^{8}) $\\
\hline
 \end{tabular}
 \end{center}

 \vspace{.5in}

\subsection{\textbf{$N=73$}} We have the following data:\\

\begin{center}
    \begin{tabular}{| l | l | l |}
    \hline
$p$-adic regulator $\Reg_p(A/\Q)$ & $p$-adic $L$-value & $p$-adic multiplier $\epsilon_p(A)$ \\
\hline
$ 163731997 + O(11^{8}) $ & $ 183868925 + O(11^{8}) $ & $
192773925 + O(11^{8}) $\\
$ 482988818 + O(13^{8}) $ & $ 522787644 + O(13^{8}) $ & $
757562196 + O(13^{8}) $\\
$ 51174691892 + O(23^{8}) $ & $ 46581832325 + O(23^{8}) $ & $
48224542827 + O(23^{8}) $\\
$ 553299007790 + O(31^{8}) $ & $ 440555494391 + O(31^{8}) $ & $
850258335981 + O(31^{8}) $\\
$ 5421948177967 + O(41^{8}) $ & $ 5077531013725 + O(41^{8}) $ &
$ 5384419950679 + O(41^{8}) $\\
$ 38176784853304 + O(59^{8}) $ & $ 63020796753579 + O(59^{8}) $
& $ 113039802800992 + O(59^{8}) $\\
$ 70602302343232 + O(61^{8}) $ & $ 139895606364222 + O(61^{8}) $
& $ 79733480381568 + O(61^{8}) $\\
$ 433639741922965 + O(71^{8}) $ & $ 576931954734067 + O(71^{8}) $
& $ 8989266238661 + O(71^{8}) $\\
$ 589304115938460 + O(83^{8}) $ & $ 347866087087015 + O(83^{8}) $
& $ 720001059253854 + O(83^{8}) $\\
$ 6769596692483671 + O(97^{8}) $ & $ 4269348271 + O(97^{5}) $ &
$ 3993521258998096 + O(97^{8}) $\\
\hline
 \end{tabular}
\end{center}

 \vspace{.5in}

\subsection{\textbf{$N=85$}} We have the following data:\\

\begin{center}
    \begin{tabular}{| l | l | l |}
    \hline
$p$-adic regulator $\Reg_p(A/\Q)$ & $p$-adic $L$-value & $p$-adic multiplier $\epsilon_p(A)$ \\
\hline
$ 1015073423894 + O(37^{8}) $ & $ 167411116045 + O(37^{8}) $ & $
1002150510104 + O(37^{8}) $\\
$ 6819810980339 + O(41^{8}) $ & $ 7975900636623 + O(41^{8}) $ &
$ 7153783865856 + O(41^{8}) $\\
$ 8962714100713 + O(53^{8}) $ & $ 1069648287223 + O(53^{7}) $ &
$ 44683460285079 + O(53^{8}) $\\
$ 43568329449 + O(61^{6}) $ & $ 3136884016567 + O(61^{7}) $ & $
72019680061615 + O(61^{8}) $\\
$ 119416997911215 + O(73^{8}) $ & $ 683019204724944 + O(73^{8}) $
& $ 32602153132641 + O(73^{8}) $\\
$ 1942338381733272 + O(89^{8}) $ & $ 3482744225118281 + O(89^{8}) $
& $ 1038134293650945 + O(89^{8}) $\\
$ 5147606270477176 + O(97^{8}) $ & $ 2836855197 + O(97^{5}) $ &
$ 3784121167774074 + O(97^{8}) $\\
\hline
 \end{tabular}
\end{center}

 \clearpage
 \subsection{\textbf{$N=93$}} We have the following data:\\

\begin{center}
    \begin{tabular}{| l | l | l |}
    \hline
$p$-adic regulator $\Reg_p(A/\Q)$ & $p$-adic $L$-value & $p$-adic multiplier $\epsilon_p(A)$ \\
\hline
$ 185741420 + O(11^{8}) $ & $ 151839057 + O(11^{8}) $ & $
11051656 + O(11^{8}) $\\
$ 405483221 + O(13^{8}) $ & $ 670790176 + O(13^{8}) $ & $
230057694 + O(13^{8}) $\\
$ 43885519955 + O(23^{8}) $ & $ 26161319539 + O(23^{8}) $ & $
47769373949 + O(23^{8}) $\\
$ 336788503314 + O(29^{8}) $ & $ 484038257980 + O(29^{8}) $ & $
222828561623 + O(29^{8}) $\\
$ 1678741468628 + O(37^{8}) $ & $ 2569674002391 + O(37^{8}) $ &
$ 1378603735422 + O(37^{8}) $\\
$ 5324074002210 + O(43^{8}) $ & $ 8725984878581 + O(43^{8}) $ &
$ 3160248767946 + O(43^{8}) $\\
$ 16824305598488 + O(47^{8}) $ & $ 15669575995471 + O(47^{8}) $
& $ 8286455636222 + O(47^{8}) $\\
$ 4960862919215 + O(53^{8}) $ & $ 49317038818954 + O(53^{8}) $ &
$ 23178143892193 + O(53^{8}) $\\
$ 143070222270789 + O(61^{8}) $ & $ 92506965732666 + O(61^{8}) $
& $ 122923764909639 + O(61^{8}) $\\
$ 279322082363042 + O(67^{8}) $ & $ 74111413499770 + O(67^{8}) $
& $ 180555937022120 + O(67^{8}) $\\
$ 430169136747961 + O(73^{8}) $ & $ 471513912864315 + O(73^{8}) $
& $ 411163460552347 + O(73^{8}) $\\
$ 1384453915387035 + O(79^{8}) $ & $ 1232301086171477 + O(79^{8}) $
& $ 503819572894975 + O(79^{8}) $\\
$ 2228621109604011 + O(83^{8}) $ & $ 1304530016876211 + O(83^{8}) $
& $ 1071794429225898 + O(83^{8}) $\\
$ 1081659147745931 + O(89^{8}) $ & $ 1330994142123689 + O(89^{8}) $
& $ 1518075886594725 + O(89^{8}) $\\
\hline
 \end{tabular}
\end{center}

 \vspace{.5in}

 \subsection{\textbf{$N=103$}} We have the following data:\\

\begin{center}
    \begin{tabular}{| l | l | l |}
    \hline
$p$-adic regulator $\Reg_p(A/\Q)$ & $p$-adic $L$-value & $p$-adic multiplier $\epsilon_p(A)$ \\
\hline
$ 147377758 + O(11^{8}) $ & $ 86486502 + O(11^{8}) $ & $
192773925 + O(11^{8}) $\\
$ 489193484 + O(13^{8}) $ & $ 67428377 + O(13^{8}) $ & $
337691501 + O(13^{8}) $\\
$ 15204606664 + O(19^{8}) $ & $ 10638300382 + O(19^{8}) $ & $
12173049603 + O(19^{8}) $\\
$ 66216995216 + O(23^{8}) $ & $ 18109392006 + O(23^{8}) $ & $
45043095109 + O(23^{8}) $\\
$ 5372718408 + O(29^{8}) $ & $ 76731347688 + O(29^{8}) $ & $
12536647436 + O(29^{8}) $\\
$ 6799091682040 + O(41^{8}) $ & $ 4391281006909 + O(41^{8}) $ &
$ 5303120857798 + O(41^{8}) $\\
$ 23467041445332 + O(47^{8}) $ & $ 5937816898560 + O(47^{8}) $ &
$ 1847891549858 + O(47^{8}) $\\
$ 9449958206985 + O(53^{8}) $ & $ 6585582284426 + O(53^{8}) $ &
$ 47071170371848 + O(53^{8}) $\\
$ 55788681659810 + O(59^{8}) $ & $ 58416917952322 + O(59^{8}) $
& $ 134523844728309 + O(59^{8}) $\\
$ 180708198470076 + O(61^{8}) $ & $ 86273076603078 + O(61^{8}) $
& $ 91320952633362 + O(61^{8}) $\\
$ 304798054862709 + O(71^{8}) $ & $ 23644536785282 + O(71^{8}) $
& $ 318837731560077 + O(71^{8}) $\\
$ 651632900917334 + O(73^{8}) $ & $ 128186925484 + O(73^{6}) $ &
$ 86431680403618 + O(73^{8}) $\\
$ 1422073004111088 + O(79^{8}) $ & $ 162819364440040 + O(79^{8}) $
& $ 1353067258168647 + O(79^{8}) $\\
$ 2204776989584744 + O(83^{8}) $ & $ 290525162365 + O(83^{6}) $
& $ 1489257165084816 + O(83^{8}) $\\
$ 3419478873681093 + O(89^{8}) $ & $ 13722104837501 + O(89^{7}) $
& $ 2011257469143583 + O(89^{8}) $\\
$ 5187359554281130 + O(97^{8}) $ & $ 588713923936 + O(97^{6}) $
& $ 291254315420391 + O(97^{8}) $\\
\hline
 \end{tabular}
\end{center}

 \clearpage
 \subsection{\textbf{$N=107$}} We have the following data:\\

\begin{center}
    \begin{tabular}{| l | l | l |}
    \hline
$p$-adic regulator $\Reg_p(A/\Q)$ & $p$-adic $L$-value & $p$-adic multiplier $\epsilon_p(A)$ \\
\hline
$ 100037184 + O(13^{8}) $ & $ 381034778 + O(13^{8}) $ & $
725504508 + O(13^{8}) $\\
$ 5164824485 + O(17^{8}) $ & $ 2251756830 + O(17^{8}) $ & $
6548185060 + O(17^{8}) $\\
$ 4948122310 + O(19^{8}) $ & $ 410682533 + O(19^{8}) $ & $
11770828305 + O(19^{8}) $\\
$ 2233155353996 + O(37^{8}) $ & $ 782254360600 + O(37^{8}) $ & $
499167048517 + O(37^{8}) $\\
$ 7693933727093 + O(41^{8}) $ & $ 1126806110759 + O(41^{8}) $ &
$ 941047246375 + O(41^{8}) $\\
$ 1985728518871 + O(43^{8}) $ & $ 11350309489829 + O(43^{8}) $ &
$ 1490080422844 + O(43^{8}) $\\
$ 9236325503676 + O(47^{8}) $ & $ 9352977397857 + O(47^{8}) $ &
$ 3877324130340 + O(47^{8}) $\\
$ 66592510503713 + O(59^{8}) $ & $ 126498662012390 + O(59^{8}) $
& $ 66521146158463 + O(59^{8}) $\\
$ 73605475872145 + O(61^{8}) $ & $ 67998854641813 + O(61^{8}) $
& $ 149495311709314 + O(61^{8}) $\\
$ 215631855830774 + O(67^{8}) $ & $ 276136144242399 + O(67^{8}) $
& $ 300204582979356 + O(67^{8}) $\\
$ 235988007934369 + O(71^{8}) $ & $ 4203263257242 + O(71^{7}) $
& $ 297538516047502 + O(71^{8}) $\\
$ 1405009654786451 + O(79^{8}) $ & $ 1425538781612665 + O(79^{8}) $
& $ 637219753066297 + O(79^{8}) $\\
$ 966246807067004 + O(83^{8}) $ & $ 29073683 + O(83^{5}) $ & $
122301722091732 + O(83^{8}) $\\
\hline
 \end{tabular}
\end{center}

 \subsection{\textbf{$N=115$}} We have the following data:\\

\begin{center}
    \begin{tabular}{| l | l | l |}
    \hline
$p$-adic regulator $\Reg_p(A/\Q)$ & $p$-adic $L$-value & $p$-adic multiplier $\epsilon_p(A)$ \\
\hline
$ 151819184 + O(11^{8}) $ & $ 96031694 + O(11^{8}) $ & $
55470083 + O(11^{8}) $\\
$ 6070540659 + O(17^{8}) $ & $ 2602174031 + O(17^{8}) $ & $
2479111430 + O(17^{8}) $\\
$ 443043366998 + O(37^{8}) $ & $ 3207000318071 + O(37^{8}) $ & $
518402902203 + O(37^{8}) $\\
$ 10506890337861 + O(43^{8}) $ & $ 466034248434 + O(43^{8}) $ &
$ 3160248767946 + O(43^{8}) $\\
$ 25938666299194 + O(53^{8}) $ & $ 43404652273198 + O(53^{8}) $
& $ 24704105954182 + O(53^{8}) $\\
$ 68828469915327 + O(59^{8}) $ & $ 15822514736163 + O(59^{8}) $
& $ 112268718282797 + O(59^{8}) $\\
$ 117125015025879 + O(61^{8}) $ & $ 99513360280408 + O(61^{8}) $
& $ 144234417021077 + O(61^{8}) $\\
$ 117261157211649 + O(67^{8}) $ & $ 369365142758789 + O(67^{8}) $
& $ 388573100762289 + O(67^{8}) $\\
$ 38346420175144 + O(79^{8}) $ & $ 1042744621946608 + O(79^{8}) $
& $ 856522414733559 + O(79^{8}) $\\
$ 232154244720909 + O(83^{8}) $ & $ 1238796074898239 + O(83^{8}) $
& $ 2232922964727286 + O(83^{8}) $\\
$ 3680613169329886 + O(89^{8}) $ & $ 1982664616252635 + O(89^{8}) $
& $ 1447736508567520 + O(89^{8}) $\\
$ 337111037730418 + O(97^{8}) $ & $ 5523549952859660 + O(97^{8}) $
& $ 5537610452725212 + O(97^{8}) $\\
\hline
 \end{tabular}
\end{center}

 \subsection{\textbf{$N=125, A$}} We have the following data:\\

\begin{center}
    \begin{tabular}{| l | l | l |}
    \hline
$p$-adic regulator $\Reg_p(A/\Q)$ & $p$-adic $L$-value & $p$-adic multiplier $\epsilon_p(A)$ \\
\hline
$ 298562498 + O(13^{8}) $ & $ 592894408 + O(13^{8}) $ & $
337691501 + O(13^{8}) $\\
$ 6712555657 + O(19^{8}) $ & $ 7153379737 + O(19^{8}) $ & $
7352726322 + O(19^{8}) $\\
$ 28761182485 + O(23^{8}) $ & $ 19244567041 + O(23^{8}) $ & $
47769373949 + O(23^{8}) $\\
$ 1610334394992 + O(37^{8}) $ & $ 2619837199442 + O(37^{8}) $ &
$ 518402902203 + O(37^{8}) $\\
$ 5827125727855 + O(47^{8}) $ & $ 2438975823319 + O(47^{8}) $ &
$ 20137488978024 + O(47^{8}) $\\
$ 43827481404730 + O(53^{8}) $ & $ 12237145144494 + O(53^{8}) $
& $ 45444073360562 + O(53^{8}) $\\
$ 934069839446 + O(59^{8}) $ & $ 138331812786050 + O(59^{8}) $ &
$ 119463235911829 + O(59^{8}) $\\
$ 94940897306587 + O(61^{8}) $ & $ 86820693223899 + O(61^{8}) $
& $ 134545325721836 + O(61^{8}) $\\
$ 344652595573416 + O(67^{8}) $ & $ 285200220171958 + O(67^{8}) $
& $ 55395450190703 + O(67^{8}) $\\
$ 494778091759992 + O(73^{8}) $ & $ 211635131306627 + O(73^{8}) $
& $ 338214791799846 + O(73^{8}) $\\
$ 911058348384486 + O(83^{8}) $ & $ 15062127863580 + O(83^{7}) $
& $ 1695835531921770 + O(83^{8}) $\\
$ 3812663593637783 + O(89^{8}) $ & $ 140584030153 + O(89^{6}) $
& $ 3229221353736449 + O(89^{8}) $\\
$ 7743507247513256 + O(97^{8}) $ & $ 72751227747664 + O(97^{7}) $
& $ 6602401135477806 + O(97^{8}) $\\
\hline
 \end{tabular}
\end{center}

  \subsection{\textbf{$N=133, B$}} We have the following data:\\

\begin{center}
    \begin{tabular}{| l | l | l |}
    \hline
$p$-adic regulator $\Reg_p(A/\Q)$ & $p$-adic $L$-value & $p$-adic multiplier $\epsilon_p(A)$ \\
\hline
$ 4554714851 + O(17^{8}) $ & $ 1400369830 + O(17^{8}) $ & $
1530767973 + O(17^{8}) $\\
$ 482641533 + O(29^{6}) $ & $ 224834369110 + O(29^{8}) $ & $
188246220652 + O(29^{8}) $\\
$ 285247284517 + O(31^{8}) $ & $ 644745508559 + O(31^{8}) $ & $
65426082523 + O(31^{8}) $\\
$ 873461875052 + O(41^{8}) $ & $ 5913841764921 + O(41^{8}) $ & $
5173622706020 + O(41^{8}) $\\
$ 6395433286380 + O(43^{8}) $ & $ 5250591893580 + O(43^{8}) $ &
$ 7173815953060 + O(43^{8}) $\\
$ 40174155934745 + O(53^{8}) $ & $ 9436443664 + O(53^{6}) $ & $
47071170371848 + O(53^{8}) $\\
$ 388303423009987 + O(67^{8}) $ & $ 74275805470 + O(67^{6}) $ &
$ 287829738202699 + O(67^{8}) $\\
$ 582542046575002 + O(73^{8}) $ & $ 47404160292 + O(73^{6}) $ &
$ 214334244118640 + O(73^{8}) $\\
$ 997934987934019 + O(79^{8}) $ & $ 1692929332309 + O(79^{7}) $
& $ 85649658584845 + O(79^{8}) $\\
$ 337083794306147 + O(83^{8}) $ & $ 8815903470 + O(83^{6}) $ & $
1446666792043837 + O(83^{8}) $\\
$ 3826161118964265 + O(89^{8}) $ & $ 689438763 + O(89^{5}) $ & $
1571471061650586 + O(89^{8}) $\\
\hline
 \end{tabular}
\end{center}

  \subsection{\textbf{$N=147$}} We have the following data:\\

\begin{center}
    \begin{tabular}{| l | l | l |}
    \hline
$p$-adic regulator $\Reg_p(A/\Q)$ & $p$-adic $L$-value & $p$-adic multiplier $\epsilon_p(A)$ \\
\hline
$ 434194800 + O(13^{8}) $ & $ 772553365 + O(13^{8}) $ & $
69777210 + O(13^{8}) $\\
$ 3085885399 + O(19^{8}) $ & $ 14351355419 + O(19^{8}) $ & $
10124513344 + O(19^{8}) $\\
$ 57105870 + O(23^{6}) $ & $ 56314647135 + O(23^{8}) $ & $
77688619426 + O(23^{8}) $\\
$ 598807495296 + O(31^{8}) $ & $ 500890389807 + O(31^{8}) $ & $
226083261470 + O(31^{8}) $\\
$ 1255556858069 + O(37^{8}) $ & $ 728534804896 + O(37^{8}) $ & $
925099803678 + O(37^{8}) $\\
$ 3028914438423 + O(43^{8}) $ & $ 5310811645878 + O(43^{8}) $ &
$ 1448245155768 + O(43^{8}) $\\
$ 21722415097178 + O(53^{8}) $ & $ 32574036544128 + O(53^{8}) $
& $ 14098536063957 + O(53^{8}) $\\
$ 9696531871680 + O(61^{8}) $ & $ 147051772023912 + O(61^{8}) $
& $ 127085340697404 + O(61^{8}) $\\
$ 252460432397529 + O(67^{8}) $ & $ 2933215623449 + O(67^{7}) $
& $ 15805729099128 + O(67^{8}) $\\
$ 319985315705867 + O(71^{8}) $ & $ 4744056079140 + O(71^{7}) $
& $ 454718387048106 + O(71^{8}) $\\
$ 696485497462517 + O(73^{8}) $ & $ 75645384726 + O(73^{6}) $ &
$ 263081220212640 + O(73^{8}) $\\
$ 1036811888178773 + O(79^{8}) $ & $ 965207536 + O(79^{5}) $ & $
74020902743243 + O(79^{8}) $\\
$ 4273472549945572 + O(97^{8}) $ & $ 6770845150 + O(97^{5}) $ &
$ 7422648274246094 + O(97^{8}) $\\
\hline
 \end{tabular}
\end{center}

   \subsection{\textbf{$N=161$}} We have the following data:\\

\begin{center}
    \begin{tabular}{| l | l | l |}
    \hline
$p$-adic regulator $\Reg_p(A/\Q)$ & $p$-adic $L$-value & $p$-adic multiplier $\epsilon_p(A)$ \\
\hline
$ 171933135 + O(11^{8}) $ & $ 104178769 + O(11^{8}) $ & $
48803991 + O(11^{8}) $\\
$ 16676191757 + O(19^{8}) $ & $ 8396822512 + O(19^{8}) $ & $
10186228540 + O(19^{8}) $\\
$ 3000539180980 + O(37^{8}) $ & $ 2959738471101 + O(37^{8}) $ &
$ 1378603735422 + O(37^{8}) $\\
$ 4799012913812 + O(43^{8}) $ & $ 820015420 + O(43^{6}) $ & $
7358928540810 + O(43^{8}) $\\
$ 33038825747471 + O(53^{8}) $ & $ 14752347930 + O(53^{6}) $ & $
37415160388754 + O(53^{8}) $\\
$ 144048375212404 + O(59^{8}) $ & $ 67829338510607 + O(59^{8}) $
& $ 113039802800992 + O(59^{8}) $\\
$ 989091293021 + O(61^{8}) $ & $ 62763431617869 + O(61^{8}) $ &
$ 191252449121304 + O(61^{8}) $\\
$ 93625125465306 + O(67^{8}) $ & $ 43269028077 + O(67^{6}) $ & $
228681540167106 + O(67^{8}) $\\
$ 372742847896101 + O(79^{8}) $ & $ 16260012523515 + O(79^{7}) $
& $ 1380060506871347 + O(79^{8}) $\\
$ 133689266642605 + O(83^{8}) $ & $ 27328857470 + O(83^{6}) $ &
$ 1997163923487638 + O(83^{8}) $\\
$ 109602346601919 + O(89^{8}) $ & $ 3775670578 + O(89^{5}) $ & $
404117712562583 + O(89^{8}) $\\
$ 4449889265258731 + O(97^{8}) $ & $ 8056056109 + O(97^{5}) $ &
$ 3796862465389610 + O(97^{8}) $\\
\hline
 \end{tabular}
\end{center}

   \subsection{\textbf{$N=165$}} We have the following data:\\

\begin{center}
    \begin{tabular}{| l | l | l |}
    \hline
$p$-adic regulator $\Reg_p(A/\Q)$ & $p$-adic $L$-value & $p$-adic multiplier $\epsilon_p(A)$ \\
\hline
$ 2478665 + O(7^{9}) $ & $ 988615 + O(7^{8}) $ & $ 2047938 +
O(7^{8}) $\\
$ 7577669996 + O(13^{9}) $ & $ 546478360 + O(13^{8}) $ & $
19120487 + O(13^{8}) $\\
$ 1345832 + O(17^{5}) $ & $ 115518752 + O(17^{8}) $ & $
6743866153 + O(17^{8}) $\\
$ 317314039860 + O(19^{9}) $ & $ 15454527827 + O(19^{8}) $ & $
16701261693 + O(19^{8}) $\\
$ 1197529401836 + O(23^{9}) $ & $ 21430827992 + O(23^{8}) $ & $
32283075894 + O(23^{8}) $\\
$ 182820405709 + O(29^{9}) $ & $ 462793840863 + O(29^{8}) $ & $
167834932128 + O(29^{8}) $\\
$ 30402585606264 + O(37^{9}) $ & $ 145763317789 + O(37^{8}) $ &
$ 1905855970461 + O(37^{8}) $\\
$ 300867423531184 + O(41^{9}) $ & $ 4638175450295 + O(41^{8}) $
& $ 7243944162192 + O(41^{8}) $\\
$ 453491841293220 + O(43^{9}) $ & $ 10502890759714 + O(43^{8}) $
& $ 981850330755 + O(43^{8}) $\\
$ 841500704008115 + O(47^{9}) $ & $ 22177682954670 + O(47^{8}) $
& $ 15840901508219 + O(47^{8}) $\\
$ 108480654690546 + O(53^{9}) $ & $ 812837848921 + O(53^{7}) $ &
$ 17925543534180 + O(53^{8}) $\\
$ 5924946879989069 + O(59^{9}) $ & $ 2021593887077 + O(59^{7}) $
& $ 32899436516884 + O(59^{8}) $\\
$ 11004676059690151 + O(61^{9}) $ & $ 1114772875983 + O(61^{7}) $
& $ 102536167075224 + O(61^{8}) $\\
$ 6402333518135195 + O(67^{9}) $ & $ 1539009404714 + O(67^{7}) $
& $ 15805729099128 + O(67^{8}) $\\
$ 18266464992713450 + O(71^{9}) $ & $ 98457973781 + O(71^{6}) $
& $ 34536869719889 + O(71^{8}) $\\
$ 42183534718264644 + O(73^{9}) $ & $ 6514036760733 + O(73^{7}) $
& $ 39739465931437 + O(73^{8}) $\\
$ 1224455456912234 + O(79^{8}) $ & $ 1812891052 + O(79^{5}) $ &
$ 186442021878008 + O(79^{8}) $\\
$ 124036966428761339 + O(83^{9}) $ & $ 203487283131 + O(83^{6}) $
& $ 1504086404024377 + O(83^{8}) $\\
$ 243696118400513337 + O(89^{9}) $ & $ 2097535192 + O(89^{5}) $
& $ 1826662988317474 + O(89^{8}) $\\
$ 635540819872824429 + O(97^{9}) $ & $ 1063985237 + O(97^{5}) $
& $ 4391966065852909 + O(97^{8}) $\\
\hline
 \end{tabular}
\end{center}

 \vspace{.5in}

  \subsection{\textbf{$N=167$}} We have the following data:\\

\begin{center}
    \begin{tabular}{| l | l | l |}
    \hline
$p$-adic regulator $\Reg_p(A/\Q)$ & $p$-adic $L$-value & $p$-adic multiplier $\epsilon_p(A)$ \\
\hline
$ 19432714 + O(7^{9}) $ & $ 2251 + O(7^{4}) $ & $ 307185 +
O(7^{8}) $\\
$ 13117611 + O(13^{7}) $ & $ 666390377 + O(13^{8}) $ & $
526042526 + O(13^{8}) $\\
$ 1908862518313 + O(17^{10}) $ & $ 2314174880 + O(17^{8}) $ & $
5234654956 + O(17^{8}) $\\
$ 2053675284265 + O(19^{10}) $ & $ 7656154501 + O(19^{8}) $ & $
14340680958 + O(19^{8}) $\\
$ 27719111127295 + O(23^{10}) $ & $ 41736439730 + O(23^{8}) $ &
$ 16647712571 + O(23^{8}) $\\
$ 405640880151858 + O(31^{10}) $ & $ 714765172682 + O(31^{8}) $
& $ 358709025654 + O(31^{8}) $\\
$ 57769565310991429 + O(53^{10}) $ & $ 19704952386 + O(53^{6}) $
& $ 17681237786119 + O(53^{8}) $\\
$ 501328316424338015 + O(59^{10}) $ & $ 64487204069600 + O(59^{8}) $
& $ 140344643451642 + O(59^{8}) $\\
$ 1243828341260907954 + O(71^{10}) $ & $ 56573288611 + O(71^{6}) $
& $ 557648531014830 + O(71^{8}) $\\
$ 2079988387733147685 + O(73^{10}) $ & $ 4104591 + O(73^{4}) $ &
$ 759308640111719 + O(73^{8}) $\\
$ 20365783254113182401 + O(89^{10}) $ & $ 5065696436 + O(89^{5}) $
& $ 1447017073110591 + O(89^{8}) $\\
$ 67990777180272953115 + O(97^{10}) $ & $ 23364634 + O(97^{4}) $
& $ 6376229493766338 + O(97^{8}) $\\
\hline
 \end{tabular}
\end{center}

 \clearpage
   \subsection{\textbf{$N=177$}} We have the following data:\\

\begin{center}
    \begin{tabular}{| l | l | l |}
    \hline
$p$-adic regulator $\Reg_p(A/\Q)$ & $p$-adic $L$-value & $p$-adic multiplier $\epsilon_p(A)$ \\
\hline
$ 1072267 + O(7^{8}) $ & $ 1192 + O(7^{4}) $ & $ 507488 +
O(7^{8}) $\\
$ 9772408 + O(19^{6}) $ & $ 2558009183 + O(19^{8}) $ & $
11268267357 + O(19^{8}) $\\
$ 27690468499 + O(23^{8}) $ & $ 51308343838 + O(23^{8}) $ & $
3788873485 + O(23^{8}) $\\
$ 141718660962 + O(29^{8}) $ & $ 391909937451 + O(29^{8}) $ & $
65127401733 + O(29^{8}) $\\
$ 265288097732 + O(31^{8}) $ & $ 167635394515 + O(31^{8}) $ & $
519021947371 + O(31^{8}) $\\
$ 1019326123826 + O(37^{8}) $ & $ 2370016933013 + O(37^{8}) $ &
$ 1021993916814 + O(37^{8}) $\\
$ 952644023485 + O(41^{8}) $ & $ 5888249521909 + O(41^{8}) $ & $
5919384948361 + O(41^{8}) $\\
$ 21867793727731 + O(47^{8}) $ & $ 3399186192198 + O(47^{8}) $ &
$ 18039627327365 + O(47^{8}) $\\
$ 54813744728211 + O(61^{8}) $ & $ 136070004398022 + O(61^{8}) $
& $ 149495311709314 + O(61^{8}) $\\
$ 374976464608823 + O(73^{8}) $ & $ 92506712920 + O(73^{6}) $ &
$ 469524064138469 + O(73^{8}) $\\
$ 2024750045809193 + O(83^{8}) $ & $ 1537698302 + O(83^{6}) $ &
$ 2034477952337988 + O(83^{8}) $\\
\hline
 \end{tabular}
\end{center}

 \vspace{.5in}

   \subsection{\textbf{$N=188$}} We have the following data:\\

\begin{center}
    \begin{tabular}{| l | l | l |}
    \hline
$p$-adic regulator $\Reg_p(A/\Q)$ & $p$-adic $L$-value & $p$-adic multiplier $\epsilon_p(A)$ \\
\hline
$ 5623044 + O(7^{8}) $ & $ 1259 + O(7^{4}) $ & $ 507488 +
O(7^{8}) $\\
$ 4478725 + O(11^{7}) $ & $ 150222285 + O(11^{8}) $ & $
143254320 + O(11^{8}) $\\
$ 775568547 + O(13^{8}) $ & $ 237088204 + O(13^{8}) $ & $
523887415 + O(13^{8}) $\\
$ 1129909080 + O(17^{8}) $ & $ 6922098082 + O(17^{8}) $ & $
4494443586 + O(17^{8}) $\\
$ 14409374565 + O(19^{8}) $ & $ 15793371104 + O(19^{8}) $ & $
4742010391 + O(19^{8}) $\\
$ 31414366115 + O(23^{8}) $ & $ 210465118 + O(23^{8}) $ & $
45043095109 + O(23^{8}) $\\
$ 2114154456754 + O(37^{8}) $ & $ 1652087821140 + O(37^{8}) $ &
$ 1881820314237 + O(37^{8}) $\\
$ 6279643012659 + O(41^{8}) $ & $ 2066767021277 + O(41^{8}) $ &
$ 4367414685819 + O(41^{8}) $\\
$ 9585122287133 + O(43^{8}) $ & $ 3309737400961 + O(43^{8}) $ &
$ 85925017348 + O(43^{8}) $\\
$ 3328142761956 + O(53^{8}) $ & $ 5143002859 + O(53^{6}) $ & $
6112104707558 + O(53^{8}) $\\
$ 17411023818285 + O(59^{8}) $ & $ 7961878705 + O(59^{6}) $ & $
98405729721193 + O(59^{8}) $\\
$ 102563258757138 + O(61^{8}) $ & $ 216695090848 + O(61^{7}) $ &
$ 137187998566490 + O(61^{8}) $\\
$ 26014679325501 + O(67^{8}) $ & $ 7767410995 + O(67^{6}) $ & $
38320151289262 + O(67^{8}) $\\
$ 490864897182147 + O(71^{8}) $ & $ 16754252742 + O(71^{6}) $ &
$ 530974572239623 + O(71^{8}) $\\
$ 689452389265311 + O(73^{8}) $ & $ 193236387 + O(73^{5}) $ & $
162807895476311 + O(73^{8}) $\\
$ 878760549863821 + O(79^{8}) $ & $ 1745712500 + O(79^{5}) $ & $
1063642669147985 + O(79^{8}) $\\
$ 2070648686579466 + O(83^{8}) $ & $ 2888081539 + O(83^{5}) $ &
$ 1103760059074178 + O(83^{8}) $\\
$ 3431343284115672 + O(89^{8}) $ & $ 1591745960 + O(89^{5}) $ &
$ 1012791564080640 + O(89^{8}) $\\
$ 4259144286293285 + O(97^{8}) $ & $ 21828881 + O(97^{4}) $ & $
6376229493766338 + O(97^{8}) $\\
\hline
 \end{tabular}
\end{center}

\clearpage
   \subsection{\textbf{$N=191$}} We have the following data:\\

\begin{center}
    \begin{tabular}{| l | l | l |}
    \hline
$p$-adic regulator $\Reg_p(A/\Q)$ & $p$-adic $L$-value & $p$-adic multiplier $\epsilon_p(A)$ \\
\hline
$ 4195478 + O(7^{8}) $ & $ 1867 + O(7^{4}) $ & $ 1638463 +
O(7^{8}) $\\
$ 43495803539 + O(23^{8}) $ & $ 62365909362 + O(23^{8}) $ & $
47598354917 + O(23^{8}) $\\
$ 276478270993 + O(31^{8}) $ & $ 411081898951 + O(31^{8}) $ & $
611200443823 + O(31^{8}) $\\
$ 7847912037610 + O(43^{8}) $ & $ 1839263047933 + O(43^{8}) $ &
$ 10085036614653 + O(43^{8}) $\\
$ 3701160666066 + O(47^{8}) $ & $ 16594732090932 + O(47^{8}) $ &
$ 9836262988784 + O(47^{8}) $\\
$ 19837992635361 + O(53^{8}) $ & $ 121641372 + O(53^{5}) $ & $
22289116823061 + O(53^{8}) $\\
$ 207820830309704 + O(71^{8}) $ & $ 80098460638243 + O(71^{8}) $
& $ 318837731560077 + O(71^{8}) $\\
$ 105659818394179 + O(73^{8}) $ & $ 278456920 + O(73^{5}) $ & $
160255667550084 + O(73^{8}) $\\
$ 4330286071100495 + O(97^{8}) $ & $ 12214648 + O(97^{4}) $ & $
1683523428082670 + O(97^{8}) $\\
\hline
 \end{tabular}
\end{center}
 $\qquad$\\

\section{Evidence for a twist of rank 4}\label{sec:r4twist}
    In this section, we present evidence for Conjecture~\ref{pbsd} on a rank 4 twist of a
rank 0 modular abelian surface for the primes~29, 61 and~79.
    Let $X=X_0(31)$.
    According to \cite{galbraith}, an affine equation for $X$ is given by
   \[
        y^2 = (x^3-2x^2-x+3)\cdot (x^3-6 x^2-5 x-1).
    \]
    The Jacobian $A$ of $X$ has rank zero over $\Q$.

    We search for quadratic twists of high rank by searching among quadratic twists
    $\psi$ of small conductor $D$ for some $A_\psi$ whose complex $L$-series seems to
    vanish at $s=1$ up to order at least~4.
    This is the case for $D=-47$.
    We then use a 2-descent on $A_{\psi}$ as implemented in {\tt Magma} to find that the
    rank is at most~4.
    Searching for $\Q$-rational points on $A_{\psi}$ of small height quickly reveals
    subgroups of rank~4, such as the groups $G_p$ described below, thus proving that the
rank $A_{\psi}$ over $\Q$ is indeed~4.

    Using \cite[Theorem~1.17]{bosch-liu:rat_pts} we find that the potentially nontrivial Tamagawa numbers are
    $c_{31}(A_\psi)=1$ and $c_{47}(A_\psi)=16$.
    Moreover, the torsion subgroup is trivial as is the 2-torsion of
    $\Sha(A_{\psi}/\Q)$.
Since the divisors supported in $\Q$-rational points of $X_{\psi}$ do not
    generate a subgroup of finite index, we compute the $p$-adic regulator for $p\in
    \{29,61,79\}$ using
    the second method outlined in Section~\ref{sec:comp_reg}.
    Namely, for each $p$ we find a finite index subgroup $G_p$ of
      $A_{\psi}(\Q)/A_{\psi}(\Q)_{\textrm{tors}}$ generated by the classes of divisors $D_1,\ldots,
      D_4$ such that each $D_i$ has pointwise $\Q_p$-rational support.
      It is then enough to compute the $p$-adic regulator of $G_p$ to find the $p$-adic
      regulator of $A_{\psi}(\Q)/A_{\psi}(\Q)_{\textrm{tors}}$.

      The generators we used are given in Table~\ref{table:gens_indices} in Mumford
      representation, along with the index of $G_p$.
    \begin{table}[h!]
      \begin{center}
        \begin{tabularx}{\textwidth}{ |r|l|X| }
      \hline
      $p$ & index & generators of $G_p$\\
      \hline
      29 & 2 & $
            [x^2 - 7/2x + 49/16,
            581/16x - 1305/32],
            [x^2 - 2x - 1/2,
          -47/2x],$

    $ [x^2 - 5/3x + 5/6,
            47/18x - 517/18],
            [x^2 - 19/3x - 11/4,
            -517/36x - 47/12]
    $\\
  61 & 4 &$[
        x^2 - 7/2x + 49/16,
        581/16x - 1305/32
    ],
    [
        x^2 - 5x + 11/2,
        -235/2x + 423/2
    ],$

$ [
        x^2 - 5x - 7/3,
        235/3x + 94/3
    ],
    [
        x^2 - 5/3x + 5/6,
        -47/18x + 517/18
    ]
 $ \\
  79 & 4 &$[
        x^2 - 7/2x + 49/16,
        581/16x - 1305/32
    ],
    [
        x^2 - x - 1/3,
        -47/3x
    ],$

    $[
        x^2 - 3x - 5/3,
        149/3x + 82/3
    ],
    [
        x^2 - 19/7x + 3/7,
        -1363/49x - 564/49
    ]
 $ \\
  \hline
      \end{tabularx}
    \end{center}
    \caption{$\;\;$Indices and generators, $N=31$ twisted by $D =
-47$}\label{table:gens_indices}
  \end{table}

In order to compute the special values of the $p$-adic $L$-series, we need
to find out the correct normalization factor $\delta^-$ of the minus modular symbol map
associated to $A$.
The twist $A_\chi$ of $A$ by the quadratic character $\chi$ associated to $\Q(\sqrt{-19})$ has rank~0 over $\Q$.
Comparing $[0]^+_{A_\psi}=-4$ to
$    \frac{L(A_\chi,1)}{\Omega^+_{A_\chi}}=1,$
we find that $\delta^-=-\frac{1}{4}$.

For the latter computation, we used that for a minimal equation of $X_\chi$, a basis of the integral
1-forms on $A_\chi$ is given by $(\omega_1,\omega_2)$ as in Remark~\ref{rk:find_eta}.
Since the corresponding fact also holds for $A$, we find that $\eta_\chi=1$ using
\cite{liu:conducteur}.

The data presented in this section proves:
\begin{proposition}\label{prop:r4twist}
Assume that  $\Sha(A_\psi/\Q)$ consists entirely of 2-torsion,
and that the conjectural order of $\Sha(A_{\psi}/\Q)$ is $1$ (numerically it is $1.0000000\ldots$ to as many digits
as we care to compute).
Then Conjecture~\ref{pbsd}
is satisfied up to the respective precision specified in
the table below
for the twist $A_\psi$ of $J_0(31)$ of rank~4
for the primes~29,~61 and~79.
\end{proposition}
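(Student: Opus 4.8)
The plan is to verify both assertions of Conjecture~\ref{pbsd} for $A_\psi$ at each of $p\in\{29,61,79\}$ by computing the two sides of~\eqref{eq-pbsd} to high $p$-adic precision, checking that they agree, and separately confirming that the order of vanishing of $\cL_p(A_\psi,T)$ at $T=0$ is exactly the Mordell--Weil rank $r=4$. First I would assemble the right-hand side. Under the stated hypotheses the Shafarevich--Tate group is trivial, so $|\Sha(A_\psi/\Q)|=1$; the torsion subgroups of $A_\psi(\Q)$ and $A_\psi^\vee(\Q)$ are trivial; and the Tamagawa product is $\prod_v c_v=c_{31}(A_\psi)\cdot c_{47}(A_\psi)=1\cdot16=16$. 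Hence the right-hand side collapses to $16\cdot\epsilon_p(A_\psi)\cdot\Reg_\gamma(A_\psi)$, and the only genuinely $p$-adic inputs are the multiplier $\epsilon_p(A_\psi)$ (obtained from the unit roots of the Hecke polynomials as in Section~2) and the normalized regulator $\Reg_\gamma(A_\psi)$.

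Next I would compute the regulator. Since divisors supported on $\Q$-rational points do not generate a finite-index subgroup of $A_\psi(\Q)/A_\psi(\Q)_{\tors}$, I use the finite-index subgroups $G_p$ of Table~\ref{table:gens_indices}, whose four generators are represented by divisors with pointwise $\Q_p$-rational support and therefore satisfy the hypotheses of Algorithm~\ref{algo:height22}. Running that algorithm on each pair of generators produces the $4\times4$ matrix of height pairings, whose determinant is $\Reg_p(G_p)$. Because the $p$-adic height is bilinear, $\Reg_p(G_p)=[\,A_\psi(\Q)/A_\psi(\Q)_{\tors}:G_p\,]^2\cdot\Reg_p(A_\psi)$, so dividing by the square of the recorded index ($2,4,4$ respectively) recovers $\Reg_p(A_\psi)$, and dividing by $(\log_p(\kappa(\gamma)))^4$ yields $\Reg_\gamma(A_\psi)$, exactly as in Section~\ref{sec:comp_reg}.

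For the left-hand side I would compute the leading coefficient $\cL_p^*(A_\psi,0)$ using the overconvergent modular symbols method of Section~\ref{sec:OMS}. Since $D=-47<0$ the relevant normalization is that of the minus modular symbol of $A$, and I would use the factor $\delta^-=-\tfrac14$ determined from the rank-zero twist $A_\chi$ by $\Q(\sqrt{-19})$ by comparing its computed modular symbol value with the algebraic ratio $L(A_\chi,1)/\Omega^+_{A_\chi}$. After $p$-stabilizing and lifting to an overconvergent eigensymbol, I extract the power series $\cL_p(A_\psi,T)$, verify numerically that its first four Taylor coefficients vanish while the fifth does not (so that $\ord_T\cL_p(A_\psi,T)=4=r$), read off $\cL_p^*(A_\psi,0)$, and compare it with $16\cdot\epsilon_p(A_\psi)\cdot\Reg_\gamma(A_\psi)$ to the common precision.

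The main obstacle I expect is precision control. The rank-$4$ regulator is the determinant of a $4\times4$ matrix of Coleman-integral-based heights, and both that determinant and the extraction of an order-$4$ leading coefficient of $\cL_p(A_\psi,T)$ incur substantial loss of $p$-adic accuracy; guaranteeing that enough significant digits survive on both sides for the agreement to be meaningful, and that the order of vanishing is demonstrably exactly $4$ rather than merely bounded, is the delicate part. By contrast, the arithmetic inputs $|\Sha|$, $\prod_v c_v$, and the torsion orders are comparatively routine, being either recorded in the tables or forced by the stated hypotheses.
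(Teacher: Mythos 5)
Your proposal follows essentially the same route as the paper: assemble the right-hand side of~\eqref{eq-pbsd} from $|\Sha(A_\psi/\Q)|=1$, trivial torsion, and $c_{31}\cdot c_{47}=16$; compute $\Reg_\gamma(A_\psi)$ via Algorithm~\ref{algo:height22} applied to the finite-index subgroups $G_p$ of Table~\ref{table:gens_indices} (correcting by the square of the index and by $(\log_p(\kappa(\gamma)))^4$); and compute $\cL_p^*(A_\psi,0)$ by overconvergent modular symbols with the normalization $\delta^-=-\tfrac14$ obtained from the rank-zero twist $A_\chi$ by $\Q(\sqrt{-19})$. This is exactly the verification the paper carries out (with agreement holding only up to the computed $p$-adic precision, a caveat you rightly flag), so the proposal is correct and matches the paper's proof.
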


The special values of the $p$-adic $L$-series, the $p$-adic regulators and the
$p$-adic multipliers for $p\in\{29,61,79\}$ are given in
 the following table:\\

\begin{center}
\begin{tabular}{| l | l | l |}
    \hline
$p$-adic regulator $\Reg_p(A_\psi/\Q)$ & $p$-adic $L$-value & $p$-adic multiplier
    $\epsilon_p(A_\psi)$ \\
\hline
$351486231941615978+O(29^{12})$  & $202402009906+ O(29^8) $ &$ 423952915488+O(29^8)$\\
$1650697608489237057465+O(61^{12})$  & $4326648666405+ O(61^8) $ &$10267186717780 +O(61^8)$\\
    $8155329946924028539010+O(79^{12})$  & $1513185184992411+ O(79^8) $ &$1431106352547896
+O(79^8)$\\
\hline
 \end{tabular}
\end{center}
 $\qquad$\\
 \begin{remark}
Assume that  $\Sha(A_\chi/\Q)$ consists entirely of 2-torsion.
Then we also verified the classical (and hence for good ordinary primes $p$ the $p$-adic)
conjecture of Birch and
Swinnerton-Dyer for the rank~0 twist $A_\chi$,
since all Tamagawa numbers, the order of the torsion subgroup and the order of
$\Sha(A_\chi/\Q)[2]$ are easily seen to be equal to~1.
\end{remark}

\bibliography{biblio}
\bibliographystyle{plain}
\end{document}